\newtheorem{theorem}{Theorem}
\newtheorem{lemma}[theorem]{Lemma}
\newtheorem{proposition}[theorem]{Proposition}
\newtheorem{condition}[theorem]{Condition}
\numberwithin{theorem}{section}
\numberwithin{equation}{section}
\def\N {{\mathbb N}}
\def\Z {{\mathbb Z}}
\def\R {{\mathbb R}}
\DeclareMathOperator{\diff}{d\!}
\DeclareMathOperator{\Id}{Id}
\DeclareMathOperator{\loc}{loc}
\DeclarePairedDelimiter\abs{\lvert}{\rvert}
\DeclarePairedDelimiter\norm{\lVert}{\rVert}
\title{Duality for double iterated outer $L^p$ spaces}
\author{Marco Fraccaroli}
\address{Mathematisches Institut, Universit\"at Bonn, Endenicher Allee 60, 53115 Bonn, Germany}
\email{mfraccar@math.uni-bonn.de}
\subjclass[2010]{42B35 (Primary), 46E30 (Secondary)}
\keywords{outer $L^p$ spaces, iterated $L^p$ spaces, product $L^p$ spaces, K\"{o}the duality, outer measures.}
\begin{document}
	
\date{\today}
	
\begin{abstract}	
	We study the double iterated outer $L^p$ spaces, namely the outer $L^p$ spaces associated with three exponents and defined on sets endowed with a measure and two outer measures. We prove that in the case of finite sets, under certain conditions between the outer measures, the double iterated outer $L^p$ spaces are isomorphic to Banach spaces uniformly in the cardinality of the set. We achieve this by showing the expected duality properties between them. We also provide counterexamples demonstrating that the uniformity does not hold in any arbitrary setting on finite sets, at least in a certain range of exponents. We prove the isomorphism to Banach spaces and the duality properties between the double iterated outer $L^p$ spaces also in the upper half $3$-space infinite setting described by Uraltsev, going beyond the case of finite sets.	
\end{abstract}
	
\maketitle

\section{Introduction}

The theory of $L^p$ spaces for outer measures, or outer $L^p$ spaces, was introduced by Do and Thiele in \cite{MR3312633} in the context of time-frequency analysis. It provides a framework to encode the boundedness of linear and multilinear operators satisfying certain symmetries in a two-step programme. The programme consists of a version of H\"{o}lder's inequality for outer $L^p$ spaces together with the boundedness of certain embedding maps between classical and outer $L^p$ spaces associated with wave packet decompositions. This scheme of proof turns out to be applicable not only in time-frequency analysis, see for example \cite{MR4115625},\cite{MR4163524},\cite{2020arXiv200302742A},\cite{MR3873113},\cite{MR3829751},\cite{MR3596720},\cite{2016arXiv161007657U},\cite{Uraltsev},\cite{Warchalski}, but in other contexts too, see for example \cite{MR3841536},\cite{MR3875242},\cite{MR3312633},\cite{2020arXiv200105903F},\cite{mirek2017local},\cite{MR3406523}.

Although the theory of outer $L^p$ spaces comes in a broad generality of settings, the outer $L^p$ spaces used in \cite{MR3312633} are specifically defined by quasi-norms reminiscent in nature of iterated Lebesgue norms. In particular, the two Lebesgue norms involved in the definition of outer $L^p$ quasi-norms are associated with the two structures on a set provided by a measure and an outer measure. We recall that an outer measure $\mu$ on a set $X$ is a monotone, subadditive function from $\mathcal{P}(X)$, the power set of $X$, to the extended positive half line, attaining the value $0$ on the empty set. Similarly, in \cite{2016arXiv161007657U} Uraltsev considered outer $L^p$ spaces associated with three structures on a set, namely a measure and two outer measures, once again in the context of time-frequency analysis and in the spirit of the aforementioned two-step programme. Outer $L^p$ spaces associated with three structures where used in \cite{MR4115625},\cite{MR4163524},\cite{2020arXiv200302742A},\cite{MR3829751},\cite{2016arXiv161007657U},\cite{Uraltsev},\cite{Warchalski}.

As a matter of fact, one can define outer $L^p$ spaces associated with arbitrary $(n+1)$ structures on a set, namely a measure and $n$ outer measures. We refer to these spaces as \emph{iterated outer $L^p$ spaces}, and we provide a definition in detail. We start recalling the classical product of $L^p$ spaces on a set with a Cartesian product structure. Given a collection of couples of finite sets with strictly positive weights $(X_i,\omega_i)$, we define recursively the product $L^p$ quasi-norms for functions on their Cartesian product as follows. For any $n \in \N$, let
\begin{equation*}
Y^n = \prod_{i=1}^n X_i,
\end{equation*}
where, for $n=0$, the empty Cartesian product is intended to be $\{ \varnothing \}$. Note that, for any $x \in X_n$, a function $f$ on $Y^n$ defines a function $f(\cdot,x)$ on $Y^{n-1}$. Given a collection of exponents $p_i \in (0,\infty]$, we define the classical product $ \mathbb{L}_n $ quasi-norm of a function $f$ on $Y^n$, where
\begin{equation*}
\mathbb{L}_n = L^{p_n}_{\omega_n}(L^{p_{n-1}}_{\omega_{n-1}}(\dots L^{p_1}_{\omega_1})),
\end{equation*}
by the recursion
\begin{align} \label{eq:classical_recursion_base}
& \norm{f(x)}_{\mathbb{L}_0} = \abs{f(x)}, \\
\label{eq:classical_recursion}
& \norm{f}_{\mathbb{L}_n} = \norm{ \norm{f (\cdot,x) }_{\mathbb{L}_{n-1}} }_{L^{p_n}(X_n,\omega_n)}.
\end{align}

The theory of outer $L^p$ spaces allows for a generalization of this definition to settings where the underlying set has no Cartesian product structure. For the purpose of this paper, we provide the definition of the iterated outer $L^p$ quasi-norms in the form of a recursion analogous to that in \eqref{eq:classical_recursion_base}, \eqref{eq:classical_recursion}. 

Let $X$ be a finite set together with a collection of outer measures $\mu_i$ on it. To avoid cumbersome details, we make the harmless assumption that every $\mu_i$ is finite and strictly positive on every nonempty element of $\mathcal{P}(X)$. In fact, it is reasonable that subsets of $X$ on which either of the outer measures is $0$ or $\infty$ should contribute only trivially to the iterated outer $L^p$ spaces on $X$, and we ignore them altogether. Throughout the paper, we avoid recalling this assumption, but the reader should always consider it implicitly stated whenever we refer to outer measures.

Given a collection of exponents $p_i \in (0,\infty]$, we define the iterated outer $\mathbf{L}_n$ quasi-norm of a function $f$ on $X$, where
\begin{equation*}
\mathbf{L}_n = L^{p_n}_{\mu_n}(\ell^{p_{n-1}}_{\mu_{n-1}}(\dots \ell^{p_1}_{\mu_1})),
\end{equation*}
by the recursion
\begin{align} \label{eq:outer_recursion_base}
& \norm{f}_{\mathbf{L}_0} = \sup_{x \in X} \abs{f(x)}, \\
\label{eq:outer_recursion_size}
& \mathbf{I}_n(f) = \sup_{\varnothing \neq A \subseteq X} \mu_{n}(A)^{-(p_{n-1})^{-1}} \norm{f 1_A}_{\mathbf{L}_{n-1}}, \\
\label{eq:outer_recursion}
& \norm{f}_{\mathbf{L}_n} = 
\begin{dcases}
\mathbf{I}_n(f), \qquad & \textrm{if $p_n=\infty$,} \\ 
\Big( \int_{0}^{\infty} p_n \lambda^{p_n} \inf \{ \mu_n(B) \colon \mathbf{I}_n(f 1_{B^c}) \leq \lambda \} \frac{\diff \lambda}{\lambda} \Big)^{\frac{1}{p_n}}, 
\qquad & \textrm{if $p_n \neq \infty$,}
\end{dcases}
\end{align}
where $p_0 = \infty$, and the exponent $\infty^{-1}$ is intended to be $0$. We refer to the space defined by the quantity in \eqref{eq:outer_recursion} as the \emph{iterated outer $L^p$ space} $\mathbf{L}_n$ or $L^{p_n}_{\mu_n}(\ell^{p_{n-1}}_{\mu_{n-1}}(\dots \ell^{p_1}_{\mu_1}))$, where we denote the argument of the supremum in \eqref{eq:outer_recursion_size} as
\begin{equation} \label{eq:size}
\ell^{p_{n-1}}_{\mu_{n-1}}(\dots \ell^{p_1}_{\mu_1}) (f)(A) =  \mu_{n}(A)^{-(p_{n-1})^{-1}} \norm{f 1_A}_{\mathbf{L}_{n-1}},
\end{equation} 
and the infimum in \eqref{eq:outer_recursion} as
\begin{equation} \label{eq:super_level_measure}
\mu_{n}( \ell^{p_{n-1}}_{\mu_{n-1}}(\dots \ell^{p_1}_{\mu_1}) (f) > \lambda ) = \inf \{ \mu_n(B) \colon \mathbf{I}_n(f 1_{B^c}) \leq \lambda \}.
\end{equation} 
In the language of the $L^p$ theory for outer measure spaces, the quantity in \eqref{eq:size} defines a \emph{size}, and that in \eqref{eq:super_level_measure} defines the \emph{super level measure} of a function $f$ at level $\lambda$ with respect to the size. 

If the outer measure $\mu_1$ is a measure $\omega$, then we have, for every $ p_1 \in (0,\infty]$,
\begin{equation*}
\norm{f}_{\mathbf{L}_{1}} = \norm{f}_{L^{p_1}(X, \omega)},
\end{equation*}
hence we can begin the recursion in \eqref{eq:outer_recursion_base}, \eqref{eq:outer_recursion_size}, \eqref{eq:outer_recursion} from $\mathbf{L}_{1}$. In fact, already the general case has this form. The quasi-norm defined by the collections of outer measures $\mu_i$ and exponents $p_i$ is the same one defined by the collections of outer measures $\widetilde{\mu}_{i}$ and exponents $\widetilde{p}_i$, where $\widetilde{\mu}_1$ is the counting measure, $\widetilde{p}_1 = \infty$, and $\widetilde{\mu}_{i+1} = \mu_{i}, \widetilde{p}_{i+1} = p_{i}$ for every $i \in \N$. Therefore, without loss of generality, we always assume that $\mu_1$ is a measure $\omega$ associated with a finite and strictly positive weight that we denote by $\omega$ as well, with a slight abuse of notation. As before, throughout the paper, we avoid recalling this assumption, but the reader should always consider it implicitly stated whenever we refer to measures.

The classical product $\mathbb{L}_n$ quasi-norms defined in \eqref{eq:classical_recursion} are a special case of the iterated outer $\mathbf{L}_n$ ones defined in \eqref{eq:outer_recursion}, with the same collection of exponents and the following collection of outer measures $\mu_i$. For any $1 \leq j \leq n$, we define
\begin{equation*}
Y^n_j= \prod_{i=j}^n X_i,
\end{equation*}
and we observe that the set $Y^n$ has a canonical partition $\mathcal{Z}_j$, namely
\begin{equation*}
\mathcal{Z}_j = \{  Y_1^{j-1} \times z \colon z \in Y^n_j \}.
\end{equation*}
where the set $Y_1^{0} \times z$ is intended to be the singleton $\{z\}$. For every $A \subseteq Y^n$, let
\begin{equation} \label{eq:cartesian_product_outer_measure}
\mu_i(A) = \inf_{Z} \{ \sum_{z \in Z} \prod_{j=i}^n \omega_j \big(\pi_j(z) \big) \},
\end{equation}
where $\pi_j$ is the projection in the coordinate in $X_j$, and the infimum is taken over all subsets $Z$ of $Y^n_{i}$ such that $A$ is covered by the elements of $\mathcal{Z}_i$ associated with $Z$.

The theory of classical product of $L^p$ spaces is well-developed, see for example \cite{MR126155}. In the range of exponents $p_i \in [1, \infty]$, the quantities defined in \eqref{eq:classical_recursion} are norms, and they satisfy the expected duality properties. On the other hand, the theory of outer $L^p$ spaces is a theory of quasi-norms, mainly developed in \cite{MR3312633} towards their real interpolation features like Radon-Nikodym results, H\"{o}lder's inequality and Marcinkiewicz interpolation, due to the aforementioned two-step programme.
 
However, as showed in \cite{MR3312633}, the iterated outer $L^p$ spaces satisfy some properties analogous to those of the iterated classical ones. In particular, a one-direction "collapsing effect" and a version of H\"older's inequality up to a uniform constant, namely
\begin{gather}
\label{eq:collapsing}
\norm{f}_{L^1(X, \omega)} \leq C \norm{f}_{L^1_{\mu_n} ( \ell^1_{\mu_{n-1}}( \dots \ell^1_\omega)) }, \\
\label{eq:outer_Holder}
\sup_{g} \{  \norm{fg}_{L^1_{\mu_n} ( \dots \ell^1_\omega) } \colon \norm{g}_{L^{p'_n}_{\mu_n} ( \dots \ell^{p'_1}_\omega)} \leq 1 \} \leq C \norm{f}_{L^{p_n}_{\mu_n} ( \dots \ell^{p_1}_\omega)},
\end{gather}
where, for every $1 \leq i \leq n$, the exponent $p_i'$ is the H\"older conjugate of $p_i$, satisfying 
\begin{equation*}
\frac{1}{p_i} + \frac{1}{p_i'} =1.
\end{equation*}

In \cite{2020arXiv200105903F}, we studied the opposite inequalities in \eqref{eq:collapsing} and in \eqref{eq:outer_Holder} in the single iterated case, namely when $n=2$. We proved the equivalence in both cases up to constants depending on $p_i \in (1,\infty)$ but uniform in the cardinality of $X$, as long as it is finite. These in turn imply the equivalence of the outer $L^{p_2}_\mu(\ell^{p_1}_{\omega})$ quasi-norms to the norms defined by the supremum in \eqref{eq:outer_Holder}. The endpoint cases $p_1 = \infty$ and $p_2 = 1$ exhibit a different behaviour, and we refer to \cite{2020arXiv200105903F} for more details.

In the present paper, we focus on the analogous opposite inequalities in \eqref{eq:collapsing} and in \eqref{eq:outer_Holder} in the double iterated case, namely when $n=3$. Already in this case, the study of the opposite inequalities becomes substantially more difficult due to the interplay between the subadditivity of the two outer measures and the exponents. We start recalling the setting. Let $X$ be a finite set, $\mu,\nu$ outer measures, and $\omega$ a measure. Given three exponents $ p,q,r \in (0,\infty]$, we define the double iterated outer $L^p$ space $L^p_\mu(\ell^q_\nu(\ell^r_\omega))$ through the quasi-norm in \eqref{eq:outer_recursion}, with $\mu_1=\omega$, $\mu_2 = \nu$, $\mu_3 = \mu$, and $p_1=r$, $p_2 = q$, $p_3 = p$.

Before stating our main results, we introduce some auxiliary definitions. They depend on parameters $\Phi,K \geq 1$ that we are going to avoid recalling every time.

Given a subset $A$ of $X$, we say that a subset $B$ of $X$ is a \emph{$\mu$-parent set of $A$ (with parameter $\Phi$)} if $A \subseteq B$ and we have
\begin{equation}
\label{eq:parent_optimality}
\mu(B) \leq \Phi \mu(A).
\end{equation}
A \emph{$\mu$-parent function $\mathbf{B}$ (with parameter $\Phi$)} is then a monotone function from $\mathcal{P}(X)$ to itself, associating every subset $A$ of $X$ with a $\mu$-parent set (with parameter $\Phi$) $\mathbf{B}(A)$. 

Moreover, given a collection $\mathcal{E}$ of subsets of $X$, we say that a function $\mathcal{C}$ from $\mathcal{P}(X)$ to the set of subcollections of pairwise disjoint elements in $\mathcal{E}$ is a \emph{$\mu$-covering function (with parameter $\Phi$)} if the function $\mathbf{B}_{\mathcal{C}}$ from $\mathcal{P}(X)$ to itself defined by
\begin{equation*}
\mathbf{B}_{\mathcal{C}} (A) = \bigcup_{E \in \mathcal{C}(A)} E,
\end{equation*}
is a $\mu$-parent function (with parameter $\Phi$). 

Next, we say that a collection $\mathcal{A}$ of pairwise disjoint subsets of $X$ is \emph{$\nu$-Carath\'{e}odory (with parameter $K$)} if, for every subset $U$ of $X$, we have
\begin{equation} \label{eq:K_Carath\'{e}odory}
\sum_{A \in \mathcal{A}} \nu ( U \cap A ) \leq K \nu \big( U \cap \bigcup_{A \in \mathcal{A}} A \big).
\end{equation}

Finally, we define two conditions for the quadruple $(X,\mu,\nu,\mathcal{C})$.

\begin{condition} [Canopy] \label{def:parent_collection_nesting}
	We say that $(X,\mu,\nu,\mathcal{C})$ satisfies the \emph{canopy condition (with parameters $\Phi,K$)} if $\mathcal{C}$ is a $\mu$-covering function (with parameter $\Phi$), and for every $\nu$-Carath\'{e}odory collection (with parameter $K$) $\mathcal{A}$, for every subset $D$ of $X$ disjoint from $\mathbf{B}_{\mathcal{C}} \big(\bigcup_{A \in \mathcal{A}} A\big)$, the collection $\mathcal{A} \cup \{ D \}$ is still $\nu$-Carath\'{e}odory (with parameter $K$).
\end{condition}

\begin{condition} [Crop] \label{def:anti_stacking_compatibility}
	We say that $(X,\mu,\nu,\mathcal{C})$ satisfies the \emph{crop condition (with parameters $\Phi,K$)} if $\mathcal{C}$ is a $\mu$-covering function (with parameter $\Phi$), and for every collection $\mathcal{A}$ in $\mathcal{E}$, there exists a $\nu$-Carath\'{e}odory subcollection (with parameter $K$) $\mathcal{D}$ of $\mathcal{A}$ such that, for every subset $F$ of $X$ disjoint from $\bigcup_{D \in \mathcal{D}} D$, we have
	\begin{equation*}
	\mathbf{B}_{\mathcal{C}}(F) = \mathbf{B}_{\widetilde{\mathcal{C}}}(F),
	\end{equation*}
	where
	\begin{equation*}
	\widetilde{\mathcal{C}}(F) = \mathcal{C}(F) \setminus \mathcal{A}.
	\end{equation*}
\end{condition}

We are now ready to state our main results. 
\begin{theorem} \label{thm:collapsing_2_step_iteration_finite}
	For every $q,r \in (0,\infty]$, $\Phi,K \geq 1$, there exist constants $C_1=C_1(q,r,\Phi,K) , C_2=C_2(q,r,\Phi,K)$ such that the following property holds true. 
	
	Let $X$ be a finite set, $\mu,\nu$ outer measures, $\omega$ a measure, and $\mathcal{C}$ a $\mu$-covering function such that $(X,\mu,\nu,\mathcal{C})$ satisfies the canopy condition \ref{def:parent_collection_nesting}. Then, for every function $f \in L^q_\mu(\ell^q_\nu(\ell^r_\omega))$ on $X$, we have
		\begin{equation} \label{eq:collapsing_2_iterated}
		C_1^{-1} \norm{f}_{L^q_\nu(\ell^r_\omega)} \leq \norm{f}_{L^q_\mu(\ell^q_\nu(\ell^r_\omega))} \leq C_2 \norm{f}_{L^q_\nu(\ell^r_\omega)}.
		\end{equation}
		
		If $q \leq r$ or $q=\infty$, the constant $C_1$ does not depend on $\Phi,K$. 
		
		If $q \geq r$, the constant $C_2$ does not depend on $\Phi,K$. 
\end{theorem}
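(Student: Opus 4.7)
The plan is to prove both inequalities in \eqref{eq:collapsing_2_iterated} via a universal greedy construction on subsets of $X$, coupled with direct comparisons that handle the ``free'' endpoints where the theorem asserts constants independent of $\Phi,K$. The greedy produces a single $\nu$-Carath\'eodory collection whose decreasing size sequence encodes the super-level structure of $s(f) := \ell^q_\nu(\ell^r_\omega)(f)$; combined with a double layer-cake Fubini computation, this converts Condition \ref{def:parent_collection_nesting} into the required strong-type bound.

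\textbf{Upper bound.} Starting from $\mathcal{A} = \varnothing$, iteratively adjoin a subset $A \subseteq X$ disjoint from $\mathbf{B}_{\mathcal{C}}(\bigcup \mathcal{A})$ that maximizes $s(f)(A) = \mu(A)^{-1/q} \|f 1_A\|_{L^q_\nu(\ell^r_\omega)}$. By finiteness of $X$ the procedure terminates with a list $A_1, \dots, A_N$ of nonincreasing sizes $\tau_i := s(f)(A_i)$; propagating Condition \ref{def:parent_collection_nesting} inductively, $\{A_i\}$ is $\nu$-Carath\'eodory with parameter $K$. For any $\lambda > 0$ the prefix $\mathcal{A}_\lambda = \{A_i : \tau_i > \lambda\}$ yields $B_\lambda := \mathbf{B}_{\mathcal{C}}(\bigcup \mathcal{A}_\lambda)$ with $s(f 1_{B_\lambda^c})(A) \leq \lambda$ identically (the next greedy pick would otherwise exceed $\lambda$ and belong to $\mathcal{A}_\lambda$), so that
\begin{equation*}
\mu(s(f) > \lambda) \leq \mu(B_\lambda) \leq \Phi \sum_{i : \tau_i > \lambda} \mu(A_i).
\end{equation*}
Fubini in the outer layer-cake transforms $\|f\|^q_{L^q_\mu(\ell^q_\nu(\ell^r_\omega))}$ into $\Phi \sum_i \tau_i^q \mu(A_i) = \Phi \sum_i \|f 1_{A_i}\|^q_{L^q_\nu(\ell^r_\omega)}$, and a second Fubini in the inner layer-cake combined with the inequality $\sum_i \nu(\sigma(f 1_{A_i}) > \tau) \leq K \nu(\sigma(f) > \tau)$ --- which follows by taking the optimal $\nu$-exceptional set for $\sigma(f)$ at level $\tau$ as a common witness for each $A_i$ and invoking the $\nu$-Carath\'eodory inequality --- closes the estimate to $\Phi K \|f\|^q_{L^q_\nu(\ell^r_\omega)}$, giving $C_2 \leq (\Phi K)^{1/q}$.

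\textbf{Lower bound and free cases.} The lower bound is treated by a dual greedy driven by the inner size $\sigma(f)$ in place of $s(f)$, again invoking Condition \ref{def:parent_collection_nesting} to keep the extracted collection Carath\'eodory and to transfer a $\nu$-layer estimate into a $\mu$-layer estimate at a comparable level; this gives $C_1$ depending on $\Phi, K$. The free endpoints are handled directly: at $q = \infty$ the identity $\|f\|_{L^\infty_\mu(\ell^\infty_\nu(\ell^r_\omega))} = \|f\|_{L^\infty_\nu(\ell^r_\omega)}$ unfolds from the definitions, while for $q \leq r$ (respectively $q \geq r$) a Jensen- or H\"older-type comparison between the two super-level measures, based on the ordering of $L^p$ quasi-norms in the exponent, dispenses with $\mathcal{C}$ and produces absolute constants.

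\textbf{Main obstacle.} The most delicate point is the dual greedy in the lower bound, since Condition \ref{def:parent_collection_nesting} is asymmetric: it preserves $\nu$-Carath\'eodory control under augmentation disjoint from a $\mu$-parent, fitting naturally with the upper-bound direction. Running the argument with the roles of $s$ and $\sigma$ swapped, so that the single hypothesis \ref{def:parent_collection_nesting} still suffices, is the most demanding piece of bookkeeping. The universal ``sort-by-size'' ordering of the greedy is what upgrades a single-level weak-type inequality $\mu(s(f) > \lambda) \lesssim \lambda^{-q} \|f\|^q_{L^q_\nu(\ell^r_\omega)}$ --- which on its own diverges when integrated against $q \lambda^{q-1} \diff \lambda$ --- into the strong-type bound via the Fubini telescoping, and this same device must be mirrored in the dual argument.
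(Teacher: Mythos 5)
Your upper bound is essentially the paper's argument, repackaged: the sorted greedy (pick a maximizer of $s(f)$ disjoint from the $\mu$-parent of what has been selected so far) is the same selection the paper performs at dyadic levels in Proposition \ref{thm:atomic_decomposition_exterior_level_1_no_partition}, the canopy condition is used in the same way to keep the selected family $\nu$-Carath\'eodory, and your ``common witness'' inequality $\sum_i \nu(\ell^r_\omega(f1_{A_i})>\tau)\leq K\,\nu(\ell^r_\omega(f)>\tau)$ is exactly the second inequality of Lemma \ref{thm:L_q_orthogonality}. That half is sound and gives $C_2\leq(\Phi K)^{1/q}$.

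The rest has genuine gaps. First, the lower bound is not proved: a ``dual greedy driven by the inner size $\sigma(f)=\ell^r_\omega(f)$'' does not mesh with the canopy condition, which couples $\nu$-Carath\'eodory collections to $\mu$-parent sets, not to $\nu$- or $\omega$-level structure; you flag this yourself as unresolved bookkeeping, and as stated it is the wrong mechanism. The paper instead keeps the \emph{same} $\mu$-driven greedy but takes iterated parents $F_k=\mathbf{B}_{\mathcal{C}}(\mathbf{B}_{\mathcal{C}}(F_{k+1}\cup E_k))$ (Proposition \ref{thm:atomic_decomposition_exterior_level_2}), so that the layers $\widetilde{E}^1_k,\widetilde{E}^2_k$ partition the support of $f$, are each $\nu$-Carath\'eodory, and carry the bound $\|f1_{\widetilde E^i_k}\|_{L^q_\nu(\ell^r_\omega)}\lesssim \Psi^{k}\mu(\widetilde E^i_k)^{1/q}$ from the stopping rule; the \emph{reverse} inequality of Lemma \ref{thm:L_q_orthogonality} then reassembles $\|f\|_{L^q_\nu(\ell^r_\omega)}$. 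Second, the claims that $C_1$ (for $q\leq r$) and $C_2$ (for $q\geq r$) are free of $\Phi,K$ are not a ``Jensen- or H\"older-type comparison of super-level measures'': the actual content is the sub-/super-orthogonality of $\|\cdot\|^q_{L^q_\nu(\ell^r_\omega)}$ over \emph{arbitrary} disjoint families (Lemma \ref{thm:L_q_subsuperorthogonality}), a substantive lemma whose proof for $q=1$, $r\leq 1$ requires its own regrouping of the inner decomposition. For $q\geq r$ that lemma lets the greedy run without parent sets at all (unions of admissible sets remain admissible), which is what removes both $\Phi$ and $K$ from $C_2$; for $q\leq r$ the lower bound follows from optimal super-level sets plus superorthogonality. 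Neither of these steps is supplied by your proposal.
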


\begin{theorem} \label{thm:Holder_triangular_2_step_iteration_finite}
	For every $p,q \in (1,\infty)$, $r \in [q, \infty)$, $\Phi,K \geq 1$, there exists a constant $C=C(p,q,r,\Phi,K)$ such that the following property holds true. 
	
	Let $X$ be a finite set, $\mu,\nu$ outer measures, $\omega$ a measure, and $\mathcal{C}$ a $\mu$-covering function such that $(X,\mu,\nu,\mathcal{C})$ satisfies the canopy condition \ref{def:parent_collection_nesting}. Then
	\begin{enumerate} [(i)]
		\item For every function $f \in L^p_\mu(\ell^q_\nu(\ell^r_\omega))$ on $X$, we have
		\begin{equation} \label{eq:Holder_2_iterated}
		C^{-1} \norm{f}_{L^{p}_\mu(\ell^{q}_\nu(\ell^{r}_\omega))} \leq \sup_{\norm{g}_{L^{p'}_\mu(\ell^{q'}_\nu(\ell^{r'}_\omega))} = 1} \norm{fg}_{L^1(X,\omega)} \leq C \norm{f}_{L^{p}_\mu(\ell^{q}_\nu(\ell^{r}_\omega))} .
		\end{equation}
		\item For every collection of functions $\{ f_n \colon n \in \N \} \subseteq {L^{p}_\mu(\ell^{q}_\nu(\ell^{r}_\omega))}$ on $X$, we have
		\begin{equation} \label{eq:triangle_2_iterated}
		\norm{\sum_{n \in \N} f_n}_{L^{p}_\mu(\ell^{q}_\nu(\ell^{r}_\omega))} \leq C \sum_{n \in \N} \norm{f_n}_{L^{p}_\mu(\ell^{q}_\nu(\ell^{r}_\omega))}.
		\end{equation}
	\end{enumerate}
	
	For every $p,q \in (1,\infty), r \in (1,q]$, $\Phi,K \geq 1$, there exists a constant $C=C(p,q,r,\Phi,K)$ such that the analogous property holds true for every finite set $X$, outer measures $\mu,\nu$, measure $\omega$, and $\mu$-covering function $\mathcal{C}$ such that $(X,\mu,\nu,\mathcal{C})$ satisfies the crop condition \ref{def:anti_stacking_compatibility}.
	
	If $q = r$, the constant $C$ does not depend on $\Phi,K$.
\end{theorem}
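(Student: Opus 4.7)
The plan is to establish the duality statement (i) first and derive the triangle inequality (ii) as an immediate consequence. The upper bound $\sup_g \norm{fg}_{L^1(X,\omega)} \leq C \norm{f}_{L^p_\mu(\ell^q_\nu(\ell^r_\omega))}$ in \eqref{eq:Holder_2_iterated} is already furnished by the outer H\"{o}lder inequality \eqref{eq:outer_Holder} and requires no structural assumption. The difficult direction is to construct, for every $f \in L^p_\mu(\ell^q_\nu(\ell^r_\omega))$, a witness $g$ with $\norm{g}_{L^{p'}_\mu(\ell^{q'}_\nu(\ell^{r'}_\omega))} \lesssim 1$ achieving $\norm{fg}_{L^1(X,\omega)} \gtrsim \norm{f}_{L^p_\mu(\ell^q_\nu(\ell^r_\omega))}$.

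To build $g$, I would decompose $f = \sum_k f_k$ along a dyadic chain of $\mu$-super level sets $E_k$ of the size $\ell^q_\nu(\ell^r_\omega)(f)$ at levels $\lambda_k = 2^k$, each $E_k$ chosen to come within a factor $2$ of the infimum in \eqref{eq:super_level_measure}. Replacing $E_k$ by its parent $\mathbf{B}_{\mathcal{C}}(E_k)$ costs only the factor $\Phi$ in $\mu$-mass. On each layer $E_k \setminus E_{k-1}$, the piece $f_k$ has controlled inner size, so the single-iterated duality result of \cite{2020arXiv200105903F}, applied in the two-structure space $L^q_\nu(\ell^r_\omega)$, produces a local witness $g_k$ supported on the layer with $\norm{g_k}_{L^{q'}_\nu(\ell^{r'}_\omega)} \lesssim 1$ and $\norm{f_k g_k}_{L^1(X,\omega)}$ comparable to $\lambda_k$ times the layer's dual mass. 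The candidate testing function is then $g = \sum_k c_k g_k$ with scalar weights $c_k$ chosen in the standard way to normalise the outer $L^{p'}_\mu$ factor against the dyadic thicknesses of the layers.

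The hard step, and the one that discriminates between the canopy and crop regimes, is the estimate for $\norm{g}_{L^{p'}_\mu(\ell^{q'}_\nu(\ell^{r'}_\omega))}$: one must combine the individual $\ell^{q'}_\nu(\ell^{r'}_\omega)$ norms of the $g_k$ across generations without accumulating a logarithmic loss in the number of layers. When $r \geq q$ (so $q' \geq r'$), the inner $\ell^{q'}_\nu$ computation demands an additivity property of $\nu$-masses: the support of $g_k$ sits outside $\mathbf{B}_{\mathcal{C}}(E_{k-1})$ by construction, and Condition \ref{def:parent_collection_nesting} is precisely the closure of the $\nu$-Carath\'{e}odory property \eqref{eq:K_Carath\'{e}odory} under addition of such disjoint pieces, yielding $\sum_k \nu(U \cap \operatorname{supp}(g_k)) \lesssim K\, \nu\big(U \cap \bigcup_k \operatorname{supp}(g_k)\big)$. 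In the reverse regime $r \leq q$ (so $q' \leq r'$), this outside-to-inside assembly is not the right one; instead, for each layer one has to prune $\mathcal{C}(E_k)$ to a $\nu$-Carath\'{e}odory subcollection without disturbing the parent structure on finer layers below, which is exactly the content of Condition \ref{def:anti_stacking_compatibility}. Once the appropriate Carath\'{e}odory control is in place, a geometric summation over $k$ via (outer) H\"{o}lder closes the estimate.

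Finally, the triangle inequality \eqref{eq:triangle_2_iterated} is an automatic consequence of \eqref{eq:Holder_2_iterated}, since the right-hand side of \eqref{eq:Holder_2_iterated} is sublinear in $f$. The independence of $C$ from $\Phi,K$ at $q = r$ reflects the degeneracy of the middle layer: when $q = r$, the inner size $\ell^q_\nu(\ell^r_\omega)$ reduces effectively to an $L^q$ size against a single measure, the middle $\nu$-structure no longer interacts non-trivially with the outer $\mu$-structure, and the argument reduces to the single-iterated duality of \cite{2020arXiv200105903F} in which neither the canopy nor the crop condition, nor the parameters $\Phi,K$, play any role.
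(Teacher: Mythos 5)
Your outline coincides with the paper's architecture: a two-level decomposition of $f$ (outer super level sets $E_k$ of $\ell^q_\nu(\ell^r_\omega)(f)$, then inner level sets of $\ell^r_\omega$ on each piece), an explicit witness $g$ assembled from weighted local pieces, the upper bound in \eqref{eq:Holder_2_iterated} from outer H\"older together with the collapsing \eqref{eq:collapsing_pairing}, and (ii) as a corollary of (i). The dichotomy you assign, canopy for $r \geq q$ and crop for $r \leq q$, is also the correct one.

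However, there is a genuine gap at precisely the step you flag as hard, and the mechanism you sketch would not close it. First, invoking the single-iterated duality of \cite{2020arXiv200105903F} as a black box on each layer only controls $\norm{g_k}_{L^{q'}_\nu(\ell^{r'}_\omega)}$ globally; to bound $\norm{g}_{L^{p'}_\mu(\ell^{q'}_\nu(\ell^{r'}_\omega))}$ one must estimate the size $\ell^{q'}_\nu(\ell^{r'}_\omega)(g 1_{F_k^c})(F)$ on an \emph{arbitrary} test set $F$, which requires the localized estimate $\norm{g_{\widetilde{k}} 1_F}_{L^{q'}_\nu(\ell^{r'}_\omega)}^{q'} \leq C \norm{f_{\widetilde{k}} 1_{\mathbf{B}_{\mathcal{C}}(F)}}_{L^q_\nu(\ell^r_\omega)}^{q}$. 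This is where the explicit form $g = f^{r-1}\sum_{k,j}2^{k(p-q)}2^{j(q-r)}1_{W^k_j}$ is needed and, in the regime $q>r$, where the crop condition actually enters: each inner level set $U^k_j$ must be pruned by discarding the elements of $\mathcal{E}$ on which $\ell^r_\omega(f_{k,j})$ is small, so that $\nu(W^k_j\cap F)$ is dominated by a super level measure of $f_k 1_{\mathbf{B}_{\mathcal{C}}(F)}$ via Lemma \ref{thm:atomic_super_level_set_interior_level}; it is not an assembly device across generations. Second, you misplace the canopy condition: in the regime $q<r$ it is consumed in the decomposition of $f$ itself, not in the estimate of $g$. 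Since $\ell^q_\nu(\ell^r_\omega)$ is only $q$-superorthogonal when $q\leq r$, the greedy selection at level $2^k$ must produce a $\nu$-Carath\'eodory family $\{E_{k,n}\}_n$ in order to apply Lemma \ref{thm:L_q_orthogonality} and conclude that the union of the selected pieces still has size comparable to $2^k$, and the canopy condition is exactly what permits adjoining each new piece, disjoint from the parent of the previously selected ones, while preserving the Carath\'eodory property. Without these two ingredients the final geometric summation over $k$ does not go through.
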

The first result describes one instance of the "collapsing effect". When we have two consecutive outer $L^p$ space structures associated with the same exponent, under certain conditions, the "exterior" one can be disregarded. We recall that, as a consequence of the "collapsing effect" in the single iterated case, property $(i)$ of Theorem 1.1 in \cite{2020arXiv200105903F}, for every $p,r \in (0,\infty]$, we have
\begin{equation*}
C^{-1} \norm{f}_{L^p_\mu(\ell^r_\omega)} \leq \norm{f}_{L^p_\mu(\ell^r_\nu(\ell^r_\omega))} \leq C \norm{f}_{L^p_\mu(\ell^r_\omega)},
\end{equation*}
where the constant $C=C(p,r)$ does not depend on $\Phi,K$, and it is uniform in $X,\mu,\nu,\omega$. Hence, the double iterated outer $L^p$ spaces are reduced to single iterated ones. In particular, when $p=q=r \in (0,\infty]$, we have the full "collapsing effect"
\begin{equation} \label{eq:collapsing_pairing}
C^{-1} \norm{f}_{L^r(X,\omega)} \leq \norm{f}_{L^r_\mu(\ell^r_\nu(\ell^r_\omega))} \leq C \norm{f}_{L^r(X,\omega)},
\end{equation}
with constant $C=C(r)$ uniform in $X,\mu,\nu,\omega$.

The second result yields the sharpness of outer H\"{o}lder's inequality. As a consequence, the iterated outer $L^p_\mu(\ell^q_\nu(\ell^r_\omega))$ quasi-norm inherits from the $L^1(X,\omega)$-pairing a quasi-triangle inequality up to a constant uniform in the number of the summands, which is stated in the second property. Therefore, in the prescribed range of exponents, the double iterated outer $L^{p}$ space is uniformly isomorphic to a Banach space with norm defined by the supremum in \eqref{eq:Holder_2_iterated}. Moreover, it is the K\"{o}the dual space of the outer $L^{p'}_\mu(\ell^{q'}_\nu(\ell^{r'}_\omega))$ space, and we refer to \cite{2020arXiv200105903F} for an explanation of the use of the term K\"{o}the duality in this context. 

The main focus of both of the theorems is on the dependence of the constants in \eqref{eq:collapsing_2_iterated}, \eqref{eq:Holder_2_iterated}, and \eqref{eq:triangle_2_iterated}. A class of counterexamples we exhibit in Subsection \ref{subsec:counterexamples} shows that these constants are not uniform in $\Phi,K$, at least in a certain range of exponents $p,q,r \in (0, \infty]$. Therefore, in these cases it is necessary to require some conditions on the setting $(X,\mu,\nu,\omega)$ or the exponents $p,q,r$ to recover, under these additional assumptions, the uniformity of the constant in the cardinality of $X$. In particular, this observation points out a substantial difference with the single iterated case, where pathological behaviours of the outer $L^p$ spaces appear only in the endpoint cases corresponding to the exponents $p_1 = \infty$ or $p_2 = 1$. As a matter of fact, it would be interesting  to identify necessary and sufficient conditions on the setting $(X,\mu,\nu,\omega)$ in order to obtain the results stated in Theorem \ref{thm:collapsing_2_step_iteration_finite} and Theorem \ref{thm:Holder_triangular_2_step_iteration_finite} uniformly in the cardinality of $X$. Finally, we mention the dichotomy between the cases $q > r$ and $q < r$ in the statement of the two theorems, in particular in view of the reduction to the single iterated outer $L^p$ spaces in the case $q=r$. While in Theorem \ref{thm:collapsing_2_step_iteration_finite} this phenomenon is in part explained by the class of counterexamples, it would be interesting to clarify whether in Theorem \ref{thm:Holder_triangular_2_step_iteration_finite} it is an intrinsic feature of the problem or it is just an artefact of the argument used in the proof. If the former case were true, it would be interesting to clarify how the dichotomy between the cases $q > r$ and $q < r$ was reflected in the necessary and sufficient conditions to recover the uniformity of the constant in the cardinality of $X$. 

Before moving on, we briefly comment on the definition of $\nu$-Carath\'{e}odory collections and the conditions we stated before the results. We start observing that the Carath\'{e}odory measurability test with respect to an outer measure $\mu^{*}$ corresponds to checking that the collection $\{ E, E^c \}$ is $\mu^{*}$-Carath\'{e}odory with parameter $1$. In particular, when $\nu$ is a measure, every collection of pairwise disjoint measurable subsets of $X$ is $\nu$-Carath\'{e}odory with parameter $K=1$. This fact implies that, in the single iterated case, we can always deal with $\nu$-Carath\'{e}odory collections, which come with desirable properties. In particular, for every set $X$, outer measure $\mu$, measure $\omega$, the quadruple $(X,\mu,\omega,\Id)$ satisfies both the canopy condition \ref{def:parent_collection_nesting} and the crop condition \ref{def:anti_stacking_compatibility} with parameters $\Phi=K=1$.

The extension of the results stated in Theorem \ref{thm:collapsing_2_step_iteration_finite} and Theorem \ref{thm:Holder_triangular_2_step_iteration_finite} to infinite settings under reasonable assumptions should not be a surprise. However, this level of generality is beyond the scope of the paper. We concern ourselves only with two specific infinite settings, namely the one described by Uraltsev in \cite{2016arXiv161007657U} and a slight variation of it, both of them defined on the upper half $3$-space. Although not equivalent, these settings exhibit similar geometric properties. We focus mainly on the latter, which allows for a better exploitation of them. 

We briefly recall the setting that we describe in detail in Subsection \ref{subsec:3spacedyadic}. Let $X$ be the upper half $3$-space $ \R \times (0,\infty) \times \R$, and $\omega$ the measure induced on it by the Lebesgue measure $\diff y \diff t \diff \eta$ on $\R^3$. On $X$, we define two outer measures by means of the following covering construction. Given a collection $\mathcal{S}$ of subsets of $X$ and a pre-measure $\sigma \colon \mathcal{S} \to (0, \infty)$, we define the outer measure $\mu \colon \mathcal{P}(X) \to [0, \infty]$ on an arbitrary subset $A$ of $X$ by 
\begin{equation} \label{eq:outer_measure_from_pre_measure}
\mu(A) = \inf\{ \sum_{S \in \mathcal{S}'} \sigma(S) \colon \mathcal{S}' \subseteq \mathcal{S}, A \subseteq \bigcup_{S \in \mathcal{S}'} S \}.
\end{equation}

First, for any dyadic interval $I \subseteq \R$, let $D(I)$ be the dyadic strip given by the Cartesian product between $I$, the interval $(0,\abs{I})$, and $\R$. Let $\mathcal{D}$ be the collection of all the dyadic strips, and, for every $D(I) \in \mathcal{D}$, let $\sigma$ be the length of the base $I$.

Second, for any couple of dyadic intervals $I,\widetilde{I} \subseteq \R$ with inverse lengths, let $T(I,\widetilde{I})$ be the dyadic tree given by the union of the Cartesian products between a dyadic interval $J \subseteq I$, the interval $(0,\abs{J})$, and the dyadic interval $\widetilde{J} \supseteq \widetilde{I}$ with inverse length of $J$. Let $\mathcal{T}$ be the collection of all the dyadic trees, and, for every $T(I,\widetilde{I}) \in \mathcal{T}$, let $\tau$ be the length of the base $I$. 

Now, let $\mu,\nu$ be the outer measures on $X$ associated with $(\mathcal{D},\sigma),(\mathcal{T},\tau)$ respectively as in \eqref{eq:outer_measure_from_pre_measure}. As we will see in Appendix \ref{sec:dyadic_geometry}, for every dyadic strip $D$ in $\mathcal{D}$ and every dyadic tree $T$ in $\mathcal{T}$, we have
\begin{equation*}
\mu(D) = \sigma(D), \qquad \qquad \nu(T) = \tau(T).
\end{equation*}
We define the double iterated outer $L^p$ space $L^p_\mu(\ell^q_\nu(\ell^r_\omega))$ in the upper half $3$-space setting through the quasi-norm in \eqref{eq:outer_recursion} for $\omega$-measurable functions. We use $\mu_1=\omega$, $\mu_2 = \nu$, $\mu_3 = \mu$, and we restrict the supremum in $\mathbf{I}_1$ to the $\omega$-measurable sets, that in $\mathbf{I}_2$ to the dyadic trees in $\mathcal{T}$, and that in $\mathbf{I}_3$ to the dyadic strips in $\mathcal{D}$.

In this setting, we have both the "collapsing effect" and the sharpness of outer H\"{o}lder's inequality described in the finite setting in the previous theorems.  
\begin{theorem} \label{thm:collapsing_Holder_triangular_upper_3_space}
Let $(X,\mu,\nu,\omega)$ be the dyadic upper half $3$-space setting just described, $p,q,r \in (0,\infty]$. There exists a constant $C=C(p,q,r)$ such that the following properties hold true.
\begin{enumerate} [(i)]
	\item For every $q,r \in (0,\infty)$, for every function $f \in L^q_\mu(\ell^q_\nu(\ell^r_\omega))$ on $X$, we have
	\begin{equation} \label{eq:collapsing_2_iterated_upper_3_space}
	C^{-1} \norm{f}_{L^q_\nu(\ell^r_\omega)} \leq \norm{f}_{L^q_\mu(\ell^q_\nu(\ell^r_\omega))} \leq C \norm{f}_{L^q_\nu(\ell^r_\omega)}.
	\end{equation}
	\item For every $p,q,r \in (1,\infty)$, for every function $f \in L^p_\mu(\ell^q_\nu(\ell^r_\omega))$ on $X$, we have
	\begin{equation} \label{eq:Holder_2_iterated_upper_3_space}
	C^{-1} \norm{f}_{L^{p}_\mu(\ell^{q}_\nu(\ell^{r}_\omega))} \leq \sup_{\norm{g}_{L^{p'}_\mu(\ell^{q'}_\nu(\ell^{r'}_\omega))} = 1} \norm{fg}_{L^1(X,\omega)} \leq C \norm{f}_{L^{p}_\mu(\ell^{q}_\nu(\ell^{r}_\omega))} .
	\end{equation}
	\item For every $p,q,r \in (1,\infty)$, for every collection of functions $\{ f_n \colon n \in \N \} \subseteq {L^{p}_\mu(\ell^{q}_\nu(\ell^{r}_\omega))}$ on $X$, we have
	\begin{equation} \label{eq:triangle_2_iterated_upper_3_space}
	\norm{\sum_{n \in \N} f_n}_{L^{p}_\mu(\ell^{q}_\nu(\ell^{r}_\omega))} \leq C \sum_{n \in \N} \norm{f_n}_{L^{p}_\mu(\ell^{q}_\nu(\ell^{r}_\omega))}.
	\end{equation}
\end{enumerate}
\end{theorem}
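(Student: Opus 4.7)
The plan is to reduce Theorem~\ref{thm:collapsing_Holder_triangular_upper_3_space} to the finite-setting Theorems~\ref{thm:collapsing_2_step_iteration_finite} and~\ref{thm:Holder_triangular_2_step_iteration_finite}. The key is to verify that the dyadic upper half $3$-space admits a $\mu$-covering function $\mathcal{C}$ for which $(X,\mu,\nu,\mathcal{C})$ satisfies the canopy condition \ref{def:parent_collection_nesting} (and the crop condition \ref{def:anti_stacking_compatibility}, where needed) with \emph{absolute} parameters $\Phi,K$. The natural choice is: let $\mathcal{E}=\mathcal{D}$ (the collection of dyadic strips), and set $\mathcal{C}(A)$ to be the inclusion-maximal elements of an almost-optimal dyadic-strip cover of $A$. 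Since dyadic intervals of $\R$ are nested or disjoint, $\mathcal{C}(A)$ is automatically pairwise disjoint and covers $A$ with total $\sigma$-length at most $\Phi\mu(A)$ for any $\Phi>1$.

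The main technical step is the canopy verification. Given a $\nu$-Carath\'{e}odory collection $\mathcal{A}$ with $\bigcup_{A\in\mathcal{A}} A\subseteq\bigsqcup_j D(I_j)=\mathbf{B}_\mathcal{C}(\bigcup_A A)$ and a set $D$ disjoint from $\bigsqcup_j D(I_j)$, one must show
\begin{equation*}
\sum_{A\in\mathcal{A}}\nu(U\cap A)+\nu(U\cap D)\leq K\,\nu\Big(U\cap\big(D\cup\bigcup_{A\in\mathcal{A}} A\big)\Big)
\end{equation*}
for every $U\subseteq X$. Starting from an almost-optimal dyadic-tree cover $\{T_k=T(I'_k,\widetilde I'_k)\}$ of the set on the right, and using the facts that $T(I,\widetilde I)\subseteq D(I)$ and that dyadic intervals nest, one classifies each $T_k$: \emph{disjoint} if $I'_k\cap I_j=\emptyset$ for all $j$ (then $T_k\cap\bigcup_A A=\emptyset$), \emph{small} if $I'_k\subseteq I_j$ for some $j$ (then $T_k\subseteq D(I_j)$ is disjoint from $D$), or \emph{big} if $I_j\subsetneq I'_k$ for some $j$. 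In the big case, the sub-tree of $T_k$ with bases in each such $I_j$ is itself a tree $T(I_j,\cdot)\subseteq D(I_j)$, while the residual portion of $T_k$ (bases in $I'_k\setminus\bigcup_j I_j$ together with the upper stripes over each $I_j$ at scales $\abs{J}>\abs{I_j}$) admits a cover by dyadic trees of total $\tau$-length $O(\abs{I'_k})$. Rearranging, the original cover decomposes, up to an absolute multiplicative constant, into separate covers of $U\cap D$ and of $U\cap\bigcup_A A$; combined with the $K$-Carath\'{e}odory hypothesis on $\mathcal{A}$, this yields canopy with an absolute $K$. The crop condition follows from an analogous dyadic decomposition, taking $\mathcal{D}$ to be a maximal pairwise base-disjoint subcollection of $\mathcal{A}$.

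With $\Phi,K$ absolute, transferring the result to the full upper half $3$-space is a discretization: restrict to functions constant on Lebesgue cubes of side $2^{-N}$ supported in $[-2^N,2^N]\times[2^{-N},2^N]\times[-2^N,2^N]$, so that only finitely many dyadic strips and trees are relevant and Theorems~\ref{thm:collapsing_2_step_iteration_finite} and~\ref{thm:Holder_triangular_2_step_iteration_finite} apply directly with constants depending only on $p,q,r$. Monotone convergence of the super level measures as $N\to\infty$ yields (i), (ii), and (iii). The main obstacle is the big case of the canopy verification: covering the residual of $T_k$ after removing the sub-trees $T(I_j,\cdot)$ without losing more than an absolute constant factor in $\tau$-length, which uses specifically the geometric interplay between $\mathcal{T}$ and $\mathcal{D}$.
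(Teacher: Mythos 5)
Your overall strategy is the same as the paper's: construct a $\mu$-covering function from dyadic strips, verify the canopy and crop conditions with absolute parameters, and transfer to the infinite setting by discretization and approximation. However, there is a genuine gap in the canopy verification, and it lies precisely where you locate the "main obstacle". The canopy condition \ref{def:parent_collection_nesting} requires that $\mathcal{A}\cup\{D\}$ be $\nu$-Carath\'{e}odory with the \emph{same} parameter $K$ as $\mathcal{A}$, not with $K$ degraded by an absolute factor. This is essential because the decomposition in Proposition \ref{thm:atomic_decomposition_exterior_level_1_no_partition} applies the condition iteratively, adding one set $E_{k,n}$ at a time to the growing collection; if each addition worsened the parameter from $K$ to $K+C$ or $CK$, the constant would blow up with the number of selected sets. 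Your argument, as sketched, splits a near-optimal tree cover of $U\cap(D\cup\bigcup_A A)$ into a cover of $U\cap D$ and a cover of $U\cap\bigcup_A A$ "up to an absolute multiplicative constant", which at best yields $\sum_A\nu(U\cap A)+\nu(U\cap D)\leq (K+C)\,\nu(U\cap(D\cup\bigcup_A A))$ — a strictly weaker statement that does not establish the canopy condition. With your choice of $\mathcal{C}(A)$ (maximal strips of a near-optimal cover, with $\mu(\mathbf{B}_{\mathcal{C}}(A))\leq\Phi\mu(A)$ for $\Phi$ close to $1$) there is no room to absorb the extra term: a tree $T$ meeting $D$ can still have almost all of $\pi(T)$ covered by the $I_j$. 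The paper's construction avoids this by inserting a doubling step $\mathcal{N}$ (adding every strip whose base interval is at least half covered by $\pi$ of the cover) before taking maximal elements; then any tree $T$ meeting the complement of $\mathbf{B}_{\mathcal{C}}(\bigcup_A A)$ satisfies $\nu(T)\geq 2\,\nu(T\cap\bigcup_A A)$, and for $K\geq 2$ one gets $K\nu(T)\geq\nu(T\cap D)+2(K-1)\nu(T\cap\bigcup_A A)\geq\nu(T\cap D)+K\nu(T\cap\bigcup_A A)$, preserving $K$ exactly. Relatedly, your $\mathcal{C}$ built from an "almost-optimal" cover is not canonical and need not be monotone, while the definition of a $\mu$-parent function requires monotonicity (used to guarantee the nesting $F_{k+1}\subseteq F_k$ in the decompositions); the paper's canonical choice $\mathcal{Q}(A)=\{E\in\mathcal{D}\colon E_+\cap A\neq\varnothing\}$ is monotone and exactly optimal.

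A secondary, smaller issue: the final passage to the infinite setting is not just monotone convergence. For outer measures the super level measure is an infimum over covers and is not obviously continuous from below along increasing cutoffs $f1_{X_J}\nearrow f$; the paper needs dedicated lemmas (the analogues of Lemma \ref{thm:monotone_approximation_size} and Lemma \ref{thm:monotone_approximation_size_exterior}, proved in Appendix \ref{sec:approximation} using Lemma \ref{thm:atomic_super_level_set_interior_level} and Lemma \ref{thm:atomic_super_level_set_exterior_level}) to find a single $J$ that approximates the whole quasi-norm, and a separate verification that the correspondence $f\mapsto F(f,r)$ with the discrete model preserves the iterated quasi-norms exactly, plus the reverse construction of the dualizing function $g$ on $X$ from the discrete one for property (ii). These steps are routine once the finite-setting theorems are available with uniform constants, but they should be acknowledged as requiring proof rather than dismissed as immediate.
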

The result analogous to Theorem \ref{thm:collapsing_Holder_triangular_upper_3_space} holds true even in the upper half $3$-space setting with arbitrary strips and trees originally considered in \cite{2016arXiv161007657U} that we describe in detail in Subsection \ref{subsec:3spacearbitrary}. 

We conclude pointing out that the outer $L^p$ spaces used by Uraltsev are different from those defined in \eqref{eq:outer_recursion}. In \cite{2016arXiv161007657U}, the innermost size, namely the quantity in \eqref{eq:size} for $n=2$, is not defined by a single Lebesgue norm with respect to the measure $\omega$, but by a sum of an $L^2$ and an $L^\infty$ norms, making it more complicated to treat. The first step in the study of these spaces would be to extend the results stated in Theorem \ref{thm:collapsing_Holder_triangular_upper_3_space} to the case $r=\infty$. This is likely to be achieved exploiting the geometric properties of the strips and trees in the upper half $3$-space in the same fashion of the boxes in the upper half space in \cite{2020arXiv200105903F}. The second step, the one requiring new considerations, would be to address the issue of the variable exponent Lebesgue norm.

\subsection*{Guide to the paper}

In Section \ref{sec:preliminaries}, we review some preliminaries about outer $L^p$ quasi-norms and, more specifically, single iterated outer $L^p$ ones from \cite{2020arXiv200105903F}. 
In Section \ref{sec:equivalence}, we prove Theorem \ref{thm:collapsing_2_step_iteration_finite} and Theorem \ref{thm:Holder_triangular_2_step_iteration_finite}. Moreover, we exhibit a class of counterexamples to the unconditional uniformity in the cardinality of $X$ of the constants appearing in the statements of these theorems at least in a certain range of exponents $p,q,r \in (0,\infty]$. 
In Section \ref{sec:examples}, we describe some settings in which we define a $\mu$-covering function satisfying the canopy condition \ref{def:parent_collection_nesting} and the crop condition \ref{def:anti_stacking_compatibility}.
In Section \ref{sec:dyadic_upper_space}, we prove Theorem \ref{thm:collapsing_Holder_triangular_upper_3_space} in the dyadic upper half $3$-space setting reducing the problem to an equivalent one in a finite setting via an approximation argument. 
The proof relies on the geometric properties of the outer measures and the approximation properties of functions in iterated outer $L^p$ spaces that we will prove in Appendix \ref{sec:dyadic_geometry} and Appendix \ref{sec:approximation}, respectively.

\section*{Acknowledgements}
The author gratefully acknowledges financial support by the CRC 1060 \emph{The Mathematics of Emergent Effects} at the University of Bonn, funded through the Deutsche Forschungsgemeinschaft. 

Part of the work was initiated during the visit to the MFO in Oberwolfach in July 2020 for the workshop \emph{Real Analysis, Harmonic Analysis and Applications}. 

The author is thankful to Christoph Thiele for helpful discussions, comments and suggestions that greatly improved the exposition of the material, and for his continuous support.

\section{Preliminaries} \label{sec:preliminaries}

In this section, we make some observations about the outer $L^p$ quasi-norms. Moreover, we review the decomposition result for functions in a single iterated outer $L^p$ space, which is the main ingredient in proving the results corresponding to Theorem \ref{thm:collapsing_2_step_iteration_finite} and Theorem \ref{thm:Holder_triangular_2_step_iteration_finite} in \cite{2020arXiv200105903F}. It provides a model for the decomposition in the case of double iterated outer $L^p$ spaces that we perform in Section \ref{sec:equivalence}.  

First, for every $p \in (0, \infty)$, we observe that we can replace the integral defining the outer $L^p$ quasi-norm in \eqref{eq:outer_recursion} by a discrete version of it. For every $\Psi > 1$, we have
\begin{equation} \label{eq:discretized_norm}
\norm{f}_{L^p_\mu(S)}^p \sim_{\Psi,p} \sum_{k \in \Z} \Psi^{kp} \mu(S(f) > \Psi^k) \sim_{\Psi,p} \sum_{k \in \Z} \Psi^{kp} \sum_{l \geq k} \mu(S(f) > \Psi^l),
\end{equation}
where $S$ is a size of the form $\ell^r_\omega$ or $ \ell^q_\nu(\ell^r_\omega)$, and more generally an arbitrary size in the definition in \cite{MR3312633}. The equivalences in \eqref{eq:discretized_norm} follow by the monotonicity of the super level measure, Fubini and the bounds on the geometric series. 

Next, let $X$ be a finite set, $\mu, \nu$ outer measures, and $\omega$ a measure. Since $\mu,\nu$ are finite and strictly positive on every nonempty subset of $X$, by outer H\"{o}lder's inequality, Proposition 3.4 in \cite{MR3312633}, we have
\begin{equation}
\begin{gathered} \label{eq:Holder_containment}
L^q_\nu(\ell^r_\omega) \subseteq L^\infty_\nu(\ell^r_\omega), \\
L^p_\mu(\ell^q_\nu(\ell^r_\omega)) \subseteq L^\infty_\mu(\ell^q_\nu(\ell^r_\omega)) \cap L^\infty_\mu(\ell^\infty_\nu(\ell^r_\omega)).
\end{gathered}
\end{equation}

Finally, we recall two results for single iterated outer $L^p$ spaces already appearing, explicitly or implicitly stated, in Proposition 2.1 in \cite{2020arXiv200105903F}, with their proofs.

\begin{lemma} \label{thm:atomic_super_level_set_interior_level}
	For every $r \in (0, \infty)$, $N \geq 1$, there exist constants $C=C(r,N)$, $c=c(N)$ such that the following property holds true.
	
	Let $X$ be a set, $\nu$ an outer measure, and $\omega$ a measure.	Let $f \in L^\infty_\nu(\ell^r_\omega)$ be a function on $X$, let $k \in \Z$ satisfy
	\begin{equation} \label{eq:bound_L_infty_interior_level}
	\norm{f}_{L^\infty_\nu(\ell_\omega^r)} \in (2^k, 2^{k+1}],
	\end{equation}
	and let $A$ be a subset of $X$ such that
	\begin{equation} \label{eq:lower_bound_condition_interior_level}
	\norm{f 1_{A}}_{L^r(X,\omega)}^r > 2^{(k-N)r} \nu(A).
	\end{equation}
	Then we have
	\begin{equation} \label{eq:atomic_super_level_set_interior_level}
	\nu(A) \leq C \nu( \ell^r_\omega(f) > c 2^k).
	\end{equation}
\end{lemma}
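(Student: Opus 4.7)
The plan is to unwind the definition of the super level measure
\begin{equation*}
\nu(\ell^r_\omega(f) > c 2^k) = \inf\{\nu(B) \colon \mathbf{I}_2(f 1_{B^c}) \leq c 2^k\}
\end{equation*}
and show that for any admissible witness $B$ in this infimum one has $\nu(A) \leq C \nu(B)$ with a constant $C=C(r,N)$; taking the infimum over $B$ then yields \eqref{eq:atomic_super_level_set_interior_level}. The only structural ingredient I would exploit is that the innermost norm is a genuine Lebesgue norm on a measure space, so that for any $B$ the additive decomposition
\begin{equation*}
\norm{f 1_A}_{L^r(X,\omega)}^r = \norm{f 1_{A \cap B}}_{L^r(X,\omega)}^r + \norm{f 1_{A \cap B^c}}_{L^r(X,\omega)}^r
\end{equation*}
holds with equality; the rest is just a bookkeeping of constants.

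First I would control the $B^c$ piece by testing the supremum in the definition of $\mathbf{I}_2(f 1_{B^c}) = \sup_E \nu(E)^{-1/r} \norm{f 1_{B^c} 1_E}_{L^r(X,\omega)}$ on $E = A$: the hypothesis on $B$ yields $\norm{f 1_{A \cap B^c}}_{L^r(X,\omega)}^r \leq (c 2^k)^r \nu(A)$. Choosing $c = c(N) = 2^{-N-1}$, which is independent of $r$ and strictly smaller than $2^{-N}$, this upper bound is strictly smaller than the lower bound \eqref{eq:lower_bound_condition_interior_level}, and subtracting from the additive decomposition above leaves
\begin{equation*}
\norm{f 1_{A \cap B}}_{L^r(X,\omega)}^r \geq (2^{-Nr} - c^r) 2^{kr} \nu(A).
\end{equation*}

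Second I would apply \eqref{eq:bound_L_infty_interior_level}, which amounts to $\mathbf{I}_2(f) \leq 2^{k+1}$, this time tested on $E = B$: it gives $\norm{f 1_B}_{L^r(X,\omega)}^r \leq 2^{(k+1)r} \nu(B)$, which dominates $\norm{f 1_{A \cap B}}_{L^r(X,\omega)}^r$ trivially by monotonicity. Combining with the lower bound from the first step and cancelling the common factor $2^{kr}$ yields $\nu(A) \leq C \nu(B)$ with $C = C(r,N) = 2^{(N+2)r}/(2^r - 1)$, and taking the infimum over all admissible $B$ concludes the argument.

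I do not expect a serious obstacle here: the statement is essentially a direct unfolding of the outer $L^p$ definitions together with the additivity of $\norm{\cdot}_{L^r(X,\omega)}^r$. The only minor point of attention is that $c$ must be independent of $r$, which is ensured by any fixed $c < 2^{-N}$ since then $c^r < 2^{-Nr}$ for every $r \in (0,\infty)$ by monotonicity of the exponential; the dependence of $C$ on $r$ (and its blow-up as $r \to 0$) is then unavoidable from the $(2^r-1)$ factor produced by the subtraction.
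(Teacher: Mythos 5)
Your argument is correct and is essentially the paper's proof: the same additive splitting of $\norm{f 1_A}_{L^r(X,\omega)}^r$ over $B$ and $B^c$, the same choice $c = 2^{-N-1}$, and the same two tests of the size supremum, yielding the same constant. The only cosmetic difference is that you run the estimate for an arbitrary admissible witness $B$ and take the infimum at the end, whereas the paper fixes a near-optimal set up to a factor $(1+\varepsilon)$ and lets $\varepsilon \to 0$.
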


\begin{proof}
	Let $\varepsilon > 0$. Let $V(c2^{k},\varepsilon)$ be an optimal set associated with the super level measure $\nu(\ell^r_\omega(f)> c2^{k})$ up to the multiplicative constant $(1+\varepsilon)$, namely
	\begin{gather} \label{eq:condition_super_level_interior_level_size}
	\norm{ f 1_{V(c2^{k}, \varepsilon)^c}}_{L^\infty_\nu(\ell^r_\omega)} \leq c2^{k}, \\ \label{eq:condition_super_level_interior_level_measure}
	(1+\varepsilon) \nu(\ell^r_\omega(f)> c2^{k}) \geq \nu\big(V(c2^{k}, \varepsilon) \big),
	\end{gather}  
	where $c$ will be fixed later. We have
	\begin{align*}
	\nu\big(V(c2^{k},\varepsilon) \big) & \geq  2^{-(k+1)r}  \norm{ f 1_{V(c 2^{k},\varepsilon)} 1_{A}}^r_{L^r(X,\omega)} \\
	& \geq  2^{-(k+1)r} \big( \norm{f 1_{A}}^r_{L^r(X,\omega)} - \norm{f 1_{A \setminus V(c 2^{k},\varepsilon)}}^r_{L^r(X,\omega)} \big) \\
	& \geq  2^{-(k+1)r} \big( 2^{(k-N)r} - c^r 2^{kr} \big) \nu(A),
	\end{align*}
	where we used the monotonicity of $\nu$ and \eqref{eq:bound_L_infty_interior_level} in the first inequality, the $r$-orthogonality of the classical $L^r$ quasi-norms of functions supported on disjoint sets in the second, \eqref{eq:lower_bound_condition_interior_level} and \eqref{eq:condition_super_level_interior_level_size} in the third. By choosing 
	\begin{equation*}
	c= 2^{-N-1},
	\end{equation*}
	and taking $\varepsilon$ arbitrarily small, the previous chain of inequalities together with \eqref{eq:condition_super_level_interior_level_measure} yields the desired inequality in \eqref{eq:atomic_super_level_set_interior_level}.
\end{proof}

\begin{proposition} \label{thm:atomic_decomposition_interior_level}
	For every $q,r \in (0, \infty)$, there exist constants $C=C(q,r)$, $c=c(q,r)$ such that the following decomposition properties hold true.
	
	Let $X$ be a finite set, $\nu$ an outer measure, $\omega$ a measure. For every function $f \in L^q_\nu(\ell^r_\omega)$ on $X$, there exists a collection $\{ U_{j} \colon j \in \Z \}$ of pairwise disjoint subsets of $X$ such that, if we set
	\begin{equation*}
	V_j = \bigcup_{l \geq j} U_l,
	\end{equation*}
	then, for every $j \in \Z$, we have
	\begin{gather} 
	\label{eq:superlevel_interior_level}
	\ell^r_\omega (f 1_{V_{j+1}^c})(U_{j}) > 2^j, \qquad \qquad \text{when $U_{j} \neq \varnothing$,} \\
	\label{eq:maximal_choice_interior_level}
	\norm{f 1_{V_j^c}}_{L^\infty_\nu(\ell^r_\omega)} \leq 2^{j}, \\
	\label{eq:covering_interior_level}
	\nu(\ell^r_\omega (f) > 2^{j}) \leq \nu(V_j) , \\
	\label{eq:optimal_covering_interior_level}
	\nu (U_{j}) \leq C \nu(\ell^r_\omega (f) > c 2^{j}).
	\end{gather}
	In particular, we have
	\begin{equation} \label{eq:outer_norm_as_size_and_measure_interior_level}
	\norm{f}^q_{L^q_\nu(\ell^r_\omega)}  \sim_{r,q}  \sum_{j \in \Z} 2^{jq} \nu(U_{j}) \sim_{r,q} \sum_{j \in \Z} 2^{jq} \sum_{l \geq j} \nu(U_{l}).
	\end{equation}
\end{proposition}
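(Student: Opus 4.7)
The plan is to construct the $U_j$ greedily, decreasing in $j$ from the top. Since $X$ is finite, \eqref{eq:Holder_containment} gives $f \in L^\infty_\nu(\ell^r_\omega)$, so I choose $k_0 \in \Z$ with $\norm{f}_{L^\infty_\nu(\ell^r_\omega)} \leq 2^{k_0+1}$ and set $U_j = \varnothing$ for $j > k_0$. For $j \leq k_0$, assuming $V_{j+1} = \bigcup_{l \geq j+1} U_l$ has been defined, I let $U_j$ be a $\subseteq$-maximal element of
\[
\mathcal{U}_j = \{ U \subseteq V_{j+1}^c : \ell^r_\omega(f)(U) > 2^j \},
\]
and set $U_j = \varnothing$ when $\mathcal{U}_j$ is empty. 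A maximal element exists because $\mathcal{P}(X)$ is finite, the $U_j$ are pairwise disjoint by construction, and \eqref{eq:superlevel_interior_level} is immediate from the defining property of $\mathcal{U}_j$.

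To verify \eqref{eq:maximal_choice_interior_level}, I argue by contradiction. After reducing to the case $A \subseteq V_j^c = V_{j+1}^c \cap U_j^c$ (which only increases the content), if some nonempty such $A$ satisfied $\ell^r_\omega(f)(A) > 2^j$, then the $r$-orthogonality of the classical $L^r$ quasi-norm combined with the subadditivity of $\nu$ would give
\[
\norm{f 1_{U_j \cup A}}^r_{L^r(\omega)} = \norm{f 1_{U_j}}^r_{L^r(\omega)} + \norm{f 1_A}^r_{L^r(\omega)} > 2^{jr}\bigl(\nu(U_j) + \nu(A)\bigr) \geq 2^{jr}\,\nu(U_j \cup A),
\]
so $U_j \cup A \in \mathcal{U}_j$, contradicting the $\subseteq$-maximality of $U_j$ since $U_j \cup A \supsetneq U_j$. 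Property \eqref{eq:covering_interior_level} is then automatic from \eqref{eq:maximal_choice_interior_level}, because $V_j$ is admissible in the infimum defining the super level measure \eqref{eq:super_level_measure}.

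For \eqref{eq:optimal_covering_interior_level}, I apply Lemma \ref{thm:atomic_super_level_set_interior_level} to $f 1_{V_{j+1}^c}$. When $U_j$ is nonempty, \eqref{eq:maximal_choice_interior_level} applied at level $j+1$ yields $\norm{f 1_{V_{j+1}^c}}_{L^\infty_\nu(\ell^r_\omega)} \leq 2^{j+1}$, while \eqref{eq:superlevel_interior_level} provides the matching lower bound $\norm{f 1_{V_{j+1}^c}}_{L^\infty_\nu(\ell^r_\omega)} > 2^j$, so the hypothesis \eqref{eq:bound_L_infty_interior_level} holds with $k = j$ and (for instance) $N = 1$, while \eqref{eq:lower_bound_condition_interior_level} is read off from \eqref{eq:superlevel_interior_level} with room to spare. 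Monotonicity of the super level measure in the input function (since $\abs{f 1_{V_{j+1}^c}} \leq \abs{f}$) then upgrades the conclusion of the lemma to $\nu(U_j) \leq C\, \nu(\ell^r_\omega(f) > c 2^j)$.

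Finally, \eqref{eq:outer_norm_as_size_and_measure_interior_level} follows by pairing \eqref{eq:covering_interior_level} and \eqref{eq:optimal_covering_interior_level} with the discretization formula \eqref{eq:discretized_norm} at $\Psi = 2$: the inequality $\sum_j 2^{jq} \nu(U_j) \lesssim \norm{f}^q_{L^q_\nu(\ell^r_\omega)}$ uses \eqref{eq:optimal_covering_interior_level} after a shift of the summation index, while the reverse uses \eqref{eq:covering_interior_level} together with $\nu(V_j) \leq \sum_{l \geq j} \nu(U_l)$ and a Fubini rearrangement that converts $\sum_{j} 2^{jq} \sum_{l \geq j} \nu(U_l)$ to $\sum_l 2^{lq} \nu(U_l)$ up to a $q$-dependent constant. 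The crux of the argument is the greedy $\subseteq$-maximal choice of $U_j$: it is precisely the interplay between the $r$-orthogonality of the innermost $L^r(\omega)$-norm and the subadditivity of $\nu$ that allows a single $U_j$ to fulfill both \eqref{eq:superlevel_interior_level} and the stopping-time-like bound \eqref{eq:maximal_choice_interior_level}, and this coupling of the two structures on $X$ is the only genuinely nontrivial point in the proof.
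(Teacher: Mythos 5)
Your proof is correct and follows essentially the same route as the paper's, which simply defers the first four properties to Proposition 2.1 of \cite{2020arXiv200105903F}, where the same greedy stopping-time selection (maximal sets at dyadic size levels, with the $r$-orthogonality/subadditivity interplay guaranteeing \eqref{eq:maximal_choice_interior_level} and Lemma \ref{thm:atomic_super_level_set_interior_level} giving \eqref{eq:optimal_covering_interior_level}) is carried out. The derivation of \eqref{eq:outer_norm_as_size_and_measure_interior_level} via \eqref{eq:discretized_norm}, Fubini, and the geometric series also matches the paper's argument.
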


\begin{proof}
	The first four statements and their proof appeared already in Proposition 2.1 in \cite{2020arXiv200105903F}. The equivalences in \eqref{eq:outer_norm_as_size_and_measure_interior_level} follow by \eqref{eq:discretized_norm} \eqref{eq:covering_interior_level}, the definition of $V_j$, \eqref{eq:optimal_covering_interior_level}, Fubini, and the bounds for the geometric series.
\end{proof}

Throughout the paper, we use the observations made in this section without necessarily further referring to them. For example, the reader should always have in mind the equivalences in \eqref{eq:discretized_norm} whenever we consider an outer $L^p$ quasi-norm, and the list of properties \eqref{eq:superlevel_interior_level}--\eqref{eq:outer_norm_as_size_and_measure_interior_level} whenever we perform such a decomposition.

\section{Equivalence with norms} \label{sec:equivalence}

In this section, we study the equivalence of double iterated outer $L^p$ quasi-norms with norms uniformly in the finite setting.

First, for every $q,r \in (0,\infty)$, we study the $q$-orthogonality behaviour of the outer $L^q_\nu(\ell^r_\omega)$ quasi-norms of functions supported on disjoint sets. Accordingly, we show decomposition results for functions in the double iterated outer $L^p$ space with respect to a size of the form $\ell^q_\nu(\ell^r_\omega)$. We use them to prove Theorem \ref{thm:collapsing_2_step_iteration_finite}.

After that, for every $p,q,r \in (1,\infty)$, we produce a function $g$ for which we have a good lower bound on the $L^1(X,\omega)$-pairing with $f$ and a good upper bound on the $L^{p'}_\mu(\ell^{q'}_\nu(\ell^{r'}_\omega))$ quasi-norm of $g$. We use it to prove Theorem \ref{thm:Holder_triangular_2_step_iteration_finite}. 

Finally, we conclude the section with the promised class of counterexamples.

\subsection{$q$-orthogonality of the $L^q_\nu(\ell^r_\omega)$ quasi-norm}
We start with a result about the sub- and $q$-superorthogonality of the $L^q_\nu(\ell^r_\omega)$ quasi-norms of functions supported on arbitrary disjoint sets according to the case distinction $q \geq r$ or $q \leq r$. We exhibit counterexamples to the validity of the inequality in the opposite directions in both cases $q > r$ or $q < r$ in Subsection \ref{subsec:counterexamples}.

\begin{lemma} \label{thm:L_q_subsuperorthogonality}
	For every $q \in (0,\infty)$, $r \in (0, \infty]$, there exists a constant $C = C(q,r)$ such that the following properties hold true. 
	
	Let $X$ be a finite set, $\nu$ an outer measure, $\omega$ a measure. Let $\mathcal{A}$ be a collection of pairwise disjoint subsets of $X$. Then, for every function $f$ on $X$, we have
	\begin{gather} \label{eq:L_q_suborthogonality}
	\sum_{A \in \mathcal{A}} \norm{f 1_{A}}^q_{L^q_\nu(\ell^r_\omega)} \leq C \norm{f 1_{B}}^q_{L^q_\nu(\ell^r_\omega)}, \qquad \qquad \textrm{for $q \geq r$,} \\
	\label{eq:L_q_superorthogonality}
	\norm{f 1_{B}}^q_{L^q_\nu(\ell^r_\omega)} \leq C \sum_{A \in \mathcal{A}} \norm{f 1_{A}}^q_{L^q_\nu(\ell^r_\omega)}, \qquad \qquad \textrm{for $q \leq r$,}
	\end{gather}
	where $	B = \bigcup_{A \in \mathcal{A}} A$.
\end{lemma}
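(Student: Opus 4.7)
The plan is to derive both \eqref{eq:L_q_suborthogonality} and \eqref{eq:L_q_superorthogonality} from a double application of the atomic decomposition in Proposition \ref{thm:atomic_decomposition_interior_level}, once to each $f 1_A$ and once to $f 1_B$. For every $A \in \mathcal{A}$ I obtain nested level sets $V^A_{j+1} \subseteq V^A_j$ and annuli $U^A_j = V^A_j \setminus V^A_{j+1}$; I may arrange $V^A_j \subseteq A$ by replacing $V^A_j$ with $V^A_j \cap A$, since this leaves the truncation $f 1_A 1_{(V^A_j)^c}$ unchanged and can only decrease $\nu(V^A_j)$. Let $\tilde V_k, \tilde U_k$ denote the analogous data for $f 1_B$. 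Proposition \ref{thm:atomic_decomposition_interior_level} then yields the discrete representations $\sum_A \norm{f 1_A}^q_{L^q_\nu(\ell^r_\omega)} \sim_{q,r} \sum_{A,j} 2^{jq} \nu(U^A_j)$ and $\norm{f 1_B}^q_{L^q_\nu(\ell^r_\omega)} \sim_{q,r} \sum_k 2^{kq} \nu(\tilde U_k)$, together with the two-sided control $2^{jr} \nu(U^A_j) < \norm{f 1_{U^A_j}}^r_{L^r(\omega)} \leq 2^{(j+1)r} \nu(U^A_j)$ coming from \eqref{eq:superlevel_interior_level} and \eqref{eq:maximal_choice_interior_level}, and the analogous statement for $\tilde U_k$.

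Both families $\{U^A_j\}_{A \in \mathcal{A}, j \in \Z}$ and $\{\tilde U_k\}_{k \in \Z}$ are pairwise disjoint and, in the finite setting, partition $\mathrm{supp}(f 1_B)$, so $L^r(\omega)$-masses split: $\norm{f 1_{U^A_j}}^r_{L^r(\omega)} = \sum_k \norm{f 1_{U^A_j \cap \tilde U_k}}^r_{L^r(\omega)}$, and symmetrically. Each cross term admits two bounds. First, from $\tilde U_k \subseteq (\tilde V_{k+1})^c$ and \eqref{eq:maximal_choice_interior_level} applied to $f 1_B$ at $E = U^A_j$, one obtains $\norm{f 1_{U^A_j \cap \tilde U_k}}^r_{L^r(\omega)} \leq 2^{(k+1)r} \nu(U^A_j)$. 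Second, from $U^A_j \subseteq (V^A_{j+1})^c$ and the same inequality applied to $f 1_A$ at $E = \tilde U_k \cap U^A_j$, one obtains $\norm{f 1_{U^A_j \cap \tilde U_k}}^r_{L^r(\omega)} \leq 2^{(j+1)r} \nu(\tilde U_k \cap U^A_j)$.

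To prove \eqref{eq:L_q_suborthogonality}, which forces $r < \infty$, I rewrite $\sum_{A,j} 2^{jq} \nu(U^A_j) \sim_{q,r} \sum_{A,j,k} 2^{j(q-r)} \norm{f 1_{U^A_j \cap \tilde U_k}}^r_{L^r(\omega)}$ using the first two-sided control and the mass splitting, and I split the triple sum at $j = k + M$ for an integer $M$ to be chosen. On $j \leq k + M$, the inequality $q \geq r$ gives $2^{j(q-r)} \leq 2^{(k+M)(q-r)}$, and summing the cross terms over $A$ and $j$ collapses $\sum_A \norm{f 1_{\tilde U_k \cap A}}^r_{L^r(\omega)} = \norm{f 1_{\tilde U_k}}^r_{L^r(\omega)} \leq 2^{(k+1)r} \nu(\tilde U_k)$, producing $\lesssim_{q,r} 2^{M(q-r)+r} \norm{f 1_B}^q_{L^q_\nu(\ell^r_\omega)}$ after summing over $k$. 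On $j > k + M$, the first cross-term bound and the geometric series $\sum_{k \leq j - M - 1} 2^{(k+1)r} \lesssim_r 2^{(j-M)r}$ produce a circular bound $\lesssim_{q,r} 2^{-Mr} \sum_A \norm{f 1_A}^q_{L^q_\nu(\ell^r_\omega)}$. Choosing $M = M(q,r)$ so that this multiplicative constant is strictly below $1/2$ permits absorption and yields \eqref{eq:L_q_suborthogonality}.

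For \eqref{eq:L_q_superorthogonality} with $r < \infty$, the symmetric expansion $\sum_k 2^{kq} \nu(\tilde U_k) \sim_{q,r} \sum_{k,A,j} 2^{k(q-r)} \norm{f 1_{U^A_j \cap \tilde U_k}}^r_{L^r(\omega)}$, split at $k = j$, requires no absorption: on $k \geq j$, $q \leq r$ gives $2^{k(q-r)} \leq 2^{j(q-r)}$, and the trivial bound $\sum_{k \geq j} \norm{f 1_{U^A_j \cap \tilde U_k}}^r_{L^r(\omega)} \leq \norm{f 1_{U^A_j}}^r_{L^r(\omega)} \leq 2^{(j+1)r} \nu(U^A_j)$ produces $\lesssim_r \sum_A \norm{f 1_A}^q_{L^q_\nu(\ell^r_\omega)}$; on $k < j$, the first cross-term bound together with $\sum_{k < j} 2^{kq} \lesssim_q 2^{jq}$ produces the same. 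The endpoint $r = \infty$ is immediate: $\ell^\infty_\omega(f 1_B)(E) = \max_A \ell^\infty_\omega(f 1_A)(E)$, so any union $\bigcup_A V_A$ of near-optimal sets $V_A$ for $\nu(\ell^\infty_\omega(f 1_A) > \lambda)$ is admissible for $\nu(\ell^\infty_\omega(f 1_B) > \lambda)$, and the subadditivity of $\nu$ finishes the argument. The main obstacle is engineering the absorption in the suborthogonal direction: the threshold $M$ must exploit the geometric decay $2^{-Mr}$ (hence the hypothesis $r > 0$) to outweigh all implicit constants from Proposition \ref{thm:atomic_decomposition_interior_level} and from the intermediate estimates.
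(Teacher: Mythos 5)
Your proof is correct, and for the harder direction \eqref{eq:L_q_suborthogonality} it takes a genuinely different route from the paper. The paper first normalizes to $q=1$ and then, for $r\leq 1$, merges the pieces $\{A_j\colon A\in\mathcal{A}, j\in\Z\}$ sharing a common dyadic size level into a single collection $\{E_l\}$ (using the sublinearity estimates \eqref{eq:size_on_union}, \eqref{eq:sublinearity_size_outer_measure}, which is where $r\leq q$ enters), and then performs a \emph{local} absorption at each level: the $r$-orthogonality identity $\norm{f1_{E_l}}_{L^r}^r-\norm{f1_{E_l\cap(\bigcup_{k\geq l-1}B_k)^c}}_{L^r}^r\geq c\,2^{lr}\nu(E_l)$ shows that all but a fixed fraction of the mass of each $E_l$ is already captured by the decomposition of $f1_B$ one level down. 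You instead keep both atomic decompositions intact, expand $\sum_{A,j}2^{jq}\nu(U^A_j)$ over the cross terms $U^A_j\cap\widetilde U_k$, and perform a \emph{global} absorption: the part $j\leq k+M$ collapses onto $\norm{f1_B}^q$ using $q\geq r$, while the part $j>k+M$ gains a factor $2^{-Mr}$ from the geometric series in $k$ and is absorbed into the left-hand side after choosing $M=M(q,r)$ large. Your absorption requires the a priori finiteness of $\sum_A\norm{f1_A}^q_{L^q_\nu(\ell^r_\omega)}$, which is guaranteed in the finite setting with finite, strictly positive outer measures, so this is not a gap here (though it makes the argument less robust in infinite settings than the paper's subtraction-based version). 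Your treatment of \eqref{eq:L_q_superorthogonality} via the split at $k=j$ and of the endpoint $r=\infty$ via subadditivity of $\nu$ coincides, up to normalization, with the paper's Cases III and I. The small imprecisions (the collections only \emph{cover} the support rather than partition it, and the replacement $U^A_j\mapsto U^A_j\cap A$ needs the quick check that all four properties of Proposition \ref{thm:atomic_decomposition_interior_level} survive) are easily repaired and do not affect the argument.
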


\begin{proof}
	Without loss of generality, we assume $q=1$. In fact, for $ \frac{r}{q} \in (0, \infty]$, we have
	\begin{equation*}
	\norm{f}^q_{L^q_\nu(\ell^{r/q}_\omega)} = \norm{f^q}_{L^1_\nu(\ell^{r/q}_\omega)}.
	\end{equation*}	
	
	{\textbf{Case I: $ q= 1, r= \infty$.}} We have
	\begin{equation} \label{eq:L_infty_size}
	\nu(\ell^\infty_\omega(f) > \lambda) = \nu (\{ x \in X \colon f(x) > \lambda \}). 
	\end{equation}
	Together with the subadditivity of $\nu$, this yields
	\begin{equation*}
	\nu(\ell^\infty_\omega(f 1_B) > \lambda) \leq \sum_{A \in \mathcal{A}} \nu (\ell^\infty_\omega(f 1_A) > \lambda). 
	\end{equation*}
	By integrating in $(0, \infty)$ on both sides, we obtain the desired inequality in \eqref{eq:L_q_superorthogonality}.
	
	{\textbf{Case II: $ q= 1$, $ r \in (0,1]$.}} We start with the following observation. Let $\mathcal{E}$ be a collection of pairwise disjoint sets such that, for every $E \in \mathcal{E}$, we have
	\begin{equation} \label{eq:condition}
	\ell^r_\omega(f)(E) \in (2^j,2^{j+1}].
	\end{equation}
	Together with the $r$-orthogonality of the classical $L^r$ quasi-norms of functions supported on disjoint sets and the subadditivity of $\nu$, this yields
	\begin{equation} \label{eq:size_on_union} 
	\ell^r_\omega(f) \big(\bigcup_{E \in \mathcal{E}} E \big) \geq \big(\nu \big(\bigcup_{E \in \mathcal{E}} E \big)^{-1} \sum_{E \in \mathcal{E}} 2^{jr} \nu(E) \big)^{\frac{1}{r}} > 2^j.
	\end{equation}
	
	Next, by the subadditivity of $\nu$ and by $r \leq 1$, we have
	\begin{equation*}
	\sum_{E \in \mathcal{E}} \nu(E) \leq \big( \nu \big(\bigcup_{E \in \mathcal{E}} E \big)^{-1} \sum_{E \in \mathcal{E}} \nu(E) \big)^{\frac{1}{r}} \nu\big(\bigcup_{E \in \mathcal{E}} E \big).
	\end{equation*}
	Together with \eqref{eq:condition}, this yields
	\begin{equation} \label{eq:sublinearity_size_outer_measure}
	\begin{split}
	\sum_{E \in \mathcal{E}} \ell^r_\omega(f)(E) \nu(E) & \leq 2^{j+1} \sum_{E \in \mathcal{E}} \nu(E) \\
	& \leq C \big( \nu \big( \bigcup_{E \in \mathcal{E}} E \big)^{-1} \sum_{E \in \mathcal{E}} 2^{jr} \nu(E) \big)^{\frac{1}{r}} \nu \big(\bigcup_{E \in \mathcal{E}} E \big) \\
	& \leq C \ell^r_\omega(f) \big(\bigcup_{E \in \mathcal{E}} E \big) \nu \big(\bigcup_{E \in \mathcal{E}} E \big).
	\end{split}
	\end{equation}
	
	Now, let $\{ A_j \colon j \in \Z \}$, $\{ B_j \colon j \in \Z \}$ be the collections associated with the decomposition in Proposition \ref{thm:atomic_decomposition_interior_level} of the functions $f 1_A$, $f 1_B$, respectively. By \eqref{eq:size_on_union} and \eqref{eq:sublinearity_size_outer_measure}, we can pass from the collection $\{ A_j \colon A \in \mathcal{A}, j \in \Z \}$ of pairwise disjoint subsets of $X$ to a collection $\mathcal{E} = \{ E_l \colon l \in \Z \}$ with strictly fewer elements such that
	\begin{gather} \label{eq:control_E_l_size}
	\ell^r_\omega(f)(E_l) \in (2^{l},2^{l+1}], \\ \label{eq:control_E_l_measure}
	\sum_{A \in \mathcal{A}} \sum_{j \in \Z} 2^{j} \nu(A_j) \leq C \sum_{l \in \Z} 2^l \nu(E_l).
	\end{gather}
	
	By the monotonicity of $\nu$, we have
	\begin{equation*}
	\norm{f 1_{E_l\cap \big(\bigcup_{k \geq l - 1} B_k \big)^c}}_{L^r(X,\omega)}^r \leq 2^{(l-1)r} \nu \big(E_l\cap \big(\bigcup_{k \geq l-1} B_k \big)^c \big) \leq 2^{(l-1)r} \nu(E_l).
	\end{equation*}
	Together with \eqref{eq:control_E_l_size}, this yields
	\begin{equation} \label{eq:condition_1}
	\begin{split}
	\sum_{k \geq l-1} \norm{f 1_{E_l\cap B_k}}_{L^r(X,\omega)}^r 
	& = \norm{f 1_{E_l\cap \bigcup_{k \geq l - 1} B_k}}_{L^r(X,\omega)}^r \\
	& = \norm{f 1_{E_l}}_{L^r(X,\omega)}^r - \norm{f 1_{E_l\cap \big(\bigcup_{k \geq l - 1} B_k\big)^c}}_{L^r(X,\omega)}^r \\
	& \geq c 2^{lr} \nu(E_l).
	\end{split}
	\end{equation}
	Therefore, we have
	\begin{align*}
	\sum_{l \in \Z} 2^{l} \nu(E_l)	& \leq C \sum_{l \in \Z} 2^{l(1-r)} \sum_{k \geq l-1} \norm{f 1_{E_l\cap B_k}}_{L^r(X,\omega)}^r \\
	& \leq C \sum_{k \in \Z} 2^{k(1-r)} \sum_{l \leq k+1} \norm{f 1_{E_l\cap B_k}}_{L^r(X,\omega)}^r \\
	& \leq C \sum_{k \in \Z} 2^{k(1-r)} \norm{f 1_{B_k}}_{L^r(X,\omega)}^r \\
	& \leq C \sum_{k \in \Z} 2^{k} \nu(B_k), 
	\end{align*}
	where we used \eqref{eq:condition_1} in the first inequality, $r \leq 1 $ in the second,  and the $r$-orthogonality of the classical $L^r$ quasi-norms of functions supported on disjoint sets in the third. Together with \eqref{eq:outer_norm_as_size_and_measure_interior_level} for the collections $\{ A_j \colon j \in \Z \}$, $\{ B_j \colon j \in \Z \}$, and \eqref{eq:control_E_l_measure}, the previous chain of inequalities yields the desired inequality in \eqref{eq:L_q_suborthogonality}.
	
	{\textbf{Case III: $ q = 1$, $ r \in [1,\infty)$.}} Let $A_j$, $B_j $ be defined as before. We have
	\begin{equation*} 
	\begin{split}
	\sum_{j \in \Z} 2^{j} & \nu(B_j) \leq \sum_{j \in \Z} 2^{j(1-r)} \norm{f 1_{B_j}}_{L^r(X,\omega)}^r \\
	& \leq \sum_{A \in \mathcal{A}} \sum_{k \in \Z} \sum_{j \in \Z} 2^{j(1-r)} \norm{f 1_{A_k \cap B_j}}_{L^r(X,\omega)}^r \\
	& \leq \sum_{A \in \mathcal{A}} \sum_{k \in \Z} \big( 2^{k(1-r)} \sum_{j \geq k}  \norm{ f 1_{A_k \cap B_j}}_{L^r(X,\omega)}^r + \sum_{j < k} 2^{j(1-r)} \norm{f 1_{A_k \cap B_j}}_{L^r(X,\omega)}^r \big) \\
	& \leq C \sum_{A \in \mathcal{A}} \sum_{k \in \Z} \big( 2^{k(1-r)} \norm{f 1_{A_k}}_{L^r(X,\omega)}^r + \sum_{j < k} 2^{j(1-r)} 2^{jr} \nu( A_k \cap B_j) \big) \\
	& \leq C \sum_{A \in \mathcal{A}} \sum_{k \in \Z} \big( 2^{k} \nu(A_k) + \sum_{j < k} 2^{j} \nu( A_k ) \big),
	\end{split}
	\end{equation*}
	where we used the $r$-orthogonality of the classical $L^r$ quasi-norms for functions with disjoint supports in the second and in the fourth inequality, and $r \geq 1 $ in the third. Together with \eqref{eq:outer_norm_as_size_and_measure_interior_level} for the collections $\{ A_j \colon j \in \Z \}$, $\{ B_j \colon j \in \Z \}$, the previous chain of inequalities yields the desired inequality in \eqref{eq:L_q_superorthogonality}.
\end{proof}

We continue with a result about the full $q$-orthogonality of the $L^q_\nu(\ell^r_\omega)$ quasi-norms of functions supported on disjoint sets forming a $\nu$-Carath\'{e}odory collection.

\begin{lemma} \label{thm:L_q_orthogonality}
	For every $q \in (0, \infty)$, $r \in (0, \infty]$, $K \geq 1$, there exist constants $C_1=C_1(q,r,K)$, $C_2=C_2(q,r,K)$ such that the following property holds true. 
	
	Let $X$ be a set, $\nu$ an outer measure, $\omega$ a measure. Let $\mathcal{A}$ be a $\nu$-Carath\'{e}odory collection of pairwise disjoint subsets of $X$. Then, for every function $f$ on $X$, we have
	\begin{equation} \label{eq:L_q_orthogonality}
	C_1^{-1} \norm{f 1_{B}}^q_{L^q_\nu(\ell^r)} \leq \sum_{A \in \mathcal{A}} \norm{f 1_{A}}^q_{L^q_\nu(\ell^r)} \leq C_2 \norm{f 1_{B}}^q_{L^q_\nu(\ell^r)},
	\end{equation}
	where $	B = \bigcup_{A \in \mathcal{A}} A$.
	
\end{lemma}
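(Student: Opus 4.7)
I would prove both inequalities in \eqref{eq:L_q_orthogonality} by working directly with the super level measures $\nu(\ell^r_\omega(\cdot) > \lambda)$ and passing to the $L^q_\nu(\ell^r_\omega)$ quasi-norm via the discretized identity \eqref{eq:discretized_norm}. Without loss of generality I take $f \geq 0$, so that pointwise estimates are available.

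For the upper bound $\sum_A \norm{f 1_A}^q_{L^q_\nu(\ell^r_\omega)} \leq C_2 \norm{f 1_B}^q_{L^q_\nu(\ell^r_\omega)}$, the plan is as follows. For every $k \in \Z$ and $\varepsilon > 0$, I would take a nearly optimal covering set $V_k$ for $\nu(\ell^r_\omega(f 1_B) > 2^k)$, which may be assumed to satisfy $V_k \subseteq B$ since $f 1_B$ vanishes outside $B$. Because $f 1_A 1_{(V_k \cap A)^c} \leq f 1_B 1_{V_k^c}$ pointwise, the set $V_k \cap A$ is itself an admissible covering for $\nu(\ell^r_\omega(f 1_A) > 2^k)$. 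Summing over $A$ and applying the Carath\'{e}odory property \eqref{eq:K_Carath\'{e}odory} with $U = V_k$ then gives $\sum_A \nu(\ell^r_\omega(f 1_A) > 2^k) \leq K \nu(V_k) \leq (1+\varepsilon) K \nu(\ell^r_\omega(f 1_B) > 2^k)$; letting $\varepsilon \to 0$, multiplying by $2^{kq}$, summing in $k$, and invoking \eqref{eq:discretized_norm} yields the desired bound with $C_2 \lesssim_{q,r} K$.

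For the lower bound $\norm{f 1_B}^q_{L^q_\nu(\ell^r_\omega)} \leq C_1 \sum_A \norm{f 1_A}^q_{L^q_\nu(\ell^r_\omega)}$, the roles of $A$ and $B$ swap, but now a threshold shift is needed. Setting $c_K = K^{-1/r}$ (with the convention $c_K = 1$ when $r = \infty$), for every $k$ and every $A$ I would pick a nearly optimal set $V_A$ realizing $\nu(\ell^r_\omega(f 1_A) > c_K 2^k)$ and take $V = \bigcup_A V_A$. The crucial check is that $V$ is admissible for $\nu(\ell^r_\omega(f 1_B) > 2^k)$: for any test set $E$, the $r$-orthogonality of $L^r(\omega)$ on the disjoint supports $A$ combined with the Carath\'{e}odory property gives
\[ \norm{f 1_B 1_{V^c} 1_E}^r_{L^r(\omega)} = \sum_A \norm{f 1_A 1_{V_A^c} 1_{E \cap A}}^r_{L^r(\omega)} \leq c_K^r 2^{kr} \sum_A \nu(E \cap A) \leq c_K^r K 2^{kr} \nu(E) = 2^{kr} \nu(E). \]
Hence $\nu(\ell^r_\omega(f 1_B) > 2^k) \leq \sum_A \nu(V_A)$; passing to the continuous form of \eqref{eq:discretized_norm} and performing the change of variables $\lambda \mapsto c_K \lambda$ absorbs the $c_K$ factor into $c_K^{-q} = K^{q/r}$, producing $C_1 \lesssim_{q,r} K^{q/r}$.

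The main obstacle is calibrating the threshold shift $c_K$ in the lower-bound direction so that the Carath\'{e}odory defect of $\nu$ and the $r$-homogeneity of $L^r(\omega)$ combine into a clean admissibility check for $V$; once this coupling is spotted, the rest is standard covering-lemma bookkeeping. The case $r = \infty$ is actually easier, because $\nu(\ell^\infty_\omega(f) > \lambda) = \nu(\{f > \lambda\})$ reduces both inequalities to direct manipulations of $\nu$-measures of level sets, with the upper bound using the Carath\'{e}odory property and the lower bound needing only outer-measure subadditivity on the disjoint family $\mathcal{A}$.
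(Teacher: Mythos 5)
Your proposal is correct and follows essentially the same route as the paper's proof: both directions are reduced to level-set inequalities for the super level measure, the bound $\sum_A \nu(\ell^r_\omega(f1_A)>\lambda) \leq K\nu(\ell^r_\omega(f1_B)>\lambda)$ comes from restricting a near-optimal covering set for $f1_B$ to each $A$ and applying the Carath\'{e}odory property, and the reverse bound comes from uniting near-optimal covering sets for the $f1_A$ and verifying admissibility via $r$-orthogonality plus Carath\'{e}odory, with your threshold shift $c_K=K^{-1/r}$ being exactly the paper's $K^{1/r}\lambda$ written on the other side. The only cosmetic difference is that the paper normalizes to $q=1$ and integrates in $\lambda$ rather than using the dyadic sum.
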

\begin{proof}
	As before, without loss of generality, we assume $q=1$.
	
	Expanding the definition of the outer $L^1_\nu(\ell^r_\omega)$ quasi-norms in \eqref{eq:L_q_orthogonality}, we have
	\begin{align*}
	\norm{f 1_{B}}_{L^1_\nu(\ell^r_\omega)} & = \int_{0}^\infty \nu(\ell^r_\omega(f 1_{B}) > \lambda ) \diff \lambda, \\
	\sum_{A \in \mathcal{A}} \norm{f 1_{ A}}_{L^1_\nu(\ell^r_\omega)} & = \int_{0}^\infty \sum_{A \in \mathcal{A}} \nu(\ell^r_\omega(f 1_{A}) > \lambda ) \diff \lambda.
	\end{align*}
	To show the desired inequalities, it is enough to prove that there exist constants $c=c(r,K)$, $C=C(r,K)$ such that, for every $\lambda > 0$, we have
	\begin{equation} \label{eq:L_q_orthogonality_level_set}
	\nu(\ell^r_\omega(f 1_{B}) > c \lambda ) \leq \sum_{A \in \mathcal{A}} \nu(\ell^r_\omega(f 1_{A}) > \lambda ) \leq C \nu(\ell^r_\omega(f 1_{B}) > \lambda ).
	\end{equation}
	By integrating in $(0, \infty)$ on both sides, we obtain the desired inequalities in \eqref{eq:L_q_orthogonality}.
	
	{\textbf{Case I: $ r = \infty$.}} By the subadditivity of $\nu$ and the $\nu$-Carath\'{e}odory condition \eqref{eq:K_Carath\'{e}odory}, we have
	\begin{equation*}
	\nu (\{ x \in B \colon f(x) > \lambda \}) \leq \sum_{A \in \mathcal{A}} \nu (\{ x \in A \colon f(x) > \lambda \}) \leq K \nu (\{ x \in B \colon f(x) > \lambda \}).
	\end{equation*}
	Together with \eqref{eq:L_infty_size}, this yields the desired inequalities in \eqref{eq:L_q_orthogonality_level_set}.
	
	{\textbf{Case II: $ r \in (0,\infty)$.}}	We start with the first inequality in \eqref{eq:L_q_orthogonality_level_set}.	Let $\varepsilon > 0$. For every $A \in \mathcal{A}$, let $V(A,\lambda,\varepsilon)$ be an optimal set associated with the super level measure $\nu(\ell^r_\omega(f 1_{A}) > \lambda )$ up to the multiplicative constant $(1+\varepsilon)$, namely
	\begin{gather} \label{eq:condition_super_level_1_size}
	\norm{ f 1_A 1_{V(A,\lambda,\varepsilon)^c}}_{L^\infty_\nu(\ell^r_\omega)} \leq \lambda, \\ \label{eq:condition_super_level_1_measure}
	(1 + \varepsilon) \nu(\ell^r_\omega(f 1_{A}) > \lambda ) \geq \nu \big( V(A,\lambda,\varepsilon)\big),
	\end{gather}
	and set 
	\begin{equation*}
	V = \bigcup_{A \in \mathcal{A}} V(A,\lambda,\varepsilon).
	\end{equation*}
	For every $U \subseteq X$, we have
	\begin{align*}
	\big(\ell^r_\omega ( f 1_{B} 1_{V^c}) (U)\big)^r & \leq \nu(U)^{-1} \sum_{A \in \mathcal{A}} \norm{f 1_{A} 1_{V(A,\lambda,\varepsilon)^c} 1_U}^r_{L^r(X,\omega)} \\
	& \leq \nu(U)^{-1} \sum_{A \in \mathcal{A}} \lambda^{r} \nu(U \cap A ) \\
	& \leq K \lambda^r,
	\end{align*}
	where we used the $r$-orthogonality of the classical $L^r$ quasi-norms of functions with disjoint support in the first inequality, \eqref{eq:condition_super_level_1_size} in the second, and the $\nu$-Carath\'{e}odory condition \eqref{eq:K_Carath\'{e}odory} in the third. Together with the subadditivity of $\nu$ and \eqref{eq:condition_super_level_1_measure}, the previous chain of inequalities yields
	\begin{equation*} 
	\nu(\ell^r_\omega(f 1_{B}) > K^{1/r} \lambda ) \leq  (1+\varepsilon) \sum_{A \in \mathcal{A}} \nu(\ell^r_\omega(f 1_{A}) > \lambda ).
	\end{equation*}
	By taking $\varepsilon$ arbitrarily small, we obtain the desired first inequality in \eqref{eq:L_q_orthogonality_level_set}.
	
	We turn to the second inequality in \eqref{eq:L_q_orthogonality_level_set}. Let $\varepsilon > 0$. Let $V(\lambda,\varepsilon)$ be an optimal set associated with the super level measure $\nu(\ell^r_\omega(f 1_{B}) > \lambda )$ up to the multiplicative constant $(1+\varepsilon)$, namely
	\begin{gather} \label{eq:condition_super_level_2_size}
	\norm{ f 1_{V(\lambda,\varepsilon)^c}}_{L^\infty_\nu(\ell^r_\omega)} \leq \lambda, \\ \label{eq:condition_super_level_2_measure}
	(1+ \varepsilon) \nu(\ell^r_\omega(f 1_{B}) > \lambda ) \geq \nu \big(V(\lambda,\varepsilon)\big).
	\end{gather}
	For every $U \subseteq X$, we have
	\begin{equation*}
	\big (\ell^r_\omega ( f 1_{A } 1_{V(\lambda,\varepsilon)^c}) (U) \big)^r \leq \nu(U)^{-1} \norm{f 1_{B} 1_{V(\lambda,\varepsilon)^c} 1_U}^r_{L^r(X,\omega)} \leq \lambda^r,
	\end{equation*}
	where we used the monotonicity of the classical $L^r$ quasi-norms in the first inequality, and \eqref{eq:condition_super_level_2_size} in the second. Together with the $\nu$-Carath\'{e}odory condition \eqref{eq:K_Carath\'{e}odory} and \eqref{eq:condition_super_level_2_measure}, the previous chain of inequalities yields
	\begin{equation*}
	\sum_{A \in \mathcal{A}} \nu(\ell^r_\omega(f 1_{A}) > \lambda ) \leq \sum_{A \in \mathcal{A}} \nu(V(\lambda) \cap A) \leq K \nu(\ell^r_\omega(f 1_{B}) > \lambda ).
	\end{equation*}
	By taking $\varepsilon$ arbitrarily small, we obtain the desired second inequality in \eqref{eq:L_q_orthogonality_level_set}.
\end{proof}

\subsection{Decomposition for double iterated outer $L^p$ spaces.}
We start with the result corresponding to Lemma \ref{thm:atomic_super_level_set_interior_level} in the case of a size given by a single iterated outer $L^p$ quasi-norm. The proof relies on the $q$-suborthogonality of the $L^q_\nu(\ell^r_\omega)$ quasi-norms of functions with disjoint supports as stated in \eqref{eq:L_q_suborthogonality} or in the second inequality in \eqref{eq:L_q_orthogonality}. Therefore, according to the relation between the exponents $q,r$, we allow the constants to depend on the parameter associated with the $\nu$-Carath\'{e}odory collection formed by the disjoint sets.   

\begin{lemma} \label{thm:atomic_super_level_set_exterior_level}
	For every $q \in (0, \infty)$, $r \in (0, \infty]$, $K \geq 1$, $N \geq 1$, there exist constants $C=C(q,r,K,N)$, $c=c(q,r,K,N)$ such that the following property holds true. 
		
	Let $X$ be a set, $\mu, \nu$ outer measures, and $\omega$ a measure. Let $f \in L^\infty_\mu(\ell_\nu^q(\ell_\omega^r))$ be a function on $X$, let $k \in \Z$ satisfy
	\begin{equation} \label{eq:bound_L_infty}
	\norm{f}_{L^\infty_\mu(\ell_\nu^q(\ell_\omega^r))} \in (2^k, 2^{k+1}],
	\end{equation}
	and let $\mathcal{A}$ be a $\nu$-Carath\'{e}odory collection of subsets of $X$ such that, for every $A \in \mathcal{A}$,
	\begin{equation} \label{eq:lower_bound_condition}
	\norm{f 1_{A}}_{L_\nu^q(\ell_\omega^r)}^q > 2^{(k-N)q} \mu(A).
	\end{equation}
	Then we have
	\begin{equation} \label{eq:atomic_super_level_set_exterior_level}
	\sum_{A \in \mathcal{A}} \mu(A) \leq C \mu(\ell^q_\nu(\ell^r_\omega)(f) > c 2^k).
	\end{equation}
	
	If $q \geq r$ and $X$ is finite, the constants $C,c$ do not depend on $K$.
\end{lemma}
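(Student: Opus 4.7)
The plan is to mimic the proof of Lemma \ref{thm:atomic_super_level_set_interior_level}, splitting each $A \in \mathcal{A}$ (or their union) against an almost-optimal set $V$ for the super level measure appearing on the right-hand side of \eqref{eq:atomic_super_level_set_exterior_level}, and then reducing to the orthogonality results proved in Lemma \ref{thm:L_q_subsuperorthogonality} and Lemma \ref{thm:L_q_orthogonality}. Concretely, fix $\varepsilon > 0$ and a constant $c > 0$ to be chosen, and let $V = V(c 2^k, \varepsilon)$ satisfy
\begin{gather*}
\norm{f 1_{V^c}}_{L^\infty_\mu(\ell^q_\nu(\ell^r_\omega))} \leq c 2^k, \\
(1+\varepsilon)\mu(\ell^q_\nu(\ell^r_\omega)(f) > c 2^k) \geq \mu(V).
\end{gather*}
Unwrapping the outermost $L^\infty_\mu$ size gives, for every $U \subseteq X$, the key pointwise estimates $\norm{f 1_{V^c} 1_U}^q_{L^q_\nu(\ell^r_\omega)} \leq (c 2^k)^q \mu(U)$, while \eqref{eq:bound_L_infty} gives $\norm{f 1_U}^q_{L^q_\nu(\ell^r_\omega)} \leq 2^{(k+1)q} \mu(U)$.

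In the case $q \leq r$, I would work one $A \in \mathcal{A}$ at a time, splitting $f 1_A = f 1_{A\cap V} + f 1_{A \cap V^c}$ and using the $q$-superorthogonality \eqref{eq:L_q_superorthogonality} of Lemma \ref{thm:L_q_subsuperorthogonality}:
\begin{equation*}
2^{(k-N)q}\mu(A) < \norm{f 1_A}^q_{L^q_\nu(\ell^r_\omega)} \leq C \norm{f 1_{A \cap V}}^q_{L^q_\nu(\ell^r_\omega)} + C (c 2^k)^q \mu(A).
\end{equation*}
Choosing $c$ small enough to absorb the last term on the right yields $\norm{f 1_{A\cap V}}^q_{L^q_\nu(\ell^r_\omega)} \gtrsim 2^{kq} \mu(A)$. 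The subcollection $\{A \cap V \colon A \in \mathcal{A}\}$ inherits the $\nu$-Carath\'{e}odory property (with the same parameter $K$) from $\mathcal{A}$, so summing in $A$ and applying the upper bound in \eqref{eq:L_q_orthogonality} of Lemma \ref{thm:L_q_orthogonality} with the $L^\infty_\mu$ bound on $V$ produces
\begin{equation*}
2^{kq} \sum_{A \in \mathcal{A}} \mu(A) \lesssim_K \norm{f 1_V}^q_{L^q_\nu(\ell^r_\omega)} \leq 2^{(k+1)q} \mu(V) \lesssim_K 2^{kq} \mu(\ell^q_\nu(\ell^r_\omega)(f) > c 2^k),
\end{equation*}
which gives \eqref{eq:atomic_super_level_set_exterior_level} with a $K$-dependent constant.

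In the case $q \geq r$, the goal is to avoid any appeal to the Carath\'{e}odory condition, so I would instead sum the hypothesis \eqref{eq:lower_bound_condition} across $\mathcal{A}$ first and invoke the $q$-suborthogonality \eqref{eq:L_q_suborthogonality} of Lemma \ref{thm:L_q_subsuperorthogonality}, obtaining, with $U = \bigcup_{A \in \mathcal{A}} A$,
\begin{equation*}
2^{(k-N)q} \sum_{A \in \mathcal{A}} \mu(A) < \sum_{A \in \mathcal{A}} \norm{f 1_A}^q_{L^q_\nu(\ell^r_\omega)} \leq C \norm{f 1_U}^q_{L^q_\nu(\ell^r_\omega)}.
\end{equation*}
Splitting $f 1_U = f 1_{U \cap V} + f 1_{U \cap V^c}$ by the quasi-triangle inequality with a $(q,r)$-dependent constant, and using respectively the global and the $V^c$ versions of the $L^\infty_\mu$ bound, yields
\begin{equation*}
\norm{f 1_U}^q_{L^q_\nu(\ell^r_\omega)} \leq C 2^{(k+1)q} \mu(V) + C (c 2^k)^q \mu(U) \leq C 2^{(k+1)q} \mu(V) + C (c 2^k)^q \sum_{A \in \mathcal{A}} \mu(A),
\end{equation*}
where the subadditivity of $\mu$ combined with the finiteness of $X$ justifies the second step. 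Choosing $c$ small enough to absorb the last term into the left-hand side gives \eqref{eq:atomic_super_level_set_exterior_level} with a constant depending only on $q,r,N$ and, letting $\varepsilon \to 0$, the claim.

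The only subtle point is the choice of sub- versus superorthogonality in accordance with the position of $q$ relative to $r$, and the observation that the case $q \geq r$ allows us to absorb the $V^c$-term through finite subadditivity of $\mu$ rather than through a Carath\'{e}odory-driven orthogonality; this is exactly what removes the dependence of $C,c$ on $K$ in that regime.
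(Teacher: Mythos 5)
Your argument is correct and follows essentially the same route as the paper's proof: take an almost-optimal set for the super level measure $\mu(\ell^q_\nu(\ell^r_\omega)(f)>c2^k)$, use the quasi-triangle inequality together with the smallness of $f$ off that set to absorb the complementary contribution by choosing $c=c(q,r,N)$ small, and pass between $\sum_A$ and the union via the Carath\'{e}odory-based suborthogonality of Lemma \ref{thm:L_q_orthogonality} in general (hence the $K$-dependence) versus \eqref{eq:L_q_suborthogonality} of Lemma \ref{thm:L_q_subsuperorthogonality} when $q\geq r$ (hence the $K$-independence). One small remark: in your first case the appeal to \eqref{eq:L_q_superorthogonality} for the two-element partition $\{A\cap V, A\cap V^c\}$ is nothing but the quasi-triangle inequality for two summands, which is valid for all $q,r$ and arbitrary $X$; stated that way, your first argument proves the general ($K$-dependent) claim for every $q,r$ exactly as the paper's Case I does, whereas as literally written the case $q>r$ on an infinite $X$ falls between your two cases, since Lemma \ref{thm:L_q_subsuperorthogonality} and your final absorption step both require $X$ finite.
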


\begin{proof}	
	{\textbf{Case I: arbitrary $q,r$.}} Let $\varepsilon > 0$. Let $F(c2^{k},\varepsilon)$ be an optimal set associated with the super level measure $\mu(\ell^q_\nu(\ell^r_\omega)(f)> c2^{k})$ up to the multiplicative constant $(1+\varepsilon)$, namely
	\begin{gather} \label{eq:condition_super_level_exterior_level_size}
	\norm{ f 1_{F(c2^{k}, \varepsilon)^c}}_{L^\infty_\mu(\ell^q_\nu(\ell^r_\omega))} \leq c2^{k}, \\ \label{eq:condition_super_level_exterior_level_measure}
	(1+\varepsilon) \mu(\ell^q_\nu(\ell^r_\omega)(f)> c2^{k}) \geq \mu\big(F(c2^{k}, \varepsilon)\big),
	\end{gather}  
	where $c$ will be fixed later. For $B = \bigcup_{A \in \mathcal{A}} A$, we have
	\begin{align*}
	\mu\big(F(c2^{k})\big) & \geq 2^{-(k+1)q}  \norm{ f 1_{F(c 2^{k})} 1_{B}}^q_{L^q_\nu(\ell^r_\omega)} \\
	& \geq  C 2^{-(k+1)q} \sum_{A \in \mathcal{A}} \norm{f 1_{F(c 2^{k})} 1_{A}}^q_{L^q_\nu(\ell^r_\omega)} \\
	& \geq  C 2^{-(k+1)q} \sum_{A \in \mathcal{A}} (C_{\Delta}^{-1} \norm{f 1_{A}}_{L^q_\nu(\ell^r_\omega)} - \norm{f 1_{A \setminus F(c 2^{k})}}_{L^q_\nu(\ell^r_\omega)})^q \\
	& \geq  C 2^{-(k+1)q} \sum_{A \in \mathcal{A}} ( C_{\Delta}^{-1} 2^{k-N} - c 2^{k})^q \mu(A),
	\end{align*}
	where we used the monotonicity of $\mu$ and \eqref{eq:bound_L_infty} in the first inequality, Lemma \ref{thm:L_q_orthogonality} applied to the $\nu$-Carath\'{e}odory collection $\mathcal{A}$ in the second, the quasi-triangle inequality for the outer $L^p$ quasi-norm of two summands in the third, and \eqref{eq:lower_bound_condition} and \eqref{eq:condition_super_level_exterior_level_size} in the fourth. By choosing 
	\begin{equation*}
	c=(2C_{\Delta})^{-1},
	\end{equation*}
	and taking $\varepsilon$ arbitrarily small, the previous chain of inequalities together with \eqref{eq:condition_super_level_exterior_level_measure} yields the desired inequality in \eqref{eq:atomic_super_level_set_exterior_level}.
	
	{\textbf{Case II: $q \geq r$.}} We use \eqref{eq:L_q_suborthogonality} from Lemma \ref{thm:L_q_subsuperorthogonality} applied to every arbitrary collection $\mathcal{A}$ of pairwise disjoint subsets of $X$ in place of Lemma \ref{thm:L_q_orthogonality}.
\end{proof}
	
We are now ready to state and prove a series of decomposition results for functions in the outer $L^p$ space with respect to a size of the form $\ell^q_\nu(\ell^r_\omega)$. Although the statements, as well as the proofs, are similar, we provide them separately in order to highlight the differences. The proofs rely on the selection of disjoint subsets where the size achieves the levels $\Psi^k$, for a certain $\Psi > 1$. The key ingredient in order to perform such a selection exhaustively at each step is the $q$-suborthogonality of the $L^q_\nu(\ell^r_\omega)$ quasi-norms of functions supported on certain disjoint sets. Therefore, according to the relation between the exponents $q,r$, we require the canopy condition \ref{def:parent_collection_nesting}, and we allow the constants to depend on the parameters associated with it. 

We start with a decomposition result in the full range of exponents under the assumption of the canopy condition \ref{def:parent_collection_nesting} on the setting.
\begin{proposition} \label{thm:atomic_decomposition_exterior_level_1_no_partition}
	For every $p ,q, r \in (0, \infty)$, $\Phi,K \geq 1$, there exist constants $C=C(p,q,r,\Phi,K)$, $c=c(p,q,r,\Phi,K)$ such that the following property holds true.
	
	Let $X$ be a finite set, $\mu,\nu$ outer measures, $\omega$ a measure, and $\mathcal{C}$ a $\mu$-covering function such that  $(X,\mu,\nu,\mathcal{C})$ satisfies	the canopy condition \ref{def:parent_collection_nesting}. For every function $f \in L^p_\mu(\ell^q_\nu(\ell^r_\omega))$ on $X$, there exists a collection $\{ E_{k} \colon k \in \Z \}$ of pairwise disjoint subsets of $X$ such that, if we set 
	\begin{equation*}
	F_k = \mathbf{B}_{\mathcal{C}} \big( \bigcup_{l \geq k} E_{l} \big),
	\end{equation*}
	then, for every $k \in \Z$, we have
	\begin{gather} 
	\label{eq:superlevel_exterior_level_1_no_partition}
	\ell^q_\nu(\ell^r_\omega) (f 1_{F_{k+1}^c})(E_{k}) > c 2^{k},  \qquad \qquad \text{when $E_{k} \neq \varnothing$,} \\
	\label{eq:maximal_choice_exterior_level_1_no_partition}
	\norm{f 1_{F_k^c}}_{L^\infty_\mu(\ell^q_\nu(\ell^r_\omega))} \leq 2^{k}, \\
	\label{eq:covering_exterior_level_1_no_partition}
	\mu(\ell^q_\nu(\ell^r_\omega) (f) > 2^{k}) \leq \mu( F_k ) , \\
	\label{eq:optimal_covering_exterior_level_1_no_partition}
	\mu (E_{k}) \leq C \mu(\ell^q_\nu(\ell^r_\omega) (f) > c 2^{k}).
	\end{gather}
	In particular, we have
	\begin{equation} \label{eq:outer_norm_as_size_and_measure_1_no_partition}
	\norm{f}^p_{L^p_\mu(\ell^q_\nu(\ell^r_\omega))} \sim_{p,q,r,\Phi,K}  \sum_{k \in \Z} 2^{kp} \mu (E_{k}) \sim_{p,q,r,\Phi,K} \sum_{k \in \Z} 2^{kp} \sum_{l \geq k} \mu (E_{l}).
	\end{equation}
\end{proposition}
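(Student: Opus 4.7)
The plan is to mimic the proof of Proposition \ref{thm:atomic_decomposition_interior_level}, replacing the role of the outer measure $\nu$ and arbitrary pairwise disjoint atoms there by the outer measure $\mu$ and $\nu$-Carath\'{e}odory atoms whose $\mathbf{B}_{\mathcal{C}}$-image is controlled. The key step will be to perform, at each level $k \in \Z$, a greedy selection of subsets of $X$ on which the outer size $\ell^q_\nu(\ell^r_\omega)(f 1_{F_{k+1}^c})$ exceeds $2^k$, in such a way that all the subsets collected across levels $\geq k$ together form a single $\nu$-Carath\'{e}odory collection with parameter $K$. The canopy condition \ref{def:parent_collection_nesting} is exactly what allows this greedy construction to maintain the $\nu$-Carath\'{e}odory structure.

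Concretely, by \eqref{eq:Holder_containment} the quantity $\norm{f}_{L^\infty_\mu(\ell^q_\nu(\ell^r_\omega))}$ is finite, so I would pick $k_{\max} \in \Z$ with $\norm{f}_{L^\infty_\mu(\ell^q_\nu(\ell^r_\omega))} \leq 2^{k_{\max}+1}$ and set $\mathcal{A}_k = E_k = \varnothing$ for $k > k_{\max}$. Then, inductively from large $k$ to small $k$, assuming a $\nu$-Carath\'{e}odory collection $\mathcal{A}_{>k} = \bigcup_{l > k} \mathcal{A}_l$ of pairwise disjoint subsets has already been produced with parameter $K$, define $F_{k+1} = \mathbf{B}_{\mathcal{C}}\big( \bigcup_{l > k} E_l \big)$ and run a greedy procedure at level $k$: starting with $\mathcal{A}^{(0)} = \mathcal{A}_{>k}$, at each step $n \geq 1$ search for a subset $D$ of $X$ disjoint from $\mathbf{B}_{\mathcal{C}}\big( \bigcup_{A \in \mathcal{A}^{(n-1)}} A \big)$ satisfying
\begin{equation*}
\ell^q_\nu(\ell^r_\omega)(f 1_{F_{k+1}^c})(D) > 2^k.
\end{equation*}
If such a $D$ exists, add it to obtain $\mathcal{A}^{(n)}$, which by the canopy condition remains $\nu$-Carath\'{e}odory with the same parameter $K$; otherwise, terminate. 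Since $X$ is finite and successive additions are pairwise disjoint, the procedure terminates after some step $N$. I would then set $\mathcal{A}_k = \mathcal{A}^{(N)} \setminus \mathcal{A}_{>k}$ and $E_k = \bigcup_{A \in \mathcal{A}_k} A$.

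To verify the stated properties: \eqref{eq:superlevel_exterior_level_1_no_partition} follows from the first inequality in \eqref{eq:L_q_orthogonality} applied to the $\nu$-Carath\'{e}odory collection $\mathcal{A}_k$, together with the selection criterion and the subadditivity of $\mu$. For \eqref{eq:maximal_choice_exterior_level_1_no_partition}, the termination condition at step $N$ gives $\ell^q_\nu(\ell^r_\omega)(f 1_{F_{k+1}^c})(D) \leq 2^k$ for every $D$ disjoint from $F_k = \mathbf{B}_{\mathcal{C}}(\bigcup_{l \geq k} E_l)$, and the inclusion $F_k^c \subseteq F_{k+1}^c$, the identity $f 1_{F_k^c} = (f 1_{F_{k+1}^c}) 1_{F_k^c}$, and monotonicity of $\mu$ together promote this into $\ell^q_\nu(\ell^r_\omega)(f 1_{F_k^c})(U) \leq 2^k$ for every $U \subseteq X$. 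Property \eqref{eq:covering_exterior_level_1_no_partition} is then immediate from the definition \eqref{eq:super_level_measure} with $F_k$ as admissible set. Property \eqref{eq:optimal_covering_exterior_level_1_no_partition} will follow from Lemma \ref{thm:atomic_super_level_set_exterior_level} applied to $g_k = f 1_{F_{k+1}^c}$ and $\mathcal{A}_k$: when $\mathcal{A}_k \neq \varnothing$, the selection criterion and \eqref{eq:maximal_choice_exterior_level_1_no_partition} at level $k+1$ place $\norm{g_k}_{L^\infty_\mu(\ell^q_\nu(\ell^r_\omega))}$ inside $(2^k, 2^{k+1}]$, the selection criterion provides the lower bound on $\norm{g_k 1_A}^q_{L^q_\nu(\ell^r_\omega)}$ required in \eqref{eq:lower_bound_condition}, and subadditivity of $\mu$ passes from $\sum_{A \in \mathcal{A}_k} \mu(A)$ to $\mu(E_k)$.

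The two equivalences in \eqref{eq:outer_norm_as_size_and_measure_1_no_partition} will then follow from \eqref{eq:discretized_norm}, the already-established \eqref{eq:covering_exterior_level_1_no_partition} and \eqref{eq:optimal_covering_exterior_level_1_no_partition}, the parent bound $\mu(F_k) \leq \Phi \sum_{l \geq k} \mu(E_l)$, and a routine Fubini and geometric series manipulation, mirroring the corresponding step in Proposition \ref{thm:atomic_decomposition_interior_level}. The main delicate point throughout is the persistence of the $\nu$-Carath\'{e}odory structure across levels, which is exactly what the canopy condition is engineered to supply and which brings the dependence of the constants on $K$ (through Lemmas \ref{thm:L_q_orthogonality} and \ref{thm:atomic_super_level_set_exterior_level}) and on $\Phi$ (through the parent bound).
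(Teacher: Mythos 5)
Your proposal is correct and follows essentially the same strategy as the paper's proof: a backward recursion over the levels $2^k$ with an inner greedy selection of sets on which the size exceeds $2^k$, kept $\nu$-Carath\'{e}odory via the canopy condition, followed by the same applications of Lemma \ref{thm:L_q_orthogonality} for \eqref{eq:superlevel_exterior_level_1_no_partition} and Lemma \ref{thm:atomic_super_level_set_exterior_level} for \eqref{eq:optimal_covering_exterior_level_1_no_partition}. The only (harmless) variation is that you maintain the cumulative collection across all levels as $\nu$-Carath\'{e}odory and exclude $\mathbf{B}_{\mathcal{C}}$ of the full union, whereas the paper only keeps the per-level collection $\{E_{k,n}\}_n$ Carath\'{e}odory and excludes $F_{k+1} \cup \mathbf{B}_{\mathcal{C}}\big(\bigcup_{m<n} E_{k,m}\big)$; both choices yield the stated properties.
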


\begin{proof}
	By \eqref{eq:Holder_containment}, we have $f \in L^\infty_\mu(\ell^q_\nu(\ell^r_\omega))$. We define the collection $\{ E_k \colon k \in \Z \}$ by a backward recursion on $k \in \Z$. For $k$ large enough such that
	\begin{equation*}
	\norm{f}_{L^\infty_\mu(\ell^q_\nu(\ell^r_\omega))} \leq 2^k,
	\end{equation*}
	we set $E_k$ to be empty. Now, we fix $k$ and assume to have selected $E_{l}$ for every $l > k$. In particular, $F_{k+1}$ is already well-defined. If there exists no subset $A$ of $X$ disjoint from $F_{k+1}$ such that
	\begin{equation} \label{eq:selection_condition_exterior_level}
	\ell^q_\nu(\ell^r_\omega)(f)(A) > 2^k,
	\end{equation}
	then we set $E_k$ to be empty, and proceed the recursion with $k-1$. 
	
	If there exists a subset $A$ of $X$ disjoint from $F_{k+1}$ satisfying \eqref{eq:selection_condition_exterior_level},
	we define an auxiliary $\nu$-Carath\'{e}odory collection $\{ E_{k,n} \colon n \in \N_k \}$ of subsets of $X$ by a forward recursion on $n \in \N_k$. The existence of $A$ provides the starting point $E_{k,1}$ for the recursion. Now, we fix $n$, assume to have selected $E_{k,m}$ for every $m \in \N, m < n$ forming a $\nu$-Carath\'{e}odory collection, and set 
	\begin{equation*}
	F_{k,n-1} = F_{k+1} \cup \mathbf{B}_{\mathcal{C}} \big( \bigcup_{m < n} E_{k,m} \big).
	\end{equation*}
	If there exists a subset $A$ of $X$ disjoint from $F_{k,n-1}$ satisfying \eqref{eq:selection_condition_exterior_level}, then we choose such a set $A$ to be $E_{k,n}$. By the canopy condition \ref{def:parent_collection_nesting}, we have that the collection $ \{ E_{k,m} \colon m \leq n \}$ is still $\nu$-Carath\'{e}odory. If no $A$ satisfying \eqref{eq:selection_condition_exterior_level} exists, we set $\N_k$ to be $\{ 1, \dots, n-1 \}$, stop the forward recursion, set 
	\begin{equation*}
	E_k = \bigcup_{n \in \N_k} E_{k,n}, 
	\end{equation*}
	and proceed the backward recursion with $k-1$. 
	
	By construction, we have \eqref{eq:maximal_choice_exterior_level_1_no_partition} and \eqref{eq:covering_exterior_level_1_no_partition} for every $k \in \Z$. By construction and Lemma \ref{thm:L_q_orthogonality} applied to the $\nu$-Carath\'{e}odory collection $\{ E_{k,n} \colon n \in \N_k \}$, we have \eqref{eq:superlevel_exterior_level_1_no_partition} for every nonempty $E_{k}$. To prove \eqref{eq:optimal_covering_exterior_level_1_no_partition}, we observe that for every $k$ such that $2^k$ is greater than the $L^\infty_\mu(\ell^q_\nu(\ell^r_\omega))$ quasi-norm of $f$, the statement is true. For every other $k$, the proof follows by construction and Lemma \ref{thm:atomic_super_level_set_exterior_level}.
	
	The equivalences in \eqref{eq:outer_norm_as_size_and_measure_1_no_partition} follow by \eqref{eq:covering_exterior_level_1_no_partition}, the definition of $F_k$, \eqref{eq:optimal_covering_exterior_level_1_no_partition}, Fubini, and the bounds for the geometric series.
\end{proof}

Under the assumption $q \geq r$ on the exponents, we can drop the assumption of the canopy condition \ref{def:parent_collection_nesting} on the setting. Moreover, for every function $f$, the collection $\{ E_{k} \colon k \in \Z \}$ produced by the decomposition forms a partition of the support of $f$.
\begin{proposition} \label{thm:atomic_decomposition_exterior_level_1_q_geq_r}
	For every $p ,q \in (0, \infty)$, $r \in (0, q]$, there exist constants $C=C(p,q,r)$, $c=c(p,q,r)$ such that the following property holds true.
	
	Let $X$ be a finite set, $\mu,\nu$ outer measures, $\omega$ a measure. For every function $f \in L^p_\mu(\ell^q_\nu(\ell^r_\omega))$ on $X$, there exists a collection $\{ E_{k} \colon k \in \Z \}$ of pairwise disjoint subsets of $X$ forming a partition of the support of $f$ such that, if we set 
	\begin{equation*} 
	F_k = \bigcup_{l \geq k} E_{l}.
	\end{equation*}
	then we have the same properties stated in  \eqref{eq:superlevel_exterior_level_1_no_partition}--\eqref{eq:outer_norm_as_size_and_measure_1_no_partition}.
\end{proposition}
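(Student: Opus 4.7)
The plan is to follow the proof of Proposition \ref{thm:atomic_decomposition_exterior_level_1_no_partition} almost verbatim, with two structural simplifications permitted by the hypothesis $r \leq q$. First, in the absence of a $\mu$-covering function, we simply set $F_k = \bigcup_{l \geq k} E_l$. Second, in the inner forward recursion that constructs $E_k$ at a given level $k$, we no longer need the selected family to be $\nu$-Carath\'{e}odory: Lemma \ref{thm:L_q_subsuperorthogonality} already provides the $q$-suborthogonality for arbitrary disjoint collections when $q \geq r$, replacing the role of Lemma \ref{thm:L_q_orthogonality}, and Case II of Lemma \ref{thm:atomic_super_level_set_exterior_level} provides the bound $\sum_n \mu(E_{k,n}) \leq C \mu(\ell^q_\nu(\ell^r_\omega)(f) > c\, 2^k)$ for arbitrary disjoint collections with constants independent of the Carath\'{e}odory parameter $K$.

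Concretely, we perform a backward recursion in $k$ starting at a level above $\norm{f}_{L^\infty_\mu(\ell^q_\nu(\ell^r_\omega))}$, where all $E_k$ are empty. At each $k$, a forward recursion selects at step $n$ any $E_{k,n} \subseteq \mathrm{supp}(f) \setminus (F_{k+1} \cup \bigcup_{m < n} E_{k,m})$ satisfying $\ell^q_\nu(\ell^r_\omega)(f)(E_{k,n}) > 2^k$, and terminates when no such set exists; finiteness of $X$ guarantees termination at each level. Properties \eqref{eq:maximal_choice_exterior_level_1_no_partition} and \eqref{eq:covering_exterior_level_1_no_partition} are direct from the construction, after one observes that restricting candidates to $\mathrm{supp}(f)$ does not exclude any set carrying a large size, since augmenting $A$ with points outside $\mathrm{supp}(f)$ only decreases its size. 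Property \eqref{eq:superlevel_exterior_level_1_no_partition} follows by combining the selection condition at each step with the $q$-suborthogonality from Lemma \ref{thm:L_q_subsuperorthogonality} and the quasi-triangle inequality for the outer $L^q_\nu(\ell^r_\omega)$ quasi-norm of two summands; property \eqref{eq:optimal_covering_exterior_level_1_no_partition} follows by applying Case II of Lemma \ref{thm:atomic_super_level_set_exterior_level} to $f 1_{F_{k+1}^c}$ with $N=1$ and transferring the resulting super level measure bound back to $f$ via the pointwise monotonicity of the size.

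The genuinely new element, and the step I expect to require the most care, is the partition claim. By construction the $E_k$'s are pairwise disjoint subsets of $\mathrm{supp}(f)$, so it suffices to show they cover $\mathrm{supp}(f)$. Fix $x$ with $f(x) \neq 0$ and let $k_x \in \Z$ be such that $\ell^q_\nu(\ell^r_\omega)(f)(\{x\}) \in (2^{k_x}, 2^{k_x+1}]$; since this quantity is bounded above by $\norm{f}_{L^\infty_\mu(\ell^q_\nu(\ell^r_\omega))}$, the level $k_x$ lies in the selection range of the backward recursion. If $x$ did not belong to $F_{k_x}$, then at the termination of the forward recursion at level $k_x$ the singleton $\{x\}$ would still be disjoint from $F_{k_x+1}$ and from every previously chosen $E_{k_x,n}$, while carrying size strictly greater than $2^{k_x}$, contradicting the termination criterion. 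Hence $x \in F_{k_x}$, and since $x \in \mathrm{supp}(f)$ was arbitrary, the $E_k$'s partition $\mathrm{supp}(f)$. Finally, the chain of equivalences in \eqref{eq:outer_norm_as_size_and_measure_1_no_partition} is derived from \eqref{eq:covering_exterior_level_1_no_partition}, \eqref{eq:optimal_covering_exterior_level_1_no_partition}, Fubini, and the standard bounds for geometric series, exactly as in Proposition \ref{thm:atomic_decomposition_exterior_level_1_no_partition}.
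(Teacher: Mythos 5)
Your proof is correct and follows essentially the same route as the paper's: the paper likewise replaces the Carath\'{e}odory-based Lemma \ref{thm:L_q_orthogonality} by the unconditional $q$-suborthogonality \eqref{eq:L_q_suborthogonality} valid for $q \geq r$, the only cosmetic difference being that it collapses your inner forward recursion into the choice of a single maximal set $E_k$, whose existence it derives from the same suborthogonality applied to unions of admissible sets. Your explicit singleton argument for the partition claim correctly supplies a detail the paper leaves implicit.
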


\begin{proof}
	The argument is analogous to that in the previous proof. The only difference is in the definition of $E_k$, for which we do not need a second forward recursion. 
	
	In fact, we fix $k$ and assume to have selected $E_{l}$ for every $l > k$. In particular, $F_{k+1}$ is already well-defined. If there exists a subset $A$ of $X$ disjoint from $F_{k+1}$ satisfying \eqref{eq:selection_condition_exterior_level}, we set it to be $E_k$ making sure that
	\begin{equation*}
	\norm{f 1_{(A \cup F_{k+1})^c}} \leq 2^k.
	\end{equation*}
	We can fulfil this condition. In fact, if there exists a subset $B$ of $X$ disjoint from $A \cup F_{k+1}$ satisfying \eqref{eq:selection_condition_exterior_level}, then, by \eqref{eq:L_q_suborthogonality} in Lemma \ref{thm:L_q_subsuperorthogonality} and the subadditivity of $\nu$, also $A \cup B$ satisfies \eqref{eq:selection_condition_exterior_level}. 
	
	Due to the definition of $F_k$, the collection $\{ E_k \colon k \in \Z \}$ forms a partition of the support of $f$.
\end{proof}

Under the assumption of the canopy condition \ref{def:parent_collection_nesting} on the setting, we can recover a partition of the support of the function $f$ in the full range of exponents by a slightly different decomposition.
\begin{proposition} \label{thm:atomic_decomposition_exterior_level_2}
	For every $p ,q,r \in (0, \infty)$, $\Phi,K \geq 1$, there exist constants $C=C(p,q,r,\Phi,K)$, $c=c(p,q,r,\Phi,K)$, $\Psi= \Psi(\Phi,p)$ such that the following property holds true.
	
	Let $X$ be a set, $\mu,\nu$ outer measures, $\omega$ a measure, and $\mathcal{C}$ a $\mu$-covering function such that  $(X,\mu,\nu,\mathcal{C})$ satisfies	the canopy condition \ref{def:parent_collection_nesting}. For every function $f \in L^p_\mu(\ell^q_\nu(\ell^r_\omega))$ on $X$, there exists a collection $\{ E_{k} \colon k \in \Z \}$ of pairwise disjoint subsets of $X$ such that, if we set 
	\begin{equation*}
	F_k = \mathbf{B}_{\mathcal{C}} \big( \mathbf{B}_{\mathcal{C}}(F_{k+1} \cup E_{k}) \big),
	\end{equation*}
	then we have the same properties stated in  \eqref{eq:superlevel_exterior_level_1_no_partition}--\eqref{eq:optimal_covering_exterior_level_1_no_partition} with $2^k$ replaced by $\Psi^k$. 
	
	In particular, the $\nu$-Carath\'{e}odory collections $\{ \widetilde{E}^1_{k} \colon k \in \Z \}, \{ \widetilde{E}^2_{k} \colon k \in \Z \}$ defined by
	\begin{equation} \label{eq:definition_E_tilde_k}
	\widetilde{E}^1_k = \mathbf{B}_{\mathcal{C}}(F_{k+1} \cup E_{k}) \setminus F_{k+1}, \qquad \qquad \widetilde{E}^2_k = F_{k} \setminus \mathbf{B}_{\mathcal{C}}(F_{k+1} \cup E_{k}),
	\end{equation}
	form a partition of the support of $f$, and we have
	\begin{equation} \label{eq:outer_norm_as_size_and_measure_2}
	\norm{f}^p_{L^p_\mu(\ell^q_\nu(\ell^r_\omega))} \sim_{p,q,r,\Phi,K}  \sum_{k \in \Z} \Psi^{kp} \mu(E_{k}) \sim_{p,q,r,\Phi,K} \sum_{k \in \Z} \Psi^{kp} \big( \mu(\widetilde{E}^1_{k}) + \mu(\widetilde{E}^2_{k}) \big) .
	\end{equation}
\end{proposition}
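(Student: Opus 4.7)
The plan is to adapt the backward--forward recursion from the proof of Proposition \ref{thm:atomic_decomposition_exterior_level_1_no_partition}, with two modifications: replace the base $2$ by a parameter $\Psi > 1$ to be fixed at the end as a function of $\Phi$ and $p$, and insert a second application of the parent function in the definition of $F_k$. Concretely, I would run a backward recursion on $k$ in which, given $F_{k+1}$, one selects a $\nu$-Carath\'{e}odory collection $\{E_{k,n} : n \in \mathbb{N}_k\}$ disjoint from $F_{k+1}$ by forward recursion on $n$, at each step choosing $E_{k,n}$ disjoint from $F_{k+1} \cup \mathbf{B}_{\mathcal{C}}\big(\bigcup_{m < n} E_{k,m}\big)$ with $\ell^q_\nu(\ell^r_\omega)(f)(E_{k,n}) > \Psi^k$; the canopy condition preserves $\nu$-Carath\'{e}odory at every step. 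Setting $E_k = \bigcup_n E_{k,n}$ and $F_k = \mathbf{B}_{\mathcal{C}}(\mathbf{B}_{\mathcal{C}}(F_{k+1} \cup E_k))$, the analogues of \eqref{eq:superlevel_exterior_level_1_no_partition}--\eqref{eq:optimal_covering_exterior_level_1_no_partition} with $\Psi^k$ in place of $2^k$ follow exactly as in Proposition \ref{thm:atomic_decomposition_exterior_level_1_no_partition}, combining the termination of the inner recursion with Lemma \ref{thm:L_q_orthogonality} and Lemma \ref{thm:atomic_super_level_set_exterior_level}.

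For the partition claim, I would observe that the monotonicity of $\mathbf{B}_{\mathcal{C}}$ gives $F_{k+1} \subseteq \mathbf{B}_{\mathcal{C}}(F_{k+1} \cup E_k) \subseteq F_k$, so the definitions in \eqref{eq:definition_E_tilde_k} display $F_k$ as the disjoint union of $F_{k+1}$, $\widetilde{E}^1_k$, $\widetilde{E}^2_k$. Telescoping across $k$ produces $\bigcup_k F_k = \bigsqcup_k (\widetilde{E}^1_k \sqcup \widetilde{E}^2_k)$, and the support of $f$ is contained in $\bigcup_k F_k$ because the $L^\infty_\mu(\ell^q_\nu(\ell^r_\omega))$ quasi-norm of $f 1_{F_k^c}$ tends to $0$ as $k \to -\infty$ by the analogue of \eqref{eq:maximal_choice_exterior_level_1_no_partition}.

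The role of the double parent function is to force both $\{\widetilde{E}^1_k\}_k$ and $\{\widetilde{E}^2_k\}_k$ to be $\nu$-Carath\'{e}odory, which I would verify by induction in decreasing $k$ using the canopy condition. For $\{\widetilde{E}^1_k\}_k$, the key observation is $\bigcup_{l > k} \widetilde{E}^1_l \subseteq \mathbf{B}_{\mathcal{C}}(F_{k+2} \cup E_{k+1})$: it holds directly for $l = k+1$ from the definition, and for $l > k+1$ via $\widetilde{E}^1_l \subseteq F_l \subseteq F_{k+2}$. A further application of $\mathbf{B}_{\mathcal{C}}$ produces a subset of $F_{k+1}$, from which $\widetilde{E}^1_k$ is disjoint by construction, so the canopy condition appends $\widetilde{E}^1_k$. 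The analogous argument for $\{\widetilde{E}^2_k\}_k$ relies on $\bigcup_{l > k} \widetilde{E}^2_l \subseteq F_{k+1}$, hence $\mathbf{B}_{\mathcal{C}}(\bigcup_{l > k} \widetilde{E}^2_l) \subseteq \mathbf{B}_{\mathcal{C}}(F_{k+1} \cup E_k)$, from which $\widetilde{E}^2_k$ is disjoint by definition.

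For the equivalence \eqref{eq:outer_norm_as_size_and_measure_2}, since $E_k \subseteq F_k \setminus F_{k+1} = \widetilde{E}^1_k \sqcup \widetilde{E}^2_k$, subadditivity yields $\mu(E_k) \leq \mu(\widetilde{E}^1_k) + \mu(\widetilde{E}^2_k)$ immediately. For the reverse direction the $\Phi$-parent property applied twice gives $\mu(F_k) \leq \Phi^2(\mu(F_{k+1}) + \mu(E_k))$; iterating and exchanging the order of summation shows $\sum_k \Psi^{kp} \mu(F_k) \leq C(\Phi, p) \sum_k \Psi^{kp} \mu(E_k)$, provided $\Psi$ is fixed so that $\Psi^p > 2\Phi^2$ in order to absorb the resulting geometric factor. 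Since $\widetilde{E}^i_k \subseteq F_k$, this closes the equivalence between the two sums, and the equivalence with $\norm{f}^p_{L^p_\mu(\ell^q_\nu(\ell^r_\omega))}$ follows as in Proposition \ref{thm:atomic_decomposition_exterior_level_1_no_partition} with base $\Psi$ in place of $2$. I expect the main obstacle to be the simultaneous verification of the partition and $\nu$-Carath\'{e}odory properties, which is precisely what forces the double application of $\mathbf{B}_{\mathcal{C}}$ in the definition of $F_k$ together with the quantitative choice of $\Psi$.
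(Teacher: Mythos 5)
Your proposal is correct and follows essentially the same route as the paper: the same backward--forward selection at levels $\Psi^k$, the same doubled parent function in $F_k$ to make the canopy condition applicable to both $\{\widetilde{E}^1_k\}$ and $\{\widetilde{E}^2_k\}$, and the same iteration of $\mu(F_k)\leq\Phi^2(\mu(F_{k+1})+\mu(E_k))$ absorbed by choosing $\Psi^p$ large relative to $\Phi^2$ (the paper fixes $\Psi=\Phi^{3/p}$, which serves the same purpose as your condition $\Psi^p>2\Phi^2$). Your explicit inductive verification of the $\nu$-Carath\'{e}odory property of the two collections is a correct elaboration of what the paper asserts "due to their definition".
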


\begin{proof}
	The argument is analogous to that in the proof of Proposition \ref{thm:atomic_decomposition_exterior_level_1_no_partition}. The only difference is that we replace the levels $2^k$ with the levels $\Psi^k$, where 
	\begin{equation*}
	\Psi = \Phi^{\frac{3}{p}}.
	\end{equation*} 
			
	In fact, we define $E_k$ by a double recursion as before, and $\widetilde{E}^1_{k}, \widetilde{E}^2_{k}$ as in \eqref{eq:definition_E_tilde_k}. Due to their definition, the collections $\{ \widetilde{E}^1_{k} \colon k \in \Z \}, \{ \widetilde{E}^2_{k} \colon k \in \Z \}$ are $\nu$-Carath\'{e}odory and they form a partition of the support of $f$.
	
	We turn now to the proof of the desired equivalences in \eqref{eq:outer_norm_as_size_and_measure_2}. By the properties corresponding to \eqref{eq:optimal_covering_exterior_level_1_no_partition} and \eqref{eq:covering_exterior_level_1_no_partition} in this setting, and the definition of $F_k$, we have
	\begin{align*}
	\sum_{k \in \Z} \Psi^{kp}\mu(E_k) & \leq C \sum_{k \in \Z} \Psi^{kp}\mu(\ell^q_\nu(\ell^r_\omega) (f) > c \Psi^{k}) \\
	& \leq C \norm{f}^p_{L^p_\mu(\ell^q_\nu(\ell^r_\omega))} \\
	& \leq C \sum_{k \in \Z} \Psi^{kp}\mu(\ell^q_\nu(\ell^r_\omega) (f) > \Psi^{k}) \\
	& \leq C \sum_{k \in \Z} \Psi^{kp} \sum_{l \geq k} \big( \mu(\widetilde{E}^1_{l}) + \mu(\widetilde{E}^2_{l}) \big).
	\end{align*}
	Moreover, by \eqref{eq:definition_E_tilde_k}, $\mathcal{C}$ being a $\mu$-covering function, and the definition of $\Psi$, we have
	\begin{align*}
	\sum_{k \in \Z} \Psi^{kp} \sum_{l \geq k} \big( \mu(\widetilde{E}^1_{l}) + \mu(\widetilde{E}^2_{l}) \big) & \leq C \sum_{k \in \Z} \Psi^{kp} \sum_{l \geq k} \sum_{j \geq l} \Phi^{2(j-l)} \mu(E_{j}) \\
	& \leq C \sum_{k \in \Z} \Psi^{kp} \sum_{j \geq k} \Phi^{2(j-k)} \mu(E_{j}) \\
	& \leq C \sum_{k \in \Z} \sum_{j \geq k} \Phi^{k-j} \Psi^{jp} \mu(E_{j}) \\
	& \leq C \sum_{j \in \Z} \Psi^{jp} \mu(E_{j}).
	\end{align*}
\end{proof}

We are now ready to prove Theorem \ref{thm:collapsing_2_step_iteration_finite}.

\begin{proof} [Proof of Theorem \ref{thm:collapsing_2_step_iteration_finite}]
	The case $q=\infty$ follows by definition. Therefore, without loss of generality, we assume $q=1$. 
	
	{\textbf{Case I: arbitrary $r \in (0, \infty]$.}} For a function $f \in L^1_\mu(\ell^1_\nu(\ell^{r}_\omega))$, let $\{E_{k} \colon k \in \Z \}$, $\{\widetilde{E}^1_{k} \colon k \in \Z \}$, $\{\widetilde{E}^2_{k} \colon k \in \Z \}$ be the collections of pairwise disjoint subsets of $X$ as in Proposition \ref{thm:atomic_decomposition_exterior_level_2}. By \eqref{eq:outer_norm_as_size_and_measure_2}, the property corresponding to \eqref{eq:superlevel_exterior_level_1_no_partition}, and Lemma \ref{thm:L_q_orthogonality}, we have
	\begin{align*}
	\norm{f}_{L^1_\mu(\ell^1_\nu(\ell^{r}_\omega))} &\leq C \sum_{k \in \Z} \Psi^{k} \mu(E_{k}) 
	\leq C \sum_{k \in \Z}  \norm{f 1_{E_{k}}}_{L^1_\nu(\ell^r_\omega)} 
	\leq C \norm{\sum_{k \in \Z} f 1_{E_{k}}}_{L^1_\nu(\ell^r_\omega)} \\
	&\leq C \norm{f}_{L^1_\nu(\ell^r_\omega)}.
	\end{align*}
	Moreover, by the quasi-triangle inequality for the outer $L^p$ quasi-norm of two summands, Lemma \ref{thm:L_q_orthogonality}, the property corresponding to \eqref{eq:maximal_choice_exterior_level_1_no_partition}, and \eqref{eq:outer_norm_as_size_and_measure_2}, we have
	\begin{align*}
	\norm{f}_{L^1_\nu(\ell^r_\omega)} & \leq C ( \norm{ \sum_{k \in \Z} f 1_{\widetilde{E}^1_k} }_{L^1_\nu(\ell^r_\omega)} + \norm{ \sum_{k \in \Z} f 1_{\widetilde{E}^2_k} }_{L^1_\nu(\ell^r_\omega)} ) \\
	& \leq C ( \sum_{k \in \Z} \norm{ f 1_{\widetilde{E}^1_k} }_{L^1_\nu(\ell^r_\omega)} + \sum_{k \in \Z} \norm{ f 1_{\widetilde{E}^2_k} }_{L^1_\nu(\ell^r_\omega)} ) \\	
	& \leq C \sum_{k \in \Z} \Psi^{k} \big( \mu(\widetilde{E}^1_k) + \mu(\widetilde{E}^2_k) \big) \\
	& \leq C \norm{f}_{L^1_\mu(\ell^1_\nu(\ell^r_\omega))}.
	\end{align*}
	
	{\textbf{Case II: $q \geq r$.}} For a function $f \in L^1_\mu(\ell^1_\nu(\ell^{r}_\omega))$, let $\{E_{k} \colon k \in \Z \}$ be the collection of pairwise disjoint subsets of $X$ as in Proposition \ref{thm:atomic_decomposition_exterior_level_1_q_geq_r}. By the properties corresponding to \eqref{eq:outer_norm_as_size_and_measure_1_no_partition} and \eqref{eq:superlevel_exterior_level_1_no_partition}, and \eqref{eq:L_q_suborthogonality} in Lemma \ref{thm:L_q_subsuperorthogonality}, we have
	\begin{align*}
	\norm{f}_{L^1_\mu(\ell^1_\nu(\ell^{r}_\omega))} &\leq C \sum_{k \in \Z} 2^{k} \mu(E_{k}) 
	\leq C \sum_{k \in \Z}  \norm{f 1_{E_{k}}}_{L^1_\nu(\ell^r_\omega)} 
	\leq C \norm{\sum_{k \in \Z} f 1_{E_{k}}}_{L^1_\nu(\ell^r_\omega)} \\
	&\leq C \norm{f}_{L^1_\nu(\ell^r_\omega)}.
	\end{align*}

	{\textbf{Case III: $q \leq r$.}} For a function $f \in L^1_\mu(\ell^1_\nu(\ell^{r}_\omega))$, let $\{A_{k} \colon k \in \Z \}$ be the collection of optimal sets associated with the super level measures $\mu(\ell^1_\nu(\ell^r_\omega)(f) > 2^{k})$, namely
	\begin{gather} \label{eq:condition_super_level_exterior_level_special_size}
	\norm{ f 1_{A_{k}^c}}_{L^\infty_\mu(\ell^1_\nu(\ell^r_\omega))} \leq 2^{k}, \\ \label{eq:condition_super_level_exterior_level_special_measure}
	\mu(\ell^1_\nu(\ell^r_\omega)(f) > 2^{k}) = \mu(A_k).
	\end{gather}  
	By \eqref{eq:L_q_superorthogonality} in Lemma \ref{thm:L_q_subsuperorthogonality}, \eqref{eq:condition_super_level_exterior_level_special_size}, the monotonicity of $\mu$, and \eqref{eq:condition_super_level_exterior_level_special_measure}, we have
	\begin{align*}
	\norm{f}_{L^1_\nu(\ell^r_\omega)} & \leq C \sum_{k \in \Z} \norm{f 1_{A_k \setminus A_{k+1}}}_{L^1_\nu(\ell^r_\omega)}		
	\leq C \sum_{k \in \Z} 2^{k+1} \mu( A_k \setminus A_{k+1})		
	\leq C \sum_{k \in \Z} 2^k \mu( A_k ) \\
	& \leq C \norm{f}_{L^1_\mu(\ell^1_\nu(\ell^r_\omega))}.
	\end{align*}
\end{proof}

\subsection{Dualizing function candidate}

We start recalling the setting. Let $p, q,r \in (1,\infty)$, $\Phi, K \geq 1$. Let $X$ be a finite set, $\mu,\nu$ outer measures, $\omega$ a measure, and $\mathcal{C}$ a $\mu$-covering function. For $q < r$, we assume $(X,\mu,\nu,\mathcal{C})$ to satisfy the canopy condition \ref{def:parent_collection_nesting}. For $q > r$, we assume $(X,\mu,\nu,\mathcal{C})$ to satisfy the crop condition \ref{def:anti_stacking_compatibility}. 

When $q=r$, the double iterated outer $L^p$ quasi-norm is isomorphic to a single iterated one, and the results stated in Theorem \ref{thm:Holder_triangular_2_step_iteration_finite} correspond to properties $(ii)$, $(iii)$ of Theorem 1.1 in \cite{2020arXiv200105903F}. 

When $q \neq r$, for a function $f \in L^p_\mu(\ell^q_\nu(\ell^r_\omega))$ on $X$, we provide the candidate dualizing function $g$ on $X$. We distinguish two cases.

\textbf{Case 1: $q>r$.} Let $\{ E_{k} \colon k \in \Z\}$ be the collection of pairwise disjoint subsets of $X$ associated with the function $f$ and the size $\ell^q_\nu(\ell^r_\omega)$ as in Proposition \ref{thm:atomic_decomposition_exterior_level_1_q_geq_r}.

\textbf{Case 2: $q<r$.} Let $\{ E_{k} \colon k \in \Z\}$ be the collection of pairwise disjoint subsets of $X$ associated with the function $f$ and the size $\ell^q_\nu(\ell^r_\omega)$ as in Proposition \ref{thm:atomic_decomposition_exterior_level_1_no_partition}.

In both cases, let $\{ U^{k}_{j} \colon j \in \Z \}$ be the collection of pairwise disjoint subsets of $E_k$ associated with the function $f 1_{E_{k}}$ and the size $\ell^r_\omega$ as in Proposition \ref{thm:atomic_decomposition_interior_level}. We define
\begin{align*} 
f_{k,j}(x) =& f(x) 1_{U^{k}_{j}}(x), \\
f_{k}(x) =& \sum_{j \in \Z} f_{k,j} (x) = f(x) \sum_{j \in \Z} 1_{U^{k}_{j}}(x).
\end{align*}

When $q > r$, let
\begin{equation*}
M = 2 + \Big\lfloor \frac{\log_2 K}{r} \Big\rfloor,
\end{equation*}
where $\lfloor x \rfloor$ is the largest integer smaller or equal than $x$. For 
\begin{equation*}
\mathcal{F}^{k}_j = \{ F \in \mathcal{E} \colon \ell^r_\omega(f_{k,j})(F) \leq 2^{j-M} \},
\end{equation*}
let $\mathcal{G}^{k}_{j}$ be its $\nu$-Carath\'{e}odory subcollection as in the crop condition \ref{def:anti_stacking_compatibility}, and set
\begin{equation*}
\widetilde{U}^k_j = U^{k}_{j} \setminus \bigcup_{G \in \mathcal{G}^{k}_{j}} G.
\end{equation*}

We set
\begin{equation*}
W^k_j = 
\begin{dcases}
\widetilde{U}^k_j, \qquad  & \textrm{for $q > r$,} \\
U^k_j, \qquad  & \textrm{for $q < r$.}
\end{dcases}
\end{equation*}
and we define
\begin{equation} \label{eq:dual_function}
\begin{split}
g_{k,j}(x) =& f(x)^{r-1} 1_{W^{k}_{j}}(x), \\
g_{k}(x) =& \sum_{j \in \Z} 2^{j(q-r)} g_{k,j} (x) = f(x)^{r-1} \sum_{j \in \Z} 2^{j(q-r)} 1_{W^{k}_{j}}(x), \\
g(x) = & \sum_{k \in \Z} 2^{k(p-q)} g_{k}(x) = f(x)^{r-1} \sum_{k \in \Z} 2^{k(p-q)} \sum_{j \in \Z} 2^{j(q-r)} 1_{W^{k}_{j}}(x).
\end{split}
\end{equation}

\begin{lemma} \label{thm:lower_bound_size_outside_exceptional_set}
	Let $p,q,r \in (1,\infty)$, $q \neq r$, $\Phi,K \geq 1$. There exists a constant $c=c(r,K)$ such that, for every function $f \in L^p_\mu(\ell^q_\nu(\ell^r_\omega))$ on $X$, we have
	\begin{equation} \label{eq:lower_bound_size_outside_exceptional_set}
	\norm{f_{k,j}^r 1_{W^k_j}}_{L^1(X,\omega)} \geq c 2^{jr} \nu(U^{k}_{j}).	
	\end{equation}
\end{lemma}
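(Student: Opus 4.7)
The argument splits into the two cases $q < r$ and $q > r$, since the definition of $W^k_j$ differs. In both cases, the starting observation is that $f_{k,j}^r 1_{W^k_j} = f^r 1_{W^k_j}$, which follows from $f_{k,j} = f 1_{U^k_j}$ and $W^k_j \subseteq U^k_j$.

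When $q < r$, one has $W^k_j = U^k_j$ and hence the quantity to bound equals $\norm{f_{k,j}}_{L^r(X,\omega)}^r$. The collection $\{U^k_j : j \in \Z\}$ is produced by Proposition \ref{thm:atomic_decomposition_interior_level} applied to the function $f 1_{E_k}$ with size $\ell^r_\omega$. From $U^k_j \cap V^k_{j+1} = \varnothing$ one deduces $(f 1_{E_k}) 1_{V^k_{j+1}^c} 1_{U^k_j} = f_{k,j}$, and the superlevel property \eqref{eq:superlevel_interior_level} then reads $\norm{f_{k,j}}_{L^r}^r > 2^{jr} \nu(U^k_j)$, giving the claim with $c = 1$.

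When $q > r$, one has $W^k_j = U^k_j \setminus \bigcup_{G \in \mathcal{G}^k_j} G$, so
\[
\int_{W^k_j} f^r \, d\omega = \int_{U^k_j} f^r \, d\omega - \int_{U^k_j \cap \bigcup_{\mathcal{G}^k_j} G} f^r \, d\omega.
\]
The first term exceeds $2^{jr} \nu(U^k_j)$ by the argument of the previous case. I aim to bound the second term by $\tfrac{1}{2} 2^{jr} \nu(U^k_j)$, yielding the claim with $c = 1/2$. Using the pairwise disjointness of $\mathcal{G}^k_j$ intrinsic to the $\nu$-Carath\'{e}odory property, together with the fact that $f_{k,j}$ vanishes off $U^k_j$, the second integral equals $\sum_{G \in \mathcal{G}^k_j} \norm{f_{k,j} 1_G}_{L^r}^r$, and each summand is bounded by $2^{(j-M)r} \nu(G)$ since $G \in \mathcal{F}^k_j$.

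The main obstacle is then to reduce $\sum_G \nu(G)$ to an appropriate multiple of $\nu(U^k_j)$. The naive $\nu$-Carath\'{e}odory bound $\sum_G \nu(G) \leq K \nu(\bigcup_{\mathcal{G}^k_j} G)$ is insufficient by itself, since $\nu(\bigcup_G G)$ need not be comparable to $\nu(U^k_j)$. I expect the step to be carried out by combining the $\nu$-Carath\'{e}odory inequality applied with $U = U^k_j$, which gives $\sum_G \nu(G \cap U^k_j) \leq K \nu(U^k_j)$, with the maximal-choice bound $\norm{f_{k,j}}_{L^\infty_\nu(\ell^r_\omega)} \leq 2^{j+1}$ inherited from \eqref{eq:maximal_choice_interior_level} at level $j + 1$ (because $U^k_j \subseteq V^k_{j+1}^c$), and by balancing these two inputs against the choice $M = 2 + \lfloor \log_2 K / r \rfloor$, which is tailored so that $K \cdot 2^{-Mr} \leq 2^{-r}$ and precisely absorbs the $\nu$-Carath\'{e}odory loss to produce the required factor $1/2$.
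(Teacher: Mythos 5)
Your Case $q<r$ is correct and coincides with the paper's: since $U^k_j\subseteq (V^k_{j+1})^c$, the superlevel property \eqref{eq:superlevel_interior_level} for the decomposition of $f1_{E_k}$ gives $\norm{f_{k,j}}_{L^r(X,\omega)}^r>2^{jr}\nu(U^k_j)$ whenever $U^k_j\neq\varnothing$, and the empty case is trivial. The gap is in Case $q>r$, and although you have correctly located the difficult step, the repair you sketch does not work. After reducing to bounding $\sum_{G\in\mathcal{G}^k_j}\norm{f_{k,j}1_G}_{L^r(X,\omega)}^r$, you have two estimates available: (a) $\norm{f_{k,j}1_G}_{L^r(X,\omega)}^r\leq 2^{(j-M)r}\nu(G)$ from $G\in\mathcal{F}^k_j$, whose sum over $G$ cannot be related to $\nu(U^k_j)$, as you observe; and (b) the maximal-choice bound $\norm{f_{k,j}}_{L^\infty_\nu(\ell^r_\omega)}\leq 2^{j+1}$, which gives $\norm{f_{k,j}1_G}_{L^r(X,\omega)}^r\leq 2^{(j+1)r}\nu(U^k_j\cap G)$ and hence, after \eqref{eq:K_Carath\'{e}odory} with $U=U^k_j$, the total bound $K2^{(j+1)r}\nu(U^k_j)$. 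The latter exceeds the main term $2^{jr}\nu(U^k_j)$ and contains no trace of $M$, so no choice of $M$ can ``absorb the Carath\'eodory loss'' along this route; nor does any interpolation between (a) and (b) eliminate the uncontrolled quantity $\sum_G\nu(G)$. The ``balancing'' you describe in your last paragraph therefore cannot produce the factor $1/2$.

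The paper's proof uses instead the single estimate $\norm{f_{k,j}1_G}_{L^r(X,\omega)}^r=\norm{f_{k,j}1_{U^k_j\cap G}}_{L^r(X,\omega)}^r\leq 2^{(j-M)r}\nu(U^k_j\cap G)$, i.e.\ the gain $2^{-Mr}$ coming from $G\in\mathcal{F}^k_j$ \emph{and} the intersection measure $\nu(U^k_j\cap G)$ in the same inequality (this is the compressed step the paper attributes to ``the control on the size defining $\mathcal{F}^k_j$'', and it is exactly what must be supplied, since the literal size bound $\ell^r_\omega(f_{k,j})(G)\leq 2^{j-M}$ only yields $\nu(G)$ on the right). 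Once that inequality is in hand, \eqref{eq:K_Carath\'{e}odory} with $U=U^k_j$ gives $\sum_G\nu(U^k_j\cap G)\leq K\nu(U^k_j)$, and the choice $M=2+\lfloor \log_2 K/r\rfloor$ forces $K2^{-Mr}\leq 2^{-r}\leq 1/2$, so the subtracted term is at most $\tfrac12 2^{jr}\nu(U^k_j)$. Your write-up never produces this estimate: you obtain either the small factor $2^{(j-M)r}$ paired with $\nu(G)$, or the large factor $2^{(j+1)r}$ paired with $\nu(U^k_j\cap G)$, whereas the argument needs the small factor and the intersection measure simultaneously.
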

\begin{proof}
	{\textbf{Case I: $q > r$.}} We have
	\begin{align*}
	\norm{f_{k,j}^r 1_{W^k_j}}_{L^1(X,\omega)} 
	& \geq \norm{f_{k,j}^r }_{L^1(X,\omega)} - \sum_{G \in \mathcal{G}^{k}_{j}} \norm{f_{k,j}^r 1_G}_{L^1(X,\omega)} \\
	& \geq  2^{jr} \nu(U^{k}_{j}) - \sum_{G \in \mathcal{G}^{k}_{j}} 2^{(j-M)r} \nu(U^{k}_{j} \cap G) \\
	& \geq 2^{jr} \nu(U^{k}_{j}) - K 2^{(j-M)r} \nu(U^{k}_{j}) \\
	& \geq c 2^{jr} \nu(U^{k}_{j}),
	\end{align*}
	where we used \eqref{eq:superlevel_interior_level} and the control on the size $\ell^r_\omega$ defining the elements of $\mathcal{F}^{k}_{j}$ in the second inequality, the $\nu$-Carath\'{e}odory condition \eqref{eq:K_Carath\'{e}odory} for the collection $\mathcal{G}^{k}_{j}$ in the third, and the definition of $M$ in the fourth. 
		
	{\textbf{Case II: $q < r$.}} The desired inequality follows by \eqref{eq:superlevel_interior_level}. 
\end{proof}

The definition of $g$ guarantees the following good lower bound on the classical $L^1$ norm of $fg$, and good upper bound on the outer $L^{p'}_\mu(\ell^{q'}_\nu(\ell^{r'}_\omega))$ quasi-norm of $g$.
\begin{lemma} \label{thm:lower_bound}
	Let $p,q,r \in (1,\infty)$, $q \neq r$, $\Phi,K \geq 1$. There exists a constant $c=c(p,q,r,\Phi,K)$ such that, for every function $f \in L^p_\mu(\ell^q_\nu(\ell^r_\omega))$ on $X$, for $g$ defined by \eqref{eq:dual_function}, then
	\begin{equation*}
	\norm{fg}_{L^1(X,\omega)} \geq c \norm{f}^p_{L^p_\mu(\ell^q_\nu(\ell^r_\omega))}. 
	\end{equation*}
\end{lemma}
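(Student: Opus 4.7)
The plan is to unfold $\norm{fg}_{L^1(X,\omega)}$ as a double sum over $k,j$ indexed by the decompositions used to define $g$, and then reassemble the resulting expression into $\sum_k 2^{kp} \mu(E_k)$, which is equivalent to $\norm{f}^p_{L^p_\mu(\ell^q_\nu(\ell^r_\omega))}$ by the exterior decomposition. The crucial inputs are Lemma \ref{thm:lower_bound_size_outside_exceptional_set} (to control the pointwise pairing on $W^k_j$ from below) and the properties \eqref{eq:superlevel_interior_level}, \eqref{eq:superlevel_exterior_level_1_no_partition}, and \eqref{eq:outer_norm_as_size_and_measure_1_no_partition} of the interior and exterior decompositions.

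First I would observe that the sets $W^k_j \subseteq U^k_j$ are pairwise disjoint (as $k,j$ vary), that $f$ agrees with $f_{k,j}$ on $W^k_j$, and that $f \geq 0$ may be assumed (since only $\abs{f}$ appears in all the norms). Using the definition of $g$ in \eqref{eq:dual_function}, one then has
\begin{equation*}
\norm{fg}_{L^1(X,\omega)} = \sum_{k \in \Z} 2^{k(p-q)} \sum_{j \in \Z} 2^{j(q-r)} \norm{f_{k,j}^r 1_{W^k_j}}_{L^1(X,\omega)}.
\end{equation*}
Applying Lemma \ref{thm:lower_bound_size_outside_exceptional_set} to each summand, this is bounded below by
\begin{equation*}
c \sum_{k \in \Z} 2^{k(p-q)} \sum_{j \in \Z} 2^{j(q-r)} 2^{jr} \nu(U^k_j) = c \sum_{k \in \Z} 2^{k(p-q)} \sum_{j \in \Z} 2^{jq} \nu(U^k_j).
\end{equation*}

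Next I would apply the equivalence \eqref{eq:outer_norm_as_size_and_measure_interior_level} from Proposition \ref{thm:atomic_decomposition_interior_level}, applied to the function $f 1_{E_k}$ and its interior decomposition $\{ U^k_j \colon j \in \Z \}$, to obtain
\begin{equation*}
\sum_{j \in \Z} 2^{jq} \nu(U^k_j) \sim_{q,r} \norm{f 1_{E_k}}^q_{L^q_\nu(\ell^r_\omega)}.
\end{equation*}
Since $E_k$ is disjoint from $F_{k+1}$ by construction, the property \eqref{eq:superlevel_exterior_level_1_no_partition} (whether from Proposition \ref{thm:atomic_decomposition_exterior_level_1_no_partition} in the case $q<r$ or Proposition \ref{thm:atomic_decomposition_exterior_level_1_q_geq_r} in the case $q>r$) reads
\begin{equation*}
c 2^k < \ell^q_\nu(\ell^r_\omega)(f 1_{F_{k+1}^c})(E_k) = \mu(E_k)^{-1/q} \norm{f 1_{E_k}}_{L^q_\nu(\ell^r_\omega)},
\end{equation*}
so that $\norm{f 1_{E_k}}^q_{L^q_\nu(\ell^r_\omega)} \geq c^q 2^{kq} \mu(E_k)$ whenever $E_k$ is nonempty. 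Inserting this lower bound into the previous display yields
\begin{equation*}
\norm{fg}_{L^1(X,\omega)} \geq c \sum_{k \in \Z} 2^{k(p-q)} \cdot 2^{kq} \mu(E_k) = c \sum_{k \in \Z} 2^{kp} \mu(E_k).
\end{equation*}

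Finally, the equivalence \eqref{eq:outer_norm_as_size_and_measure_1_no_partition} gives $\sum_k 2^{kp} \mu(E_k) \sim \norm{f}^p_{L^p_\mu(\ell^q_\nu(\ell^r_\omega))}$, which completes the proof. There is no real obstacle here beyond bookkeeping: the whole point of the delicate construction of $g$ in \eqref{eq:dual_function}, and in particular the factors $2^{k(p-q)}$ and $2^{j(q-r)}$, is precisely to make the algebraic telescoping in the exponents work out so that the combined contribution of $f_{k,j}$ and the weights reproduces $2^{kp} \mu(E_k)$. The only nontrivial analytic input is Lemma \ref{thm:lower_bound_size_outside_exceptional_set}, whose proof (via the crop condition when $q>r$) has already absorbed the subtlety of passing from $U^k_j$ to $W^k_j$.
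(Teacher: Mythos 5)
Your proposal is correct and follows essentially the same route as the paper: unfold the pairing over the double decomposition, apply Lemma \ref{thm:lower_bound_size_outside_exceptional_set} and \eqref{eq:outer_norm_as_size_and_measure_interior_level} to recover $\norm{f 1_{E_k}}^q_{L^q_\nu(\ell^r_\omega)}$, then use \eqref{eq:superlevel_exterior_level_1_no_partition} and \eqref{eq:outer_norm_as_size_and_measure_1_no_partition} to reassemble $\norm{f}^p_{L^p_\mu(\ell^q_\nu(\ell^r_\omega))}$. The only cosmetic difference is that you make explicit the identity $\norm{f 1_{E_k}} = \norm{f_k}$ and the disjointness of $E_k$ from $F_{k+1}$, which the paper leaves implicit.
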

\begin{proof}	
	By \eqref{eq:lower_bound_size_outside_exceptional_set} and \eqref{eq:outer_norm_as_size_and_measure_interior_level}, we have
	\begin{align*}
	\norm{fg}_{L^1(X,\omega)} & = \sum_{k \in \Z} 2^{k(p-q)} \sum_{j \in \Z}  2^{j(q-r)} \norm{f_{k,j}^r 1_{W^k_j}}_{L^1(X,\omega)} \geq  c \sum_{k \in \Z} 2^{k(p-q)} \sum_{j \in \Z}   2^{jq} \nu(U^{k}_{j}) \\
	& \geq  c \sum_{k \in \Z} 2^{k(p-q)} \norm{f_{k}}^q_{L^q_\nu(\ell^r_\omega)}.
	\end{align*}
	For $q<r$, by \eqref{eq:superlevel_exterior_level_1_no_partition} and \eqref{eq:outer_norm_as_size_and_measure_1_no_partition}, we have
	\begin{equation*}
	\sum_{k \in \Z} 2^{k(p-q)} \norm{f_{k}}^q_{L^q_\nu(\ell^r_\omega)} \geq  c \sum_{k \in \Z} 2^{kp} \mu(E_{k}) \geq  c \norm{f}^p_{L^p_\mu(\ell^q_\nu(\ell^r_\omega))}. 
	\end{equation*}
	For $q > r$, the properties in Proposition \ref{thm:atomic_decomposition_exterior_level_1_q_geq_r} corresponding to \eqref{eq:superlevel_exterior_level_1_no_partition} and \eqref{eq:outer_norm_as_size_and_measure_1_no_partition} yield the analogous chain of inequalities.
\end{proof}

\begin{lemma} \label{thm:upper_bound}
	Let $p,q,r \in (1,\infty)$, $q \neq r$, $\Phi,K \geq 1$. There exists a constant $C=C(p,q,r,\Phi,K)$ such that, for every function $f \in L^p_\mu(\ell^q_\nu(\ell^r_\omega))$ on $X$, for $g$ defined by \eqref{eq:dual_function}, then
	\begin{equation} \label{eq:upper_bound}
	\norm{g}^{p'}_{L^{p'}_\mu(\ell^{q'}_\nu(\ell^{r'}_\omega))} \leq C \norm{f}^p_{L^p_\mu(\ell^q_\nu(\ell^r_\omega))}. 
	\end{equation}
\end{lemma}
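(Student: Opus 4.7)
The plan is to estimate $\norm{g}^{p'}_{L^{p'}_\mu(\ell^{q'}_\nu(\ell^{r'}_\omega))}$ via a layered super level analysis, identifying super level sets of the intermediate size $\ell^{q'}_\nu(\ell^{r'}_\omega)(g)$ with the sets $F_k$ from the decomposition of $f$. Once this identification is achieved, the Fubini-type summation $\sum_k 2^{kp} \mu(F_k) \leq \Phi \sum_l 2^{lp} \mu(E_l) \leq C \norm{f}^p_{L^p_\mu(\ell^q_\nu(\ell^r_\omega))}$ will close the estimate, using the conjugate identities $(r-1)r' = r$, $(q-1)q' = q$, $(p-1)p' = p$ at each level.

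First I compute the innermost size. On $W^k_j$, the definition \eqref{eq:dual_function} gives $g = 2^{k(p-q)} 2^{j(q-r)} f^{r-1}$, and since $W^k_j \subseteq U^k_j \subseteq V^k_{j+1}{}^c$ within the interior decomposition of $f_k$, \eqref{eq:maximal_choice_interior_level} yields $\int f^r 1_{W^k_j \cap A} \diff \omega \leq C 2^{jr} \nu(W^k_j \cap A)$ for every $A$. Using $(r-1)r' = r$, this gives $\int g^{r'} 1_{W^k_j \cap A} \diff \omega \leq C 2^{k(p-q)r'} 2^{j(q-1)r'} \nu(W^k_j \cap A)$. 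Summing over $j \leq J$ via the pairwise disjointness of $\{W^k_j\}_j$ and the geometric series with ratio $2^{(q-1)r'} > 1$, I obtain $\norm{g_k 1_{V^k_{J+1}{}^c \cap A}}_{L^\infty_\nu(\ell^{r'}_\omega)} \leq C 2^{k(p-q)} 2^{J(q-1)}$. The layer cake with $V^k_{J+1} \cap A$ as candidate super level set, together with \eqref{eq:outer_norm_as_size_and_measure_interior_level}, then produces the key intermediate bound
\begin{equation*}
\norm{g_k 1_A}^{q'}_{L^{q'}_\nu(\ell^{r'}_\omega)} \leq C 2^{k(p-q)q'} \sum_j 2^{jq} \nu(U^k_j \cap A) \leq C 2^{k(p-q)q'} \norm{f_k 1_A}^q_{L^q_\nu(\ell^r_\omega)}.
\end{equation*}

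The main step, and the expected principal obstacle, is to establish the outer estimate $\norm{g 1_{F_k^c}}_{L^\infty_\mu(\ell^{q'}_\nu(\ell^{r'}_\omega))} \leq C 2^{k(p-1)}$, equivalently $\norm{g 1_{F_k^c} 1_B}^{q'}_{L^{q'}_\nu(\ell^{r'}_\omega)} \leq C 2^{kp} \mu(B)$ for every $B$. On $F_k^c$, $g = \sum_{l < k} 2^{l(p-q)} g_l$ with pairwise disjoint supports $E_l$. The cases $q > r$ and $q < r$ require different orthogonality tools: for $q > r$ (crop condition \ref{def:anti_stacking_compatibility}, so $q' < r'$), I would apply the $L^{q'}_\nu(\ell^{r'}_\omega)$-superorthogonality \eqref{eq:L_q_superorthogonality} of Lemma \ref{thm:L_q_subsuperorthogonality} directly; for $q < r$ (canopy condition \ref{def:parent_collection_nesting}, so $q' > r'$), superorthogonality fails and I would instead exploit that the sub-collection $\{E_{l,n} : l < k, n \in \N_l\}$ generated in the proof of Proposition \ref{thm:atomic_decomposition_exterior_level_1_no_partition} is $\nu$-Carath\'eodory (by iterating the canopy condition across levels), and apply Lemma \ref{thm:L_q_orthogonality}. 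In either case, combining with the intermediate bound from the previous step, the $L^\infty_\mu$ estimate $\norm{f 1_{F_k^c \cap A}}^q_{L^q_\nu(\ell^r_\omega)} \leq C 2^{kq} \mu(A)$ from the property corresponding to \eqref{eq:maximal_choice_exterior_level_1_no_partition}, and a careful balancing of the pre-factors $2^{l(p-q)q'}$ against both available bounds on $\norm{f_l 1_{F_k^c \cap B}}^q$ (namely $C 2^{lq+q} \mu(E_l \cap B)$ from $E_l \subseteq F_{l+1}^c$ at level $l$, and the collective $L^\infty_\mu$ bound at level $k$), the required inequality follows after a geometric summation in $l$.

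With the outer level estimate at hand, the discretized form \eqref{eq:discretized_norm} provides $\mu(\ell^{q'}_\nu(\ell^{r'}_\omega)(g) > C 2^{k(p-1)}) \leq \mu(F_k)$ for every $k$. Applying the $\mu$-covering bound $\mu(F_k) \leq \Phi \sum_{l \geq k} \mu(E_l)$, Fubini in the pair $(k,l)$, and $(p-1)p' = p$, I conclude
\begin{equation*}
\norm{g}^{p'}_{L^{p'}_\mu(\ell^{q'}_\nu(\ell^{r'}_\omega))} \leq C \sum_l 2^{lp} \mu(E_l) \leq C \norm{f}^p_{L^p_\mu(\ell^q_\nu(\ell^r_\omega))},
\end{equation*}
by \eqref{eq:outer_norm_as_size_and_measure_1_no_partition}, or by the corresponding partition-based equivalence from Proposition \ref{thm:atomic_decomposition_exterior_level_1_q_geq_r} in the crop case.
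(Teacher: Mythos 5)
Your overall architecture (inner bound comparing $\norm{g_k 1_A}_{L^{q'}_\nu(\ell^{r'}_\omega)}$ with $\norm{f_k 1_A}_{L^q_\nu(\ell^r_\omega)}$, then outer assembly over $l<k$ using the $L^\infty_\mu$ control from \eqref{eq:maximal_choice_exterior_level_1_no_partition}, then $\mu(F_k)\leq \Phi\sum_{l\geq k}\mu(E_l)$ and Fubini) matches the paper's. But the central step is not justified. Your ``key intermediate bound'' rests on the inequality $\sum_j 2^{jq}\nu(U^k_j\cap A)\leq C\norm{f_k 1_A}^q_{L^q_\nu(\ell^r_\omega)}$ for an \emph{arbitrary} subset $A$, which you attribute to \eqref{eq:outer_norm_as_size_and_measure_interior_level}. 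That equivalence is global ($A=X$): it uses that $U^k_j$ was \emph{selected} so that $\ell^r_\omega(f1_{V^k_{j+1}{}^c})(U^k_j)>2^j$. After intersecting with $A$, nothing forces $f$ to retain $L^r$ mass of order $2^{jr}\nu(U^k_j\cap A)$ on $U^k_j\cap A$; the quantity $\sum_j 2^{jq}\nu(U^k_j\cap A)$ can vastly overshoot $\norm{f_k1_A}^q$ (take $A\subseteq U^k_j$ with $\nu(A)$ large but $f$ nearly zero on $A$). This localization problem is exactly what the crop condition is for, and your proposal never actually uses the distinction $W^k_j$ versus $U^k_j$ at the point where it matters.

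The paper's proof repairs this in two different ways according to the sign of $q-r$. For $q>r$, the cropped sets $\widetilde U^k_j=U^k_j\setminus\bigcup_{G\in\mathcal{G}^k_j}G$ guarantee that whenever $W^k_j\cap F\neq\varnothing$, the parent set $E'=\mathbf{B}_{\mathcal{C}}(W^k_j\cap F)$ is covered by strips \emph{not} in $\mathcal{F}^k_j$, so $\ell^r_\omega(f_{k,j}1_E)(U^k_j\cap E')\gtrsim 2^j$ and Lemma \ref{thm:atomic_super_level_set_interior_level} converts $\nu(W^k_j\cap F)$ into $\nu(\ell^r_\omega(f_k1_E)>\widetilde c2^j)$ — note the right-hand side carries the enlarged set $E=\mathbf{B}_{\mathcal{C}}(F)$, not $F$, which then has to be absorbed at the outer level via \eqref{eq:parent_optimality}. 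For $q<r$, the paper abandons the $U^k_j$ in the upper bound entirely: it tests against an optimal super level set $V(2^j)$ of $f_k1_F$ and uses $q<r$ to control the terms $\widetilde j\geq j$ (which your truncation to $V^k_{J+1}{}^c$ does not address, since $V^k_{J+1}$ need not be an admissible super level set for $f_k1_A$). A secondary point: you invoke the crop/canopy conditions only for orthogonality over the $E_l$ at the outer level, whereas the paper uses the quasi-triangle inequality for $L^{q'}_\nu(\ell^{r'}_\omega)$ there; the place where the crop condition is genuinely indispensable is the inner bound you have left unproved.
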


\begin{proof}
	{\textbf{Case I: q > r.}} Let $\widetilde{k},j$ be fixed. For every subset $F$ of $X$, for every subset $U$ of $F$, we have
	\begin{equation*} 
	\begin{split}
	\ell^{r'}_\omega (g_{\widetilde{k}} 1_F 1_{(V^{\widetilde{k}}_{j})^c}) (U) &\leq \sum_{\widetilde{j} < j}  2^{\widetilde{j} (q-r)} ( \nu(U)^{-1} \norm{g_{\widetilde{k},\widetilde{j}} 1_{U \setminus V^{\widetilde{k}}_{\widetilde{j}+1} } }^{r'}_{L^{r'}_\omega})^{\frac{1}{r'}} \\
	& \leq \sum_{\widetilde{j} < j} 2^{\widetilde{j} (q-r)}  ( \nu(U)^{-1} \norm{f_{\widetilde{k},\widetilde{j}} 1_{U \setminus V^{\widetilde{k}}_{\widetilde{j}+1} } }^{r}_{L^{r}_\omega})^{\frac{1}{r'}} \\
	&\leq c 2^{j (q-1)},
	\end{split}
	\end{equation*}
	where we used the triangle inequality for the classical $L^{r'}$ norm in the first inequality, and \eqref{eq:maximal_choice_interior_level} in the third. The previous chain of inequalities yields
	\begin{equation} \label{eq:level_set_norm_realizing_function_interior_level}
	\nu(\ell^{r'}_\omega (g_{\widetilde{k}} 1_F) > c 2^{j(q-1)}) \leq \sum_{ \widetilde{j} \geq j} \nu( W^{\widetilde{k}}_{\widetilde{j}} \cap F).
	\end{equation}
	Moreover, for every fixed $\widetilde{j} \in \Z$, for $E = \mathbf{B}_{\mathcal{C}}(F)$, we have
	\begin{equation} \label{eq:optimal_theta_picking_interior_level}
	\nu( W^{\widetilde{k}}_{\widetilde{j}}  \cap F) \leq C \nu(\ell^{r}_\omega (f_{\widetilde{k}} 1_{E} ) > \widetilde{c} 2^{\widetilde{j}}).
	\end{equation}
	In fact, we have two cases.
	\begin{enumerate} [(i)]
		\item If $W^{\widetilde{k}}_{\widetilde{j}} \cap F = \varnothing$, the left hand side in \eqref{eq:optimal_theta_picking_interior_level} is $0$, and the inequality holds true.
		\item If $W^{\widetilde{k}}_{\widetilde{j}} \cap F \neq \varnothing$, by the crop condition \ref{def:anti_stacking_compatibility}, we have that $E' = \mathbf{B}_{\mathcal{C}}(W^{\widetilde{k}}_{\widetilde{j}} \cap F) \subseteq E$ is covered by a collection of disjoint subsets that are not in $\mathcal{F}^{\widetilde{k}}_{\widetilde{j}}$, so that
		\begin{equation*}
		\ell^r_\omega(f_{\widetilde{k},\widetilde{j}} 1_{E})(U^{\widetilde{k}}_{\widetilde{j}} \cap E') \geq \widetilde{c} 2^{\widetilde{j}},
		\end{equation*}
		hence, by Lemma \ref{thm:atomic_super_level_set_interior_level}, we obtain \eqref{eq:optimal_theta_picking_interior_level}.
	\end{enumerate}
	Therefore, by \eqref{eq:level_set_norm_realizing_function_interior_level} and \eqref{eq:optimal_theta_picking_interior_level}, we have
	\begin{equation} \label{eq:upper_bound_norm_inner}
	\begin{split}
	\norm{g_{\widetilde{k}} 1_F}_{L^{q'}_\nu(\ell^{r'}_\omega)}^{q'} &\leq C \sum_{j \in \Z} 2^{jq} \nu(\ell^{r'}_\omega (g_{\widetilde{k}} 1_F) > c 2^{j(q-1)}) \\
	&\leq C \sum_{j \in \Z} 2^{jq} \sum_{\widetilde{j} \geq j} \nu(\ell^{r}_\omega (f_{\widetilde{k}} 1_{E}) > \widetilde{c} 2^{\widetilde{j}}) \\
	&\leq C \norm{f_{\widetilde{k}} 1_{E}}_{L^q_\nu(\ell^r_\omega)}^q.
	\end{split}
	\end{equation}
	Hence, we have
	\begin{equation*}
	\begin{split}
	\ell^{q'}_\nu(\ell^{r'}_\omega) (g 1_{F_k^c})(F) & \leq C \sum_{\widetilde{k}<k} 2^{\widetilde{k} (p-q)} ( \mu(F)^{-1} \norm{g_{\widetilde{k}} 1_{F}}^{q'}_{L^{q'}_\nu(\ell^{r'}_\omega)})^{\frac{1}{q'}} \\
	& \leq C \sum_{\widetilde{k}<k} 2^{\widetilde{k} (p-q)} ( \mu(F)^{-1} \norm{f_{\widetilde{k}} 1_{E}}_{L^q_\nu(\ell^r_\omega)})^{\frac{1}{q'}} \\
	&\leq C 2^{k(p-1)},
	\end{split}
	\end{equation*}
	where we used the quasi-triangle inequality for the outer $L^{q'}_\nu(\ell^{r'}_\omega)$ quasi-norm proved in \cite{2020arXiv200105903F} in the first inequality, \eqref{eq:upper_bound_norm_inner} in the second, the property in Proposition \ref{thm:atomic_decomposition_exterior_level_1_q_geq_r} corresponding to \eqref{eq:maximal_choice_exterior_level_1_no_partition} and \eqref{eq:parent_optimality} in the third. The previous chain of inequalities yields
	\begin{equation*}
	\mu( \ell^{q'}_\nu(\ell^{r'}_\omega) (g) > C 2^{k(p-1)}) \leq \mu(F_k) \leq \widetilde{C} \sum_{\widetilde{k} \geq k} \mu(E_{\widetilde{k}}).
	\end{equation*}
	Together with the property in Proposition \ref{thm:atomic_decomposition_exterior_level_1_q_geq_r} corresponding to \eqref{eq:outer_norm_as_size_and_measure_1_no_partition}, this yields
	\begin{align*}
	\norm{g}_{L^{p'}_\mu(\ell^{q'}_\nu(\ell^{r'}_\omega))}^{p'} &\leq \widetilde{C} \sum_{k \in \Z} 2^{kp} \mu( \ell^{q'}_\nu(\ell^{r'}_\omega) (g) > C 2^{k(p-1)}) \\
	&\leq \widetilde{C} \sum_{k \in \Z} 2^{kp} \sum_{\widetilde{k} \geq k} \mu(E_{\widetilde{k}}) \\
	&\leq \widetilde{C} \norm{f}_{L^p_\mu(\ell^q_\nu(\ell^r_\omega))}^p.
	\end{align*}
	
	{\textbf{Case II: $q < r$.}} Let $\widetilde{k}$ be fixed. It is enough to prove that, for every subset $F$ of $X$, we have
	\begin{equation} \label{eq:Lq_ellr_size_control} 
	\norm{g_{\widetilde{k}} 1_{F}}_{L^{q'}_\nu(\ell^{r'}_\omega)}^{q'} \leq C \norm{f_{\widetilde{k}} 1_{F}}_{L^q_\nu(\ell^r_\omega)}^q .
	\end{equation}
	The desired inequality in \eqref{eq:upper_bound} then follows as in the previous case.
	
	Let $j$ be fixed. Let $V(2^{j})$ be an optimal set associated with the super level measure $\nu(\ell^r_\omega(f_{\widetilde{k}} 1_{F})> 2^{j})$, namely
	\begin{gather} \label{eq:condition_super_level_interior_level_special_size}
	\norm{ f_{\widetilde{k}} 1_{F} 1_{V(2^{j})^c}}_{L^\infty_\nu(\ell^r_\omega)} \leq 2^{j}, \\ \label{eq:condition_super_level_interior_level_special_measure}
	\nu(\ell^r_\omega( f_{\widetilde{k}} 1_{F} )> 2^{j}) = \nu\big(V(2^{j})\big).
	\end{gather}  
	For every subset $U$ of $F$, we have
	\begin{align*}
	\ell^{r'}_\omega (g_{\widetilde{k}} 1_F 1_{V(2^{j})^c} ) (U) & \leq \sum_{\widetilde{j} < j}  2^{\widetilde{j} (q-r)} ( \nu(U)^{-1} \norm{g_{\widetilde{k},\widetilde{j}} 1_{U  \setminus V^{\widetilde{k}}_{\widetilde{j}+1}}  }^{r'}_{L^{r'}_\omega})^{\frac{1}{r'}} + \\
	& \qquad \qquad + ( \nu(U)^{-1} \norm{ \sum_{\widetilde{j} \geq j}  2^{\widetilde{j} (q-r)} g_{\widetilde{k},\widetilde{j}} 1_{F} 1_{U  \setminus V(2^{j})} }^{r'}_{L^{r'}_\omega})^{\frac{1}{r'}} \\
	& \leq \sum_{\widetilde{j} < j} 2^{\widetilde{j} (q-r)}  ( \nu(U)^{-1} \norm{f_{\widetilde{k},\widetilde{j}} 1_{U \setminus V^{\widetilde{k}}_{\widetilde{j}+1}} }^{r}_{L^{r}_\omega})^{\frac{1}{r'}} + \\
	& \qquad \qquad + 2^{j (q-r)} ( \nu(U)^{-1} \norm{ \sum_{\widetilde{j} \geq j} f_{\widetilde{k},\widetilde{j}} 1_{F} 1_{U  \setminus V(2^{j})} }^{r'}_{L^{r'}_\omega})^{\frac{1}{r'}} \\
	& \leq c 2^{j (q-1)} ,
	\end{align*}
	where we used the triangle inequality for the classical $L^{r'}$ norm in the first inequality, the condition $q<r$ in the second, \eqref{eq:maximal_choice_exterior_level_1_no_partition} and \eqref{eq:condition_super_level_interior_level_special_size} in the third. Together with \eqref{eq:condition_super_level_interior_level_special_measure}, the previous chain of inequalities yields, for every $j \in \Z$,
	\begin{equation*}
	\nu(\ell^{r'}_\omega( g_{\widetilde{k}} 1_{F} )> c2^{j(q-1)}) \leq \nu(\ell^r_\omega( f_{\widetilde{k}} 1_{F} )> 2^{j}).
	\end{equation*}
	The inequality in \eqref{eq:Lq_ellr_size_control} follows multiplying by $2^{jq}$ and summing in $j \in \Z$ on both sides.
\end{proof}

We are now ready to prove Theorem \ref{thm:Holder_triangular_2_step_iteration_finite}.

\begin{proof}[Proof of Theorem \ref{thm:Holder_triangular_2_step_iteration_finite}]
	When $q=r$, the double iterated outer $L^p$ quasi-norm is isomorphic to a single iterated one, and the proof corresponds to the one of properties $(ii)$, $(iii)$ of Theorem 1.1 in \cite{2020arXiv200105903F}.
	
	When $q \neq r$, we proceed as follows.
	
	{\textbf{Property (i).}} By \eqref{eq:collapsing_pairing}, the $L^1(X,\omega)$-pairing of two functions $f,g$ is equivalent to the outer $L^1_\mu(\ell^1_\nu(\ell^1_\omega))$ quasi-norm of the product $fg$. The first inequality in \eqref{eq:Holder_2_iterated} is then given by outer H\"{o}lder's inequality, Proposition 3.4 in \cite{MR3312633}. The second inequality in \eqref{eq:Holder_2_iterated} is a corollary of Lemma \ref{thm:lower_bound} and Lemma \ref{thm:upper_bound} for $f \in L^p_\mu(\ell^q_\nu(\ell^r_\omega))$.
	
	{\textbf{Property (ii).}} The inequality in \eqref{eq:triangle_2_iterated} is a corollary of the triangle inequality for the $L^1(X,\omega)$ norm and property $(i)$.
\end{proof}

\subsection{Counterexamples} \label{subsec:counterexamples}

For every $m \in \N$, we introduce the finite setting 
\begin{align*}
X_m & = \{ x_i \colon 1 \leq i \leq m \}, \\
\omega_m (A) & = \mu_m(A) = \abs{A}, \qquad && \textrm{for every $A \subseteq X_m$,}\\
\nu_m(A) & = 1, \qquad && \textrm{for every $\varnothing \neq A \subseteq X_m$,} \\
f_i & = 1_{x_{i}}, \qquad && \textrm{for every $1 \leq i \leq m$,} \\
f & = 1_{X_m}.
\end{align*}
In particular, the collection of singletons $\{ \{ x_i \} \colon 1 \leq i \leq m \}$ satisfies the $\nu_m$-Carath\'{e}odory condition with parameter $K_m \geq m$. 

First, we observe that, for every exponent $r \in (0,\infty]$, for every function $g$, for every nonempty subset $A$ of $X_m$, we have
\begin{equation*}
\ell^r_{\omega_m}(g)(A) = \norm{g 1_A}_{L^r(X_m, \omega_m)}.
\end{equation*}
Therefore, for every exponent $r \in (0,\infty]$, for every function $g$, we have
\begin{equation*}
\nu_m(\ell^r_{\omega_m}(g) > \lambda) = \begin{dcases}
\nu_m(X_m) = 1, \qquad & \textrm{for $\lambda \in [0, \norm{g}_{L^\infty_{\nu_m}(\ell^r_{\omega_m})})$,} \\
\nu_m(\varnothing) = 0, \qquad & \textrm{for $\lambda \in [ \norm{g}_{L^\infty_{\nu_m}(\ell^r_{\omega_m})}, \infty)$,}
\end{dcases}
\end{equation*}
where, here and later as well, for every level $\lambda$, we provide a subset of $X_m$ realizing the infimum in the definition of the super level measure in \eqref{eq:super_level_measure}. 

Hence, for every exponents $q,r \in (0,\infty]$, we have
\begin{equation*}
\norm{g}_{L^q_{\nu_m}(\ell^r_{\omega_m})} = \norm{g}_{L^\infty_{\nu_m}(\ell^r_{\omega_m})} = \norm{g}_{L^r(X_m,\omega_m)}.
\end{equation*}

In particular, for every exponent $r \in (0,\infty]$, we have
\begin{gather*}
\sum_{i=1}^m \norm{f_i}_{L^1_{\nu_m}(\ell^r_{\omega_m})} = \sum_{i=1}^m 1 = m, \\ 
\norm{\sum_{i=1}^m f_i}_{L^1_{\nu_m}(\ell^r_{\omega_m})} = \norm{f}_{L^1_{\nu_m}(\ell^r_{\omega_m})} = m^{\frac{1}{r}}.
\end{gather*}
When $r \in (0, \infty]$, $r \neq 1$, one of the constants $C_1,C_2$ of super- or $q$-suborthogonality in \eqref{eq:L_q_orthogonality} blows up as $m$ grows to infinity.

Next, we observe that, for every exponents $q,r \in (0, \infty]$, for every function $g$, for every nonempty subset $A$ of $X_m$, we have
\begin{equation*}
\ell^q_{\nu_m}(\ell^r_{\omega_m})(g)( A ) = \mu_{m}(A)^{-\frac{1}{q}} \norm{g 1_A}_{L^q_{\nu_m}(\ell^r_{\omega_m})} = \abs{A}^{-\frac{1}{q}} \norm{g 1_A}_{L^r(X_m,\omega_m)}, 
\end{equation*}
hence, for every exponent $r \in [1, \infty]$, for every strict subset $B$ of $X_m$, we have
\begin{equation*}
\norm{f 1_{B^c}}_{L^\infty_{\mu_m}(\ell^1_{\nu_m}(\ell^r_{\omega_m}))} = 1 = \ell^1_{\nu_m}(\ell^r_{\omega_m})(f 1_{B^c})(\{ x_i \}), \qquad \textrm{for every $x_i \notin B$.}
\end{equation*}
Therefore, for every exponent $r \in [1, \infty]$, we have
\begin{equation*}
\mu_m(\ell^1_{\nu_m}(\ell^r_{\omega_m})(f) > \lambda) = \begin{dcases}
\mu_m(X_m) = m, \qquad & \textrm{for $\lambda \in [0, 1)$,} \\
\mu_m(\varnothing) = 0, \qquad & \textrm{for $\lambda \in [ 1, \infty)$.}
\end{dcases}
\end{equation*}

In particular, for every exponent $r \in [1,\infty]$, we have
\begin{equation*}
\norm{f}_{L^1_{\mu_m}(\ell^1_{\nu_m}(\ell^r_{\omega_m}))} = m.
\end{equation*}
When $r \in (1,\infty]$, the constant $C_2$ of the "collapsing effect" in \eqref{eq:collapsing_2_iterated} blows up as $m$ grows to infinity.

Finally, we observe that, for every exponents $q \in (1, \infty)$, $r \in (1, q]$, for every strict subset $B$ of $X_m$, we have
\begin{equation*}
\norm{f 1_{B^c}}_{L^\infty_{\mu_m}(\ell^q_{\nu_m}(\ell^r_{\omega_m}))} = \abs{X_m \setminus B}^{\alpha} = \ell^q_{\nu_m}(\ell^r_{\omega_m})(f 1_{B^c})(B^c),
\end{equation*}
where $\alpha= \alpha(r,q)= \frac{1}{r}-\frac{1}{q}$. Therefore, for every exponents $q \in (1, \infty), r \in (1, q]$, we have, for $1 \leq i \leq m$,
\begin{equation*}
\mu_m(\ell^q_{\nu_m}(\ell^r_{\omega_m})(f) > \lambda) = \begin{dcases}
\mu_m(X^{m-i+1}_{m}) = m-i+1, \qquad & \textrm{for $\lambda \in [(i-1)^{\alpha}, i^{\alpha})$,} \\
\mu_m(\varnothing) = 0, \qquad & \textrm{for $\lambda \in [ m^{\alpha}, \infty)$,}
\end{dcases}
\end{equation*}
where $X^{j}_m$ is any arbitrary subset of $X_m$ of cardinality $j$. 

In particular, for every exponents $p,q \in (1, \infty)$, $r \in (1,q]$, there exists a constant $c=c(p,q,r)$ such that, for every $m \in \N$ big enough, we have
\begin{gather*}
\sum_{i=1}^m \norm{f_i}_{L^p_{\mu_m}(\ell^q_{\nu_m}(\ell^r_{\omega_m}))} = \sum_{i=1}^m 1 = m, \\ 
\norm{\sum_{i=1}^m f_i}_{L^p_{\mu_m}(\ell^q_{\nu_m}(\ell^r_{\omega_m}))} = \norm{f}_{L^p_{\mu_m}(\ell^q_{\nu_m}(\ell^r_{\omega_m}))} \geq c m^{\frac{1}{p}-\frac{1}{q}+\frac{1}{r}}.
\end{gather*}
Therefore, the constants of the sharpness of outer H\"{o}lder's inequality in \eqref{eq:Holder_2_iterated} and the triangle inequality in \eqref{eq:triangle_2_iterated} blow up as $m$ grows to infinity when
\begin{equation*}
p,q,r \in (1, \infty), \qquad \frac{1}{p} - \frac{1}{q} +\frac{1}{r} > 1.
\end{equation*}

Now, for every $m \in \N$, we slightly modify the previous finite setting
\begin{align*}
X_m & = \{ x_i \colon 1 \leq i \leq m \}, \\
\omega_m (A) & = \abs{A}, \qquad && \textrm{for every $A \subseteq X_m$,}\\
\nu_m(A) & = 1, \qquad && \textrm{for every $A \subseteq X_m$,} \\
\sigma_m (\{x_i\}) & = 2^{\beta(i-1)}, \qquad && \textrm{for every $1 \leq i \leq m$,}\\
f & = 1_{X_m},
\end{align*}
where $\beta = \beta(r) = \frac{2}{r}$, and let $\mu_m$ be the measure generated via \eqref{eq:outer_measure_from_pre_measure} from $\sigma_m$. As in the previous setting, the collection of singletons $\{ \{ x_i \} \colon 1 \leq i \leq m \}$ satisfies the $\nu_m$-Carath\'{e}odory condition with parameter $K_m \geq m$. 

As in the previous setting, for every exponents $q,r \in (0, \infty]$, for every function $g$, for every nonempty subset $A$ of $X_m$, we have
\begin{equation*}
\ell^q_{\nu_m}(\ell^r_{\omega_m})(g)( A ) = \mu_{m}(A)^{-\frac{1}{q}} \norm{g 1_A}_{L^q_{\nu_m}(\ell^r_{\omega_m})} = \mu_{m}(A)^{-\frac{1}{q}} \norm{g 1_A}_{L^r(X_m,\omega_m)}, 
\end{equation*}
hence, for every exponent $r \in (0, 1]$, for every strict subset $B$ of $X_m$, we have
\begin{equation*}
\norm{f 1_{B^c}}_{L^\infty_{\mu_m}(\ell^1_{\nu_m}(\ell^r_{\omega_m}))} = 2^{-\beta(j-1)} = \ell^1_{\nu_m}(\ell^r_{\omega_m})(f 1_{B^c})(\{ x_j \}), 
\end{equation*}
where $j = \min \{ i \colon 1 \leq i \leq m, x_i \notin B \}$. Therefore, for every exponent $r \in (0,1]$, we have, for $1 \leq j < m$,
\begin{equation*}
\mu_m(\ell^q_{\nu_m}(\ell^r_{\omega_m})(f) > \lambda) = \begin{dcases}
\mu_m(X_{m}) = \sum_{i=1}^{m} 2^{\beta(i-1)}, \qquad & \textrm{for $\lambda \in [0, 2^{-\beta(m-1)})$,} \\
\mu_m(X^{j}_{m}) = \sum_{i=1}^{j} 2^{\beta(i-1)}, \qquad & \textrm{for $\lambda \in [2^{-\beta j}, 2^{-\beta(j-1)})$,} \\
\mu_m(\varnothing) = 0, \qquad & \textrm{for $\lambda \in [ 1, \infty)$,}
\end{dcases}
\end{equation*}
where $X^{j}_m = \{ x_i \colon 1 \leq i \leq j \} \subseteq X_m$.

In particular, for every exponent $r \in (0,1]$, there exists a constant $C=C(r)$ such that we have
\begin{gather*}
\norm{f}_{L^1_{\nu_m}(\ell^r_{\omega_m})} = m^{\frac{1}{r}}, \\
\norm{f}_{L^1_{\mu_m}(\ell^1_{\nu_m}(\ell^r_{\omega_m}))} \leq C m.
\end{gather*}
When $r \in (0,1)$, the constant $C_1$ of the "collapsing effect" in \eqref{eq:collapsing_2_iterated} blows up as $m$ grows to infinity.

\section{Examples} \label{sec:examples}

In this section we exhibit three settings in which we provide a $\mu$-covering function $\mathcal{C}$ satisfying the canopy condition \ref{def:parent_collection_nesting} and the crop condition \ref{def:anti_stacking_compatibility}.

\subsection{Finite set with three measures} Let $X$ be a finite set, $\mu, \nu, \omega$ be three measures on it. The function $\mathcal{C}$ defined by
\begin{equation*}
\mathcal{E} = \{ \{ x \} \colon x \in X \}, \qquad \qquad \mathcal{C}(A) = \{ \{ x \} \colon x \in A \},
\end{equation*}
is a $\mu$-covering function with parameter $\Phi = 1$. The canopy and the crop conditions with parameters $\Phi=K=1$ are satisfied because every collection of pairwise disjoint subsets of $X$ is $\nu$-Carath\'{e}odory with parameter $K = 1$, since $\nu$ is a measure, and the very definition of $\mathcal{C}$. The same conditions are satisfied by 
\begin{equation*}
\mathcal{E}' = \mathcal{P}(X), \qquad \qquad \mathcal{C}'(A) = A.
\end{equation*}

\subsection{Cartesian product of three finite sets with measures} Let $X_1,X_2,X_3$ be finite sets with measures $\omega_1,\omega_2,\omega_3$. Let $\mu,\nu,\omega$ be the outer measures $\mu_1,\mu_2,\mu_3$ defined on $X$ as in \eqref{eq:cartesian_product_outer_measure}. The function $\mathcal{C}$ defined by
\begin{equation*}
\mathcal{E} = \{ X_1 \times X_2 \times \{ z \} \colon z \in X_3 \}, \qquad \qquad \mathcal{C}(A) = \{ X_1 \times X_2 \times \{ z \} \colon z \in \pi_3(A) \},
\end{equation*}
where $\pi_3$ is the projection in $X_3$, is a $\mu$-covering function with parameter $\Phi = 1$. The canopy and the crop conditions with parameters $\Phi=K=1$ are satisfied because every collection of disjoint subsets of $X$ of the form $X_1 \times X_2 \times Z$ is $\nu$-Carath\'{e}odory with parameter $K = 1$, since on these sets $\nu$ behaves like the measure $\omega_2 \otimes \omega_3$, and the very definition of $\mathcal{C}$. The same conditions are satisfied by 
\begin{equation*}
\mathcal{E}' = \{ X_1 \times X_2 \times Z \colon Z \in \mathcal{P}(X_3) \}, \qquad \qquad \mathcal{C}'(A) = X_1 \times X_2 \times \pi_3(A).
\end{equation*}

\subsection{Upper half $3$-space with dyadic strips and trees} \label{subsec:3spacedyadic}

Let $X$ be the upper half $3$-space, together with the measure induced by the Lebesgue measure on $\R^3$,
\begin{equation} \label{eq:upper_3_space_dyadic_setting_weight}
\begin{split}
& X = \R^3_+ = \R^2_+ \times \R = \R \times (0,\infty) \times \R, \\
& \diff \omega(y,t,\eta) = \diff y \diff t \diff \eta.
\end{split}
\end{equation}
To define the outer measures, we start recalling the set $\mathcal{I}$ of dyadic intervals in $\R$,
\begin{equation*}
\begin{split}
& I(m,l) = ( 2^l m, 2^l (m+1) ], \\
& \mathcal{I} = \{ I(m,l) \colon m,l \in \Z \}.
\end{split}
\end{equation*}
Moreover, for every $m,l,n \in \Z$, we define the dyadic upper half tile $H(m,l,n)$ by
\begin{equation} \label{eq:dyadic_upper_half_tile}
H(m,l,n) = I(m,l) \times (2^{l-1}, 2^l] \times I(n,-l).
\end{equation}

Now, let $\mu$ be the outer measure generated by the pre-measure $\sigma$ on $\mathcal{D}$, the collection of dyadic strips, as in \eqref{eq:outer_measure_from_pre_measure}, namely
\begin{equation} \label{eq:upper_3_space_dyadic_setting_strip}
\begin{split}
& D(m,l) = D\big(I(m,l)\big) = \bigcup_{l' \leq l} \bigcup_{m' = 2^{l-l'} m}^{2^{l-l'} (m+1)-1} \bigcup_{n' \in \Z} H(m',l',n'), \\
& \mathcal{D} = \{ D(m,l) \colon m,l \in \Z \} = \{ D(I) \colon I \in \mathcal{I} \}, \\
& \sigma\big(D(m,l)\big) = \abs{I(m,l)} = 2^l, \qquad \qquad \text{for every $m,l \in \Z$.}
\end{split}
\end{equation}
Analogously, let $\nu$ be the outer measure generated by the pre-measure $\tau$ on $\mathcal{T}$, the collection of dyadic trees, as in \eqref{eq:outer_measure_from_pre_measure}, namely
\begin{equation} \label{eq:upper_3_space_dyadic_setting_tree}
\begin{split}
& T(m,l,n) = T\big(I(m,l), I(n,-l)\big) = \bigcup_{l' \leq l} \bigcup_{m' = 2^{l-l'} m}^{2^{l-l'} (m+1)-1} H\big(m',l',N(n,l')\big), \\
& \mathcal{T} = \{ T(m,l,n) \colon m,l,n \in \Z \} = \{ T(I,\widetilde{I}) \colon I,\widetilde{I} \in \mathcal{I}, \abs{I} \abs{\widetilde{I}} = 1 \}, \\
& \tau\big(T(m,l,n )\big) = \abs{I(m,l)} = 2^l, \qquad \qquad \text{for every $m,l,n \in \Z,$}
\end{split}
\end{equation}
where $N(n,l')$ is defined by the condition
\begin{equation} \label{eq:tree_condition}
I(n,-l) \subseteq I(N(n,l'),-l').
\end{equation} 
From now on, we assume all the strips and trees in this subsection to be dyadic, and we avoid repeating it. 

Next, for every $L \in \Z$, we define
\begin{equation} \label{eq:Y_l}
Y_L = \R \times (0,2^L] \times \R, 
\end{equation}
On $Y_L$, we have the measure $\omega_L$ and the outer measures $\mu_L, \nu_L$ induced by $\omega, \mu, \nu$. In particular, the outer measures $\mu_L, \nu_L$ are equivalently generated as in \eqref{eq:outer_measure_from_pre_measure} by the pre-measures $\sigma, \tau$ restricting the collections of dyadic strips and trees to those contained in $Y_L$, namely
\begin{align*}
& \mathcal{D}_L = \{ D(m,l) \colon m,l \in \Z, l \leq L  \}, \\
& \mathcal{T}_L = \{ T(m,l,n) \colon m,l,n \in \Z, l \leq L \}.
\end{align*}
Moreover, we drop the subscript $L$ in all the notation, as the definitions are consistent with the inclusion $Y_{L_1} \subseteq Y_{L_2}$ for $L_1 \leq L_2$.

To define the function $\mathcal{C}$ and check that it satisfies the conditions, we recall some properties of the geometry of dyadic strips and trees and introduce some auxiliary functions and state their properties. We postpone the proofs to Appendix \ref{sec:dyadic_geometry}. 

To make the notation more compact in the following definitions, we introduce a new symbol for the union of the elements of a collection of subsets of $X$,
\begin{gather*}
\mathcal{L} \colon \mathcal{P}(\mathcal{P}(X)) \to \mathcal{P}(X), \\
\mathcal{L}(\mathcal{A}) = \bigcup_{A \in \mathcal{A}} A.
\end{gather*}

We start with two observations about the geometry of the intersections between strips, and between a strip and a tree.
\begin{lemma} \label{thm:intersection_strip_strip}
	Given two strips $D_1,D_2$ in $\mathcal{D}$, their intersection is again a strip in $\mathcal{D}$, possibly empty. If it is nonempty, we have either $D_1 \subseteq D_2$ or $D_2 \subseteq D_1$.
\end{lemma}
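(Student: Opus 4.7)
The plan is to reduce the statement to the well-known dichotomy for dyadic intervals in $\R$: two dyadic intervals $I_1, I_2 \in \mathcal{I}$ are either disjoint or one is contained in the other. Once this is transferred to strips via the product structure, the result is immediate.

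The first step is to unpack the definition of a dyadic strip in \eqref{eq:upper_3_space_dyadic_setting_strip} and rewrite it in a more transparent product form. Specifically, for a dyadic interval $I = I(m,l)$, the union over $l' \leq l$, $m' \in \{2^{l-l'}m, \dots, 2^{l-l'}(m+1)-1\}$, and $n' \in \Z$ of the tiles $H(m',l',n')$ from \eqref{eq:dyadic_upper_half_tile} collapses to
\begin{equation*}
D(I) = I \times (0, \abs{I}] \times \R,
\end{equation*}
since the $m'$-union reconstructs $I$ at scale $2^{l'}$, the $l'$-union sweeps $t$ through $(0, 2^l]$, and the $n'$-union covers $\R$ in the $\eta$-coordinate. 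I would verify this identity carefully, as it is the only nontrivial computation.

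The second step is to invoke the dyadic trichotomy: for $I_1, I_2 \in \mathcal{I}$, either $I_1 \cap I_2 = \varnothing$, or $I_1 \subseteq I_2$, or $I_2 \subseteq I_1$. In the first case, the product representation immediately gives $D(I_1) \cap D(I_2) = \varnothing$, which is vacuously a (possibly empty) strip, and there is nothing more to show. In the second case, assuming without loss of generality $I_1 \subseteq I_2$ (so $\abs{I_1} \leq \abs{I_2}$), the product representation yields
\begin{equation*}
D(I_1) \cap D(I_2) = (I_1 \cap I_2) \times \big((0, \abs{I_1}] \cap (0, \abs{I_2}]\big) \times \R = I_1 \times (0, \abs{I_1}] \times \R = D(I_1),
\end{equation*}
so the intersection is the smaller strip $D(I_1) \in \mathcal{D}$ and clearly $D(I_1) \subseteq D(I_2)$. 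This exhausts all cases and establishes the lemma. No genuine obstacle arises; the only care needed is in the bookkeeping that converts the tile-union definition of $D(I)$ into the clean product form.
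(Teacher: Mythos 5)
Your proof is correct and follows essentially the same route as the paper: the paper also first records the product form $D = \pi(D) \times (0,\abs{\pi(D)}] \times \R$ (its equation \eqref{eq:strip}) and then applies the dyadic interval dichotomy to $\pi(D_1),\pi(D_2)$ to conclude containment of the strips. The only cosmetic difference is that you make the verification of the product form an explicit step, whereas the paper states it as a standing identity at the start of the appendix.
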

\begin{lemma} \label{thm:intersection_strip_tree}
	Given a strip $D$ in $\mathcal{D}$ and a tree $T$ in $\mathcal{T}$, their intersection is again a tree $T'$ in $\mathcal{T}$, possibly empty.
\end{lemma}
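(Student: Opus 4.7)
The plan is to make the geometry of the two sets explicit and then to perform a case split according to the relative position of the dyadic base intervals. Concretely, I would first unwind the definitions in \eqref{eq:upper_3_space_dyadic_setting_strip} and \eqref{eq:upper_3_space_dyadic_setting_tree} to observe that, for $I_D = I(m,l)$,
\begin{equation*}
D(I_D) = I_D \times (0, \abs{I_D}] \times \R,
\end{equation*}
while the tree $T = T(I_T, \widetilde{I}_T)$ with $\abs{I_T}\abs{\widetilde{I}_T} = 1$ consists of those $(y,t,\eta)$ with $y \in I_T$, $t \in (0,\abs{I_T}]$, and, at the unique $l'$ with $t \in (2^{l'-1}, 2^{l'}]$, the constraint $\eta \in I(N_T,-l')$, where $I(N_T,-l')$ is the unique dyadic interval of length $2^{-l'}$ containing $\widetilde{I}_T$. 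This description is really the crux: a tree refines the corresponding strip by adding an $\eta$-constraint that is consistent across the dyadic scales in $\widetilde{I}_T$.

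Next, because dyadic intervals are either disjoint or nested, exactly one of three possibilities occurs for $I_D$ and $I_T$. If $I_D \cap I_T = \varnothing$ then the projections on the $y$-coordinate are disjoint, hence $D \cap T = \varnothing$. If $I_T \subseteq I_D$, then $\abs{I_T} \leq \abs{I_D}$ and $(0,\abs{I_T}] \subseteq (0,\abs{I_D}]$, so $T \subseteq D$ and $D \cap T = T$, which is already a tree in $\mathcal{T}$.

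The substantive case is $I_D \subsetneq I_T$. I would define $\widetilde{I}'$ to be the unique dyadic interval of length $1/\abs{I_D}$ containing $\widetilde{I}_T$; such an interval exists since $\abs{\widetilde{I}_T} = 1/\abs{I_T} \leq 1/\abs{I_D}$. The claim is that $D \cap T = T(I_D, \widetilde{I}')$. For the inclusion $T(I_D,\widetilde{I}') \subseteq D \cap T$, note $T(I_D,\widetilde{I}') \subseteq D(I_D) = D$ by the previous case, and the key observation is that for any scale $l'$ with $2^{l'} \leq \abs{I_D}$, the dyadic interval of length $2^{-l'}$ containing $\widetilde{I}'$ coincides with that containing $\widetilde{I}_T$, simply because the chain of ancestors of a nested dyadic interval is unique; this matches the $\eta$-constraints defining $T$. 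Conversely, any $(y,t,\eta) \in D \cap T$ satisfies $y \in I_D$, $t \in (0,\abs{I_D}]$, and the $\eta$-condition from $T$, which by the same nesting observation is exactly the $\eta$-condition defining $T(I_D, \widetilde{I}')$.

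The only delicate point is the bookkeeping at the dyadic scales in the $\eta$-coordinate: one must verify that the restriction of the $\eta$-constraint of $T$ to heights $t \leq \abs{I_D}$ factors through $\widetilde{I}'$ rather than $\widetilde{I}_T$. This follows immediately from the fact that for $l' \leq \log_2\abs{I_D}$, the dyadic interval of length $2^{-l'}$ containing $\widetilde{I}_T$ equals the one containing $\widetilde{I}'$, so no other genuine obstacle arises.
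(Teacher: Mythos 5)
Your proof is correct and follows essentially the same route as the paper's: both reduce to the trichotomy for the dyadic base intervals $I_D, I_T$ (disjoint, $I_T \subseteq I_D$, or $I_D \subsetneq I_T$) and in the last case identify $D \cap T$ as the tree with base $I_D$ and frequency interval the dyadic ancestor of $\widetilde{I}_T$ of length $\abs{I_D}^{-1}$, i.e. $T\bigl(\pi(D),\widetilde{J}(T,\pi(D))\bigr)$. You merely spell out in more detail the scale-by-scale verification of the $\eta$-constraint that the paper compresses into an appeal to \eqref{eq:strip} and \eqref{eq:tree}.
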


After that, we follow up with some observations about the behaviour of the outer measures $\mu,\nu$ on strips, trees, their unions and their intersections.

\begin{lemma} \label{thm:measure_single}
For every strip $D$ in $\mathcal{D}$ and for every tree $T$ in $\mathcal{T}$, we have
\begin{gather} 
\label{eq:measure_strip}
\mu(D) = \sigma(D) = \abs{\pi(D)}, \\
\label{eq:measure_tree}
\nu(T) = \tau(T) = \abs{\pi(T)},
\end{gather}
where $\pi$ is the projection in the first coordinate.

Moreover, for every tree $T$ in $\mathcal{T}$, we have
\begin{equation} \label{eq:nu_as_mu}
\nu(T) =  \abs{\pi(T)} = \abs{\pi\big(D(T)\big)} = \mu\big(D(T)\big),
\end{equation}
where $D(T)$ is the strip in $\mathcal{D}$ containing $T$ defined by
\begin{equation*}
D(T) = \pi(T) \times (0, \abs{\pi(T)}] \times \R.
\end{equation*}
\end{lemma}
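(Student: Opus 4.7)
The plan is to handle the upper and lower bounds in \eqref{eq:measure_strip} and \eqref{eq:measure_tree} separately, and then derive \eqref{eq:nu_as_mu} as a direct consequence. First I would verify the equalities $\sigma(D)=\abs{\pi(D)}$ for $D=D(m,l)\in\mathcal{D}$ and $\tau(T)=\abs{\pi(T)}$ for $T=T(m,l,n)\in\mathcal{T}$ from the very definitions in \eqref{eq:upper_3_space_dyadic_setting_strip} and \eqref{eq:upper_3_space_dyadic_setting_tree}: in both cases the value of the pre-measure is $\abs{I(m,l)}=2^l$, while unwinding the unions in the definition gives $\pi(D(m,l))=I(m,l)$ and $\pi(T(m,l,n))=\bigcup_{l'\leq l}\bigcup_{m'=2^{l-l'}m}^{2^{l-l'}(m+1)-1} I(m',l')=I(m,l)$.

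Next I would establish the upper bounds. Since $D$ is covered by the singleton collection $\{D\}\subseteq\mathcal{D}$ and analogously for $T$, the definition \eqref{eq:outer_measure_from_pre_measure} immediately yields $\mu(D)\leq\sigma(D)$ and $\nu(T)\leq\tau(T)$.

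The main step is the lower bounds. The key idea is a projection argument: if $\mathcal{D}'\subseteq\mathcal{D}$ covers $D$, then by monotonicity of the projection $\pi$ onto the first coordinate we have $\pi(D)\subseteq\bigcup_{D'\in\mathcal{D}'}\pi(D')$, and hence by countable subadditivity of the Lebesgue measure on $\R$,
\begin{equation*}
\abs{\pi(D)}\leq\sum_{D'\in\mathcal{D}'}\abs{\pi(D')}=\sum_{D'\in\mathcal{D}'}\sigma(D').
\end{equation*}
Taking the infimum over all such coverings gives $\abs{\pi(D)}\leq\mu(D)$, and the same argument with $\mathcal{T}',\tau$ in place of $\mathcal{D}',\sigma$ yields $\abs{\pi(T)}\leq\nu(T)$, using the identity $\tau(T')=\abs{\pi(T')}$ for every $T'\in\mathcal{T}$ established in the first step. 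This completes \eqref{eq:measure_strip} and \eqref{eq:measure_tree}.

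Finally, for \eqref{eq:nu_as_mu}, observe that for $T=T(m,l,n)$ the set $D(T)=\pi(T)\times(0,\abs{\pi(T)}]\times\R$ equals the strip $D(m,l)\in\mathcal{D}$, so that $T\subseteq D(T)$ and $\pi(D(T))=\pi(T)=I(m,l)$. Applying \eqref{eq:measure_strip} to $D(T)$ and \eqref{eq:measure_tree} to $T$ then yields the chain of equalities. The only subtlety I foresee is making the projection argument rigorous when the covering collection is countable rather than finite, but this is handled by the standard countable subadditivity of the Lebesgue measure; all the geometric content is contained in the identities $\sigma(D')=\abs{\pi(D')}$ and $\tau(T')=\abs{\pi(T')}$, which are essentially tautological in this dyadic setting.
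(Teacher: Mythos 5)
Your proposal is correct, and all the bookkeeping steps (the identities $\sigma(D)=\abs{\pi(D)}$, $\tau(T)=\abs{\pi(T)}$, the trivial upper bounds from the singleton covering, and the deduction of \eqref{eq:nu_as_mu} from the first two displays) match the paper. The one substantive step, the lower bound $\mu(D)\geq\sigma(D)$ (resp.\ $\nu(T)\geq\tau(T)$), is handled by a genuinely different argument. You project an arbitrary covering $\mathcal{D}'$ onto the first coordinate and invoke countable subadditivity of the Lebesgue measure on $\R$ to get $\abs{\pi(D)}\leq\sum_{D'\in\mathcal{D}'}\abs{\pi(D')}=\sum_{D'\in\mathcal{D}'}\sigma(D')$; this is sound, since $\mathcal{D}$ is countable and $T\subseteq\bigcup T'$ does imply $\pi(T)\subseteq\bigcup\pi(T')$. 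The paper instead exploits the fact that strips and trees are anchored at $t=0$: the single point $(x_D,\abs{\pi(D)},0)$ (resp.\ $(x_T,\abs{\pi(T)},\xi_T)$ for trees) must lie in some element $E$ of the covering, which forces $\sigma(E)\geq\abs{\pi(D)}$, so one covering element alone already accounts for the full pre-measure. Your projection argument is the more robust of the two — it is exactly the mechanism the paper itself deploys for the additivity statement in Lemma \ref{thm:measure_disjoint_union} and for \eqref{eq:arbitrary_dyadic} — whereas the paper's top-point argument gives the slightly sharper geometric information that a single strip of the covering must dominate, not just the sum. Either route is acceptable here.
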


\begin{lemma} \label{thm:measure_disjoint_union}
For every collection $\mathcal{D}_1$ of pairwise disjoint strips in $\mathcal{D}$, we have
\begin{equation} \label{eq:additivity_mu}
\mu \big( \mathcal{L}(\mathcal{D}_1) \big) = \sum_{D_1 \in \mathcal{D}_1} \mu(D_1) = \sum_{D_1 \in \mathcal{D}_1} \abs{\pi(D_1)}.
\end{equation}
Analogously, for every collection $\mathcal{T}_1$ of pairwise disjoint trees in $\mathcal{T}$, we have
\begin{equation} \label{eq:additivity_nu}
\nu \big( \mathcal{L}(\mathcal{T}_1) \big) = \sum_{T_1 \in \mathcal{T}_1} \nu(T_1) = \sum_{T_1 \in \mathcal{T}_1} \abs{\pi(T_1)}.
\end{equation}
Moreover, for every collection $\mathcal{D}_1$ of pairwise disjoint strips in $\mathcal{D}$, for every tree $T$ in $\mathcal{T}$, we have
\begin{equation} \label{eq:additivity_nu_disjoint_strips}
\nu\big(T \cap \mathcal{L}(\mathcal{D}_1) \big) = \sum_{D_1 \in \mathcal{D}_1} \nu(T \cap D_1).
\end{equation}

\end{lemma}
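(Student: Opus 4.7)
The plan is to establish each of the three identities by combining the subadditivity upper bound (which together with Lemma \ref{thm:measure_single} gives $\mu(\mathcal{L}(\mathcal{D}_1)) \leq \sum_{D_1} \mu(D_1)$, and similarly for $\nu$) with a matching lower bound applied to an arbitrary cover from \eqref{eq:outer_measure_from_pre_measure}. In each case, the lower bound is obtained by testing the cover against a concrete subset of $\mathcal{L}(\mathcal{D}_1)$ or $\mathcal{L}(\mathcal{T}_1)$ whose Lebesgue content reflects the hypothesized disjointness.

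For \eqref{eq:additivity_mu}, Lemma \ref{thm:intersection_strip_strip} together with the dyadic dichotomy reduces pairwise disjointness of $\mathcal{D}_1$ to pairwise disjointness of the base intervals $\{\pi(D_1)\}_{D_1 \in \mathcal{D}_1}$. Given any cover $\{D(I'_k)\}_k \subseteq \mathcal{D}$ of $\mathcal{L}(\mathcal{D}_1)$, applying $\pi$ yields $\bigcup_k I'_k \supseteq \bigsqcup_{D_1} \pi(D_1)$, so subadditivity of Lebesgue measure gives $\sum_k \sigma(D(I'_k)) = \sum_k |I'_k| \geq \sum_{D_1} |\pi(D_1)|$; taking the infimum over the cover and invoking Lemma \ref{thm:measure_single} proves \eqref{eq:additivity_mu}.

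For \eqref{eq:additivity_nu}, the first-coordinate projection alone is not enough, since disjoint trees can share base intervals. I would instead use the top tile $\mathrm{Top}(T(I, \widetilde{I})) := I \times (|I|/2, |I|] \times \widetilde{I}$, which sits inside $T$, has $\omega$-measure $\nu(T)/2$, and remains disjoint across any pair of disjoint trees. Writing the elements of $\mathcal{T}_1$ as $\{T_i = T(I_i, \widetilde{I}_i)\}_i$ and an arbitrary cover as $\{T'_k = T(I'_k, \widetilde{I}'_k)\}_k$, a direct scale-by-scale inspection of \eqref{eq:upper_3_space_dyadic_setting_tree} shows that $T'_k \cap \mathrm{Top}(T_i)$ is either empty or equal to $\mathrm{Top}(T_i)$, the latter happening precisely when $I_i \subseteq I'_k$ and $\widetilde{I}'_k \subseteq \widetilde{I}_i$. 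Letting $S_k$ denote this index set, subadditivity of $\omega$ applied to the cover of $\bigsqcup_i \mathrm{Top}(T_i)$ yields $\sum_k \sum_{i \in S_k} |I_i|/2 \geq \sum_i |I_i|/2$.

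The main step, and the main obstacle, is the pairwise disjointness of $\{I_i\}_{i \in S_k}$ inside $I'_k$ for each $k$: if $I_i \subseteq I_j$ for $i \neq j$ in $S_k$, then $\widetilde{I}'_k \subseteq \widetilde{I}_j \subseteq \widetilde{I}_i$ (the second inclusion by the dyadic dichotomy) and a second scale-$l_i$ inspection of $T_j$ produces $\mathrm{Top}(T_i) \subseteq T_j$, contradicting $T_i \cap T_j = \varnothing$. This disjointness gives $\sum_{i \in S_k} |I_i| \leq |I'_k|$, and the preceding chain of inequalities upgrades to $\sum_k |I'_k| \geq \sum_i |I_i|$, proving \eqref{eq:additivity_nu}. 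Finally, \eqref{eq:additivity_nu_disjoint_strips} is immediate: by Lemma \ref{thm:intersection_strip_tree} each $T \cap D_1$ lies in $\mathcal{T}$, pairwise disjointness of $\mathcal{D}_1$ descends to $\{T \cap D_1\}_{D_1 \in \mathcal{D}_1}$, and $T \cap \mathcal{L}(\mathcal{D}_1) = \mathcal{L}(\{T \cap D_1\})$, so \eqref{eq:additivity_nu} applies.
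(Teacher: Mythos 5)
Your proof is correct. For \eqref{eq:additivity_mu} you and the paper both reduce the lower bound to the one-dimensional Lebesgue measure of the base intervals: the paper does it by splitting $\mathcal{D}_1$ according to which element of an arbitrary covering contains each strip (after a maximality reduction via Lemma \ref{thm:intersection_strip_strip}), while you simply project the covering and use that disjointness of strips is equivalent to disjointness of their bases --- same content, marginally more direct. The genuine divergence is in \eqref{eq:additivity_nu}: the paper only asserts that ``an analogous argument'' applies, adding the observation that $\mathcal{T}$ splits into trees with positive and with nonpositive frequency, whereas you supply a complete lower-bound argument via the top tiles $\mathrm{Top}(T_i)$, which are pairwise disjoint subsets of $\omega$-measure $\nu(T_i)/2$, together with the combinatorial step showing that the trees of $\mathcal{T}_1$ whose top tile is captured by a fixed covering tree $T'_k$ have pairwise disjoint base intervals inside $I'_k$. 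This is a different mechanism: it replaces the containment dichotomy for strips --- which fails for trees, since two trees can meet without either containing the other, and since disjoint trees may share a base interval --- by the rigidity of dyadic tiles (a tile of a covering tree either misses or swallows a top tile), and it makes explicit precisely the point the paper leaves implicit. Your treatment of \eqref{eq:additivity_nu_disjoint_strips} coincides with the paper's.
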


Finally, we introduce the auxiliary functions. First, we define the function $\mathcal{Q}$ by
\begin{gather*}
\mathcal{Q} \colon \mathcal{P}(X) \to \mathcal{P}(\mathcal{D}), \\
\mathcal{Q}(A) = \{ E \colon E \in \mathcal{D}, E_{+} \cap A \neq \varnothing  \},
\end{gather*}
where $E_{+}$ is the upper half part of the strip $E$,
\begin{equation*}
E_{+} = \{ (x,s,\xi) \in E \colon s > \sigma(E)/2 \}.
\end{equation*}
It satisfies the following properties
\begin{gather} \label{eq:minimal_covering_covering}
A \subseteq \mathcal{L}(\mathcal{Q}(A)), 
\\
\label{eq:minimal_covering_monotone}
A_1 \subseteq A_2 \Rightarrow \mathcal{L}(\mathcal{Q}(A_1)) \subseteq \mathcal{L}(\mathcal{Q}(A_2)),
\\
\label{eq:minimal_covering_measure}
\mu(\mathcal{L}(\mathcal{Q}(A))) = \mu(A).
\end{gather}

After that, we define the function $\mathcal{N}$ by
\begin{gather*}
\mathcal{N} \colon \mathcal{P}(\mathcal{D}) \to \mathcal{P}(\mathcal{D}), \\
\mathcal{N}(\mathcal{D}_1) = \{ E \colon E \in \mathcal{D}, \abs{ \pi(E) \cap \pi (\mathcal{L}(\mathcal{D}_1)) } \geq \abs{\pi(E)}/2  \}.
\end{gather*}
It associates a collection of strips $\mathcal{D}_1$ to the collection of strips whose associated space interval is at least half covered by the space intervals associated with the elements of $\mathcal{D}_1$. It satisfies the following properties
\begin{gather} \label{eq:doubling_covering}
\mathcal{L}(\mathcal{D}_1) \subseteq \mathcal{L}(\mathcal{N}(\mathcal{D}_1)), \\
\label{eq:doubling_monotone}
\mathcal{L}(\mathcal{D}_1) \subseteq \mathcal{L}(\mathcal{D}_2) \Rightarrow \mathcal{L}(\mathcal{N}(\mathcal{D}_1)) \subseteq \mathcal{L}(\mathcal{N}(\mathcal{D}_2)), \\
\label{eq:doubling_measure}
\mu(\mathcal{L}(\mathcal{N}(\mathcal{D}_1))) \leq 2 \mu(\mathcal{L}(\mathcal{D}_1)).
\end{gather}

Finally, we define the function $\mathcal{M}$ by
\begin{gather*}
\mathcal{M} \colon \mathcal{P}(\mathcal{D}) \to \mathcal{P}(\mathcal{D}), \\
\mathcal{M}(\mathcal{D}_1) = \{ E \colon E \in \mathcal{D}_1, \forall D_1 \in \mathcal{D}_1 \setminus \{ E \} \textrm{ we have } E \not\subseteq D_1  \}.
\end{gather*}
It associates a collection of strips $\mathcal{D}_1$ to the subcollection of maximal elements with respect to inclusion. In particular, it is well-defined because, for every $L \in \Z$, the space $Y_L$ are bounded in the second variable. In fact, by Lemma \ref{thm:intersection_strip_strip}, the function $\mathcal{M}$ maps into the subset of collections of pairwise disjoint strips in $\mathcal{D}$. Moreover, it satisfies the following properties
\begin{gather} \label{eq:maximal_covering}
\mathcal{L}(\mathcal{D}_1) = \mathcal{L}(\mathcal{M}(\mathcal{D}_1)), \\
\label{eq:maximal_monotone}
\mathcal{L}(\mathcal{D}_1) \subseteq \mathcal{L}(\mathcal{D}_2) \Rightarrow \mathcal{L}(\mathcal{M}(\mathcal{D}_1)) \subseteq \mathcal{L}(\mathcal{M}(\mathcal{D}_2)), \\
\label{eq:maximal_measure}
\mu(\mathcal{L}(\mathcal{D}_1)) = \mu(\mathcal{L}(\mathcal{M}(\mathcal{D}_1))) = \sum_{E \in \mathcal{M}(\mathcal{D}_1)} \mu(E).
\end{gather}

We define the function $\mathcal{C} \colon \mathcal{P}(X) \to \mathcal{\dot{P}}(\mathcal{E})$ by
\begin{equation*}
\mathcal{E} = \mathcal{D}, \qquad \qquad \mathcal{C}(A) = \mathcal{M}(\mathcal{N}(\mathcal{Q}(A))),
\end{equation*}
where $\mathcal{\dot{P}}(\mathcal{E})$ stands for the set of subcollections of pairwise disjoint elements in $\mathcal{E}$.

We prove now that the function $\mathcal{C}$ is a $\mu$-covering function and that the setting $(X,\mu,\nu,\mathcal{C})$ satisfies the canopy condition \ref{def:parent_collection_nesting} and the crop condition \ref{def:anti_stacking_compatibility}.

\begin{lemma} \label{thm:covering_satisfied}
The function $\mathcal{C}$ is a $\mu$-covering function for every choice of the parameter $\Phi \geq 2$. 
\end{lemma}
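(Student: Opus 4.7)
The plan is to verify the three defining properties of a $\mu$-covering function for $\mathcal{C}(A) = \mathcal{M}(\mathcal{N}(\mathcal{Q}(A)))$, namely that $\mathcal{C}(A)$ is a subcollection of pairwise disjoint elements of $\mathcal{E} = \mathcal{D}$, and that $\mathbf{B}_{\mathcal{C}}$ satisfies the covering, monotonicity, and $\mu$-parent bounds with constant $\Phi \geq 2$. Each of these is essentially a chaining of the corresponding properties established for $\mathcal{Q}$, $\mathcal{N}$, and $\mathcal{M}$.

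First, pairwise disjointness is immediate: since $\mathcal{M}$ is already defined to map into the set of collections of pairwise disjoint strips in $\mathcal{D}$ (as noted after the definition of $\mathcal{M}$), the image $\mathcal{C}(A) = \mathcal{M}(\mathcal{N}(\mathcal{Q}(A)))$ lies in $\dot{\mathcal{P}}(\mathcal{E})$. Next, I would chain the three covering properties to obtain
\begin{equation*}
A \subseteq \mathcal{L}(\mathcal{Q}(A)) \subseteq \mathcal{L}(\mathcal{N}(\mathcal{Q}(A))) = \mathcal{L}(\mathcal{M}(\mathcal{N}(\mathcal{Q}(A)))) = \mathbf{B}_{\mathcal{C}}(A),
\end{equation*}
using \eqref{eq:minimal_covering_covering}, \eqref{eq:doubling_covering}, and \eqref{eq:maximal_covering} in turn.

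For monotonicity, given $A_1 \subseteq A_2$, property \eqref{eq:minimal_covering_monotone} yields $\mathcal{L}(\mathcal{Q}(A_1)) \subseteq \mathcal{L}(\mathcal{Q}(A_2))$, and then \eqref{eq:doubling_monotone} and \eqref{eq:maximal_monotone} propagate this inclusion through the compositions, giving $\mathbf{B}_{\mathcal{C}}(A_1) \subseteq \mathbf{B}_{\mathcal{C}}(A_2)$.

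Finally, to bound $\mu(\mathbf{B}_{\mathcal{C}}(A))$, I would chain the measure identities and the single genuine inequality, \eqref{eq:doubling_measure}:
\begin{equation*}
\mu\bigl(\mathbf{B}_{\mathcal{C}}(A)\bigr) = \mu\bigl(\mathcal{L}(\mathcal{M}(\mathcal{N}(\mathcal{Q}(A))))\bigr) = \mu\bigl(\mathcal{L}(\mathcal{N}(\mathcal{Q}(A)))\bigr) \leq 2\,\mu\bigl(\mathcal{L}(\mathcal{Q}(A))\bigr) = 2\,\mu(A),
\end{equation*}
where the first equality uses \eqref{eq:maximal_measure}, the inequality uses \eqref{eq:doubling_measure}, and the last uses \eqref{eq:minimal_covering_measure}. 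This shows that the parameter $\Phi = 2$ (hence any $\Phi \geq 2$) is admissible. There is no substantial obstacle: all the work has been externalised into the statements of \eqref{eq:minimal_covering_covering}--\eqref{eq:maximal_measure}, whose proofs are deferred to Appendix \ref{sec:dyadic_geometry}; the only thing that needs a slight comment is that the factor $2$ in \eqref{eq:doubling_measure} is what fixes the optimal value of $\Phi$.
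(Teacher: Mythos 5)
Your proposal is correct and follows essentially the same route as the paper's proof: both chain the covering properties \eqref{eq:minimal_covering_covering}, \eqref{eq:doubling_covering}, \eqref{eq:maximal_covering}, the monotonicity properties \eqref{eq:minimal_covering_monotone}, \eqref{eq:doubling_monotone}, \eqref{eq:maximal_monotone}, and the measure properties \eqref{eq:maximal_measure}, \eqref{eq:doubling_measure}, \eqref{eq:minimal_covering_measure} to verify the three requirements with $\Phi = 2$. Your additional remark that disjointness of $\mathcal{C}(A)$ comes for free from the definition of $\mathcal{M}$ matches the observation the paper makes just before the lemma.
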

\begin{proof}
We recall that
\begin{equation*}
\mathbf{B}_{\mathcal{C}}(A) = \mathcal{L}(\mathcal{M}(\mathcal{N}(\mathcal{Q}(A)))).
\end{equation*}
By \eqref{eq:minimal_covering_covering}, \eqref{eq:doubling_covering} and \eqref{eq:maximal_covering}, we have
\begin{equation*}
A \subseteq \mathbf{B}_{\mathcal{C}}(A).
\end{equation*}
By \eqref{eq:minimal_covering_monotone}, \eqref{eq:doubling_monotone} and \eqref{eq:maximal_monotone}, we have
\begin{equation*}
A_1 \subseteq A_2 \Rightarrow \mathbf{B}_{\mathcal{C}}(A_1) \subseteq \mathbf{B}_{\mathcal{C}}(A_2).
\end{equation*}
Moreover, by \eqref{eq:maximal_measure}, \eqref{eq:doubling_measure} and \eqref{eq:minimal_covering_measure}, we have
\begin{equation*}
\mu\big(\mathbf{B}_{\mathcal{C}}(A)\big) \leq 2 \mu(A).
\end{equation*}
\end{proof}

\begin{lemma} \label{thm:nesting_satisfied}
	The setting $(X,\mu,\nu,\mathcal{C})$ satisfies
	the canopy condition \ref{def:parent_collection_nesting} for every choice of parameters $\Phi, K \geq 2$.
\end{lemma}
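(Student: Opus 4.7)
Unpacking the condition to be proved, one must show that, for every $U \subseteq X$,
\[\sum_{A \in \mathcal{A}} \nu(U \cap A) + \nu(U \cap D) \leq K\, \nu\big( U \cap (D \cup \mathcal{L}(\mathcal{A})) \big),\]
given $\mathcal{A}$ a $\nu$-Carath\'eodory collection with parameter $K \geq 2$ and $D$ disjoint from $B := \mathbf{B}_{\mathcal{C}}(\mathcal{L}(\mathcal{A}))$. By the construction of $\mathcal{C}$, the set $B$ is a disjoint union $\bigsqcup_i D_i$ of the $\mathcal{M}$-maximal strips in $\mathcal{N}(\mathcal{Q}(\mathcal{L}(\mathcal{A})))$, where each $D_i$ has dyadic base $I_i$ satisfying the half-covering bound $|I_i \cap P| \geq |I_i|/2$ with $P := \pi(\mathcal{L}(\mathcal{Q}(\mathcal{L}(\mathcal{A}))))$. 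Pairwise disjointness of $\mathcal{A} \cup \{D\}$ follows from $\mathcal{L}(\mathcal{A}) \subseteq B$ and $D \cap B = \varnothing$.

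Fix $U$ and take an $\varepsilon$-optimal tree cover $\{T_j\}$ of $U \cap (D \cup \mathcal{L}(\mathcal{A}))$, so that $\sum_j \tau(T_j) \leq \nu(U \cap (D \cup \mathcal{L}(\mathcal{A}))) + \varepsilon$. Using the dyadic dichotomy of Lemma \ref{thm:intersection_strip_strip} between the base $I_j$ of each $T_j$ and the $I_i$'s, classify each $T_j$ as \emph{Type 1} if $T_j \subseteq D_{i_0}$ for some $i_0$, \emph{Type 2a} if $I_j$ is disjoint from $I_B := \bigcup_i I_i$, or \emph{Type 2b} if $I_j$ strictly contains one or more $I_i$'s. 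Since by Lemma \ref{thm:intersection_strip_tree} each $T_j \cap D_i$ is again a tree, the family $\{T_j\}_{\text{Type 1}} \cup \{T_j \cap D_i\}_{\text{Type 2b}}$ is a tree cover of $U \cap \mathcal{L}(\mathcal{A}) \subseteq B$, while $\{T_j\}_{\text{Type 2}}$ is a tree cover of $U \cap D \subseteq B^c$. Apply the $\nu$-Carath\'eodory hypothesis on $\mathcal{A}$ to the first cover to bound $\sum_A \nu(U \cap A)$ by $K$ times its total length.

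The crucial geometric input is the following. For any Type 2b tree, the strip $D(I_j)$ itself cannot belong to $\mathcal{N}(\mathcal{Q}(\mathcal{L}(\mathcal{A})))$, because otherwise the $I_i$'s strictly inside $I_j$ would fail to be $\mathcal{M}$-maximal; thus $|I_j \cap P| < |I_j|/2$. On the other hand, summing the half-covering bounds $|I_i \cap P| \geq |I_i|/2$ over the $I_i$'s properly contained in $I_j$ yields $|I_j \cap I_B| \leq 2 |I_j \cap P|$. Chaining these produces the strict inequality $|I_j \cap I_B| < |I_j|$, so that the cost $\sum_{i : I_i \subsetneq I_j} |I_i|$ of covering $T_j \cap \mathcal{L}(\mathcal{A})$ by $\{T_j \cap D_i\}$ is strictly smaller than $\tau(T_j)$; this deficit is precisely what is absorbed by the $K \geq 2$ slack in the Carath\'eodory hypothesis. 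Summing these contributions over $j$, balancing the $K$-multiplied bound on the $\mathcal{L}(\mathcal{A})$-cover against the unit bound on the $D$-cover, and sending $\varepsilon \to 0$ closes the estimate.

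The main obstacle is the bookkeeping. A Type 2b tree contributes only $\tau(T_j)$ to the original cover, but its $B^c$-part (needed for $T_j \cap D$, and containing the top tile of $T_j$, whose coverage alone costs $\tau(T_j)$) and its $B$-part (needed for $T_j \cap \mathcal{L}(\mathcal{A})$) a priori demand a total of up to $2\tau(T_j)$ when covered separately. The half-covering property from the $\mathcal{N}$-condition is what prevents this naive doubling from blowing up the constant, and the threshold $K \geq 2$ is exactly the amount of slack one needs to accommodate the residual factor coming from the decomposition.
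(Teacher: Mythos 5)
Your overall strategy---pass to a near-optimal tree cover, classify each tree against the maximal strips composing $B=\mathbf{B}_{\mathcal{C}}(\mathcal{L}(\mathcal{A}))$, and play the half-covering property of $\mathcal{N}$ against the slack $K\geq 2$---is the same as the paper's, but the quantitative step that is supposed to close the argument does not close. For a Type 2b tree $T_j$ the inequality you need is
\begin{equation*}
K\cdot\big(\text{cost of covering } T_j\cap\mathcal{L}(\mathcal{A})\big)+\tau(T_j)\ \leq\ K\,\tau(T_j),
\end{equation*}
i.e.\ that cost must be at most $\frac{K-1}{K}\tau(T_j)=\frac12\tau(T_j)$ for $K=2$. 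Your chain only yields $\abs{I_j\cap I_B}\leq 2\abs{I_j\cap P}<\abs{I_j}$, and ``strictly smaller than $\tau(T_j)$'' is not enough: $K\cdot(\text{something just below }\tau(T_j))+\tau(T_j)$ can approach $(K+1)\tau(T_j)>K\tau(T_j)$. The loss is real and comes from covering $T_j\cap\mathcal{L}(\mathcal{A})$ by the $\mathcal{N}$-enlarged strips $D_i$: their bases can fill almost all of $I_j$ even while $\abs{I_j\cap P}<\abs{I_j}/2$. For instance, take $I_j=(0,1]$ and $I_1=(0,\tfrac12]$, $I_2=(\tfrac12,\tfrac34]$, \dots, $I_n=(1-2^{1-n},1-2^{-n}]$, with $P\cap I_i$ equal to the left dyadic child of $I_i$. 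Then each $D(I_i)$ passes the $\mathcal{N}$-test with equality and is maximal, $\abs{I_j\cap P}=\tfrac12(1-2^{-n})<\tfrac12$ so $T_j$ is indeed Type 2b, yet $\abs{I_j\cap I_B}=1-2^{-n}$ is arbitrarily close to $\abs{I_j}$. So the bookkeeping you describe cannot be balanced at $K=2$.

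The repair is exactly what the paper does in Step 1 of its proof: estimate $\nu(T_j\cap\mathcal{L}(\mathcal{A}))$ through the cover by $\mathcal{M}(\mathcal{Q}(\mathcal{L}(\mathcal{A})))$ rather than by $\mathcal{M}(\mathcal{N}(\mathcal{Q}(\mathcal{L}(\mathcal{A}))))$, using the additivity of $\nu$ on intersections of a tree with disjoint strips (Lemma \ref{thm:measure_disjoint_union}); the cost of that cover inside $T_j$ is $\abs{I_j\cap P}<\abs{I_j}/2$, which is the factor of $2$ you are missing, and only afterwards is the Carath\'eodory hypothesis applied. With that substitution your per-tree estimate reads $K\cdot\tfrac12\tau(T_j)+\tau(T_j)\leq K\tau(T_j)$ for $K\geq2$, and the remaining structure of your argument (treating arbitrary $D$ by covering it with $\mathcal{M}(\mathcal{Q}(D))$, and arbitrary $U$ by an optimal tree cover) matches the paper's Steps 2--4.
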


\begin{proof}
Let $\mathcal{A}$ be a $\nu$-Carath\'{e}odory collection of subsets of $X$ with parameter $K$, and $\widetilde{D}$ a subset of $X$ disjoint from $\mathbf{B}_{\mathcal{C}}\big( \mathcal{L}(\mathcal{A})\big)$. We claim that the collection $\mathcal{A} \cup \{ \widetilde{D} \}$ is still $\nu$-Carath\'{e}odory with the same parameter $K$. In particular, we want to prove that for every subset $U$ of $X$, we have
\begin{equation} \label{eq:general_set_general_set}
\sum_{A \in \mathcal{A}} \nu(U \cap A) + \nu(U \cap \widetilde{D}) \leq K \nu(U).
\end{equation}
Without loss of generality, we assume $U \cap \widetilde{D} \neq \varnothing$, otherwise the inequality follows by the $\nu$-Carath\'{e}odory property for the collection $\mathcal{A}$. In particular, we have $\widetilde{D} \neq \varnothing$.

First, we prove \eqref{eq:general_set_general_set} under some additional assumptions on $\widetilde{D}$ and $U$. After that, we obtain the general case in a series of generalization steps.
 
{\textbf{Step 1.}} Let $\widetilde{D}$ be a nonempty set of the form 
\begin{equation}
D \setminus \mathbf{B}_{\mathcal{C}}\big(\mathcal{L}(\mathcal{A})\big),
\end{equation}
where $D$ is a strip in $\mathcal{D}$, and $ \mathbf{B}_{\mathcal{C}}\big(\mathcal{L}(\mathcal{A})\big) \subsetneq D$. We claim that, for every tree $T$ in $\mathcal{T}$, we have
\begin{equation} \label{eq:single_tree_single_strip}
\sum_{A \in \mathcal{A}} \nu(T \cap A) + \nu(T \cap D) \leq K \nu(T).
\end{equation}
The version of \eqref{eq:general_set_general_set} for the particular choices of $T$ and $\widetilde{D}$ follows by the monotonicity of $\nu$.

Without loss of generality, we assume $T$ to be contained in $D$. The result for an arbitrary tree $T$ follows by that for $T \cap D$, which by Lemma \ref{thm:intersection_strip_tree} is a tree as well, and the monotonicity of $\nu$. 

For every tree $T$ contained in $D$ with nonempty intersection with $\widetilde{D}$, we have 
\begin{equation*} 
D(T) \notin \mathcal{N}(\mathcal{Q}(\mathcal{L}(\mathcal{A}))).
\end{equation*}
Together with \eqref{eq:nu_as_mu}, this yields
\begin{equation*}
\nu(T) = \abs{\pi\big(D(T)\big)} \geq 2 \abs{\pi(D(T) \cap \mathcal{L}(\mathcal{Q}(\mathcal{L}(\mathcal{A}))))}.
\end{equation*}
By \eqref{eq:maximal_covering} and the disjointness of the elements of a collection $\mathcal{M}(\mathcal{D}_1)$ for every $\mathcal{D}_1 \subseteq \mathcal{D}$, we have
\begin{align*}
\abs{\pi(D(T) \cap \mathcal{L}(\mathcal{Q}(\mathcal{L}(\mathcal{A}))))} & = \abs{\pi(D(T) \cap \mathcal{L}(\mathcal{M}(\mathcal{Q}(\mathcal{L}(\mathcal{A})))))} \\
& = \sum_{E \in \mathcal{M}(\mathcal{Q}(\mathcal{L}(\mathcal{A})))} \abs{\pi(D(T) \cap E)}.
\end{align*}
By the monotonicity of the Lebesgue measure, Lemma \ref{thm:intersection_strip_tree}, and \eqref{eq:nu_as_mu}, we have
\begin{align*}
\sum_{E \in \mathcal{M}(\mathcal{Q}(\mathcal{L}(\mathcal{A})))} \abs{\pi(D(T) \cap E)} & \geq \sum_{E \in \mathcal{M}(\mathcal{Q}(\mathcal{L}(\mathcal{A})))} \abs{\pi(T \cap E)} \\
& \geq \sum_{E \in \mathcal{M}(\mathcal{Q}(\mathcal{L}(\mathcal{A})))} \nu( T \cap E).
\end{align*}
By \eqref{eq:additivity_nu_disjoint_strips} and the monotonicity of $\nu$, we have
\begin{equation*}
\sum_{E \in \mathcal{M}(\mathcal{Q}(\mathcal{L}(\mathcal{A})))} \nu( T \cap E) \geq \nu( T \cap \mathcal{L}(\mathcal{M}(\mathcal{Q}(\mathcal{L}(\mathcal{A}))))) \geq \nu(T \cap \mathcal{L}(\mathcal{A})).
\end{equation*}
Together with the condition $K \geq 2$ and the $\nu$-Carath\'{e}odory property for the collection $\mathcal{A}$, the previous chains of inequalities yield
\begin{align*}
K \nu(T) & \geq \nu(T \cap D) + 2(K-1) \nu(T \cap \mathcal{L}(\mathcal{A})) \\
& \geq \nu(T \cap D) + K \nu(T \cap \mathcal{L}(\mathcal{A})) \\
& \geq \nu(T \cap D) + \sum_{A \in \mathcal{A}} \nu(T \cap A ).
\end{align*}

{\textbf{Step 2.}} Let $\widetilde{D}$ be a nonempty set of the form 
\begin{equation*}
\widetilde{D} = \bigcup_{D' \in \mathcal{D}'} \widetilde{D}' = \bigcup_{D' \in \mathcal{D}'} \big( D' \setminus \mathbf{B}_{\mathcal{C}} ( \mathcal{L}(\mathcal{A})) \big),
\end{equation*}
where $\mathcal{D}'$ is a collection of pairwise disjoint strips. We claim that, for every tree $T$ in $\mathcal{T}$, we have \eqref{eq:general_set_general_set} for the particular choices of $T$ and $\widetilde{D}$. 

By definition, for every strip $\mathcal{D}'$, we have
\begin{equation*}
D' \not\subseteq \mathbf{B}_{\mathcal{C}}(\mathcal{L}(\mathcal{A})).
\end{equation*}
Therefore, by Lemma \ref{thm:intersection_strip_strip}, we have
\begin{equation*}
\mathcal{C}(\mathcal{L}(\mathcal{A})) = \mathcal{C}_1 \cup \bigcup_{D' \in \mathcal{D}'} \mathcal{C}_{D'},
\end{equation*}
where the elements of $\mathcal{C}_1$ are disjoint from $\mathcal{L}(\mathcal{D}')$, while, for every $D'$ in $\mathcal{D}'$, the elements of $\mathcal{C}_{D'}$ are contained in $D'$. In particular, we have
\begin{align*}
\mathcal{A} & = \mathcal{A}_1 \cup \bigcup_{D' \in \mathcal{D}'} \mathcal{A}_{D'} \\
& = \{ A \colon A \in \mathcal{A}, A \subseteq \mathcal{L}(\mathcal{C}_1) \} \cup \bigcup_{D' \in \mathcal{D}'} \{ A \colon A \in \mathcal{A}, A \subseteq \mathcal{L}(\mathcal{C}_{D'}) \}.
\end{align*}
Then
\begin{equation} \label{eq:single_tree_multiple_strip}
\begin{split}
K \nu(T) & \geq K \nu \big(T \cap (\mathcal{C}(\mathcal{L}(\mathcal{A})) \cup \bigcup_{D' \in \mathcal{D}'} D' ) \big) \\
& \geq K \nu \big(T \cap \mathcal{L}(\mathcal{C}_1) \big)  + K \sum_{D' \in \mathcal{D}'} \nu(T \cap D') \\
& \geq \sum_{A \in \mathcal{A}_1} \nu(T \cap A ) + \sum_{D' \in \mathcal{D}'} \big( \sum_{A \in \mathcal{A}_{D'}} \nu(T \cap A ) + \nu(T \cap D') \big) \\
& \geq \sum_{A \in \mathcal{A} } \nu(T \cap A ) + \nu(T \cap \mathcal{L}( \mathcal{D}')) \\
& \geq \sum_{A \in \mathcal{A} } \nu(T \cap A ) + \nu(T \cap \widetilde{D}).
\end{split}
\end{equation}
where we used the monotonicity of $\nu$ in the first and in the fifth inequality, \eqref{eq:additivity_nu_disjoint_strips} in the second, the $\nu$-Carath\'{e}odory property for the collection $\{ A \colon A \in \mathcal{A}, A \subseteq \mathcal{L}(\mathcal{C}_1)  \}$ and \eqref{eq:single_tree_single_strip} for each $D'$ in $\mathcal{D}'$ in the third, Fubini and \eqref{eq:additivity_nu_disjoint_strips} in the fourth.

{\textbf{Step 3.}} Let $\widetilde{D}$ be an arbitrary nonempty set disjoint from $\mathbf{B}_{\mathcal{C}}\big(\mathcal{L}(\mathcal{A})\big)$. We claim that, for every tree $T$ in $\mathcal{T}$, we have \eqref{eq:general_set_general_set} for the particular choices of $T$ and $\widetilde{D}$. 

For $\mathcal{D}' = \mathcal{M}(\mathcal{Q}(\widetilde{D}))$, we define
\begin{equation*}
\widetilde{D}_1 = \bigcup_{D' \in \mathcal{D}'} \big( D' \setminus \mathbf{B}_{\mathcal{C}} (\mathcal{L}(\mathcal{A}))\big).
\end{equation*}
By \eqref{eq:single_tree_multiple_strip} and the monotonicity of $\nu$, we have
\begin{equation} \label{eq:single_tree_general_set}
K \nu(T) \geq \sum_{A \in \mathcal{A}} \nu(T \cap A) + \nu(T \cap \widetilde{D}_1) \geq \sum_{A \in \mathcal{A}} \nu(T \cap A) + \nu(T \cap \widetilde{D}).
\end{equation} 

{\textbf{Step 4.}} Let $\widetilde{D}$ be an arbitrary nonempty set disjoint from $\mathbf{B}_{\mathcal{C}}\big(\mathcal{L}(\mathcal{A})\big)$. We claim that, for every subset $U$ of $X$, we have \eqref{eq:general_set_general_set}. 

In fact, there exists a collection $\mathcal{T}' \subseteq \mathcal{T}$ covering $U$ $\nu$-optimally, namely
\begin{gather} \label{eq:epsilon_covering_set}
U \subseteq \bigcup_{T \in \mathcal{T}'} T, \\ \label{eq:epsilon_covering_measure}
\sum_{T \in \mathcal{T}'} \tau(T) = \nu(U).
\end{gather}
By \eqref{eq:single_tree_general_set} for every tree $T$ in $\mathcal{T}'$, the subadditivity of $\nu$, and \eqref{eq:epsilon_covering_set}, we have
\begin{align*}
K \sum_{T \in \mathcal{T}'} \nu(T) & \geq \sum_{T \in \mathcal{T}'} \big( \sum_{A \in \mathcal{A}} \nu(T \cap A) + \nu( T \cap \widetilde{D} ) \big) \\
& \geq \sum_{A \in \mathcal{A}} \sum_{T \in \mathcal{T}'} \nu(T \cap A) + \sum_{T \in \mathcal{T}'} \nu( T \cap \widetilde{D} ) \\
& \geq \sum_{A \in \mathcal{A}} \nu(U \cap A) + \nu(U \cap \widetilde{D} ).
\end{align*}
Together with \eqref{eq:epsilon_covering_measure}, this yields the desired inequality in \eqref{eq:general_set_general_set}.
\end{proof}

\begin{lemma} \label{thm:anti_stacking_compatibility_satisfied}
	The setting $(X,\mu,\nu,\mathcal{C})$ satisfies
	the crop condition \ref{def:anti_stacking_compatibility} for every choice of parameters $\Phi \geq 2, K \geq 1$.
\end{lemma}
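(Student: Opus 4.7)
The plan is to take $\mathcal{D} = \mathcal{M}(\mathcal{A})$, the subcollection of strips in $\mathcal{A}$ that are maximal with respect to inclusion. Lemma \ref{thm:intersection_strip_strip} makes this well-defined and produces pairwise disjoint strips (in $Y_L$ the chain of strips over any fixed strip is finite, so maximal elements always exist). The $\mu$-covering function property is already established by Lemma \ref{thm:covering_satisfied} for any $\Phi \geq 2$, so only the two remaining requirements need verification.

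To check that $\mathcal{M}(\mathcal{A})$ is $\nu$-Carathéodory with parameter $K = 1$, I would fix a subset $U$ of $X$, choose $\varepsilon > 0$, and cover $U \cap \mathcal{L}(\mathcal{M}(\mathcal{A}))$ by a collection $\mathcal{T}' \subseteq \mathcal{T}$ of trees with $\sum_{T \in \mathcal{T}'} \tau(T) \leq (1+\varepsilon)\nu(U \cap \mathcal{L}(\mathcal{M}(\mathcal{A})))$. For every $D \in \mathcal{M}(\mathcal{A})$, $U \cap D \subseteq \bigcup_{T \in \mathcal{T}'} T$, so subadditivity and monotonicity of $\nu$ yield $\sum_{D} \nu(U \cap D) \leq \sum_T \sum_D \nu(T \cap D)$; the inner sum equals $\nu(T \cap \mathcal{L}(\mathcal{M}(\mathcal{A}))) \leq \tau(T)$ by \eqref{eq:additivity_nu_disjoint_strips} and \eqref{eq:measure_tree}. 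Summing and letting $\varepsilon \to 0$ gives $\sum_D \nu(U \cap D) \leq \nu(U \cap \mathcal{L}(\mathcal{M}(\mathcal{A})))$.

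For the redundancy condition, I want to show that whenever $F$ is disjoint from $\mathcal{L}(\mathcal{M}(\mathcal{A}))$ one has $\mathcal{C}(F) \cap \mathcal{A} = \varnothing$; then trivially $\widetilde{\mathcal{C}}(F) = \mathcal{C}(F)$. Suppose toward contradiction that $E \in \mathcal{M}(\mathcal{N}(\mathcal{Q}(F))) \cap \mathcal{A}$. The sub-poset $\{A \in \mathcal{A} : E \subseteq A\}$ is totally ordered by Lemma \ref{thm:intersection_strip_strip} and (working in $Y_L$) bounded in scale, so it admits a maximal element $E'$, which in turn is maximal in all of $\mathcal{A}$, i.e.\ $E' \in \mathcal{M}(\mathcal{A})$. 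Since $E \in \mathcal{N}(\mathcal{Q}(F))$, the definition yields some $E_0 \in \mathcal{Q}(F)$ with $\pi(E_0) \cap \pi(E) \neq \varnothing$; by Lemma \ref{thm:intersection_strip_strip} the strips are nested, so either $E_0 \subseteq E$ or $E \subsetneq E_0$. In the first case, $(E_0)_+ \cap F \neq \varnothing$ forces $E \cap F \neq \varnothing$, whence $E' \cap F \neq \varnothing$, contradicting $F \cap \mathcal{L}(\mathcal{M}(\mathcal{A})) = \varnothing$. In the second case, since $\mathcal{Q}(F) \subseteq \mathcal{N}(\mathcal{Q}(F))$ (an element of $\mathcal{Q}(F)$ trivially satisfies the half-covering condition for itself), $E_0 \in \mathcal{N}(\mathcal{Q}(F))$ strictly contains $E$, contradicting $E \in \mathcal{M}(\mathcal{N}(\mathcal{Q}(F)))$.

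The main obstacle is the dichotomy in the last step: $E$ need not itself lie in $\mathcal{Q}(F)$, and the ``$\pi$-doubling'' operation $\mathcal{N}$ could in principle introduce strips not meeting $F$ at all. What saves us is that whatever witness $E_0 \in \mathcal{Q}(F)$ contributes its $\pi$-footprint to $E$ must by strip geometry be comparable to $E$, and in either comparison direction one of the two chosen maximality properties (maximality of $E'$ inside $\mathcal{A}$, or maximality of $E$ inside $\mathcal{N}(\mathcal{Q}(F))$) is violated. I do not expect any issue with parameters beyond verifying $K = 1$ and $\Phi \geq 2$ suffice.
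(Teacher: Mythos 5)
Your proposal is correct and follows essentially the same route as the paper: take $\mathcal{M}(\mathcal{A})$ as the $\nu$-Carath\'{e}odory subcollection (with $K=1$, via near-optimal tree coverings and \eqref{eq:additivity_nu_disjoint_strips}), and show $\mathcal{C}(F)\cap\mathcal{A}=\varnothing$ for $F$ disjoint from $\mathcal{L}(\mathcal{M}(\mathcal{A}))=\mathcal{L}(\mathcal{A})$, so that $\widetilde{\mathcal{C}}(F)=\mathcal{C}(F)$. Your case analysis (either the witness $E_0\in\mathcal{Q}(F)$ sits inside $E$, forcing $E\cap F\neq\varnothing$, or $E\subsetneq E_0$, contradicting maximality of $E$ in $\mathcal{N}(\mathcal{Q}(F))$) is exactly the content behind the paper's terse assertion, just spelled out in more detail.
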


\begin{proof}
	For every collection $\mathcal{A}$ of strips in $\mathcal{D}$, let $\mathcal{B} = \mathcal{M}(\mathcal{A})$. The subcollection $\mathcal{B}$ is $\nu$-Carath\'{e}odory with parameter $K = 1$. Moreover, for every subset $F$ of $X$ disjoint from $\mathcal{L}(\mathcal{B}) = \mathcal{L}(\mathcal{A})$, we have
\begin{equation*}
\mathcal{C}(F) \cap \mathcal{A} = \mathcal{Q}(F) \cap \mathcal{A} = \varnothing,
\end{equation*}
and this yields
\begin{equation*}
\mathbf{B}_{\mathcal{C}} (F) = \mathbf{B}_{\widetilde{\mathcal{C}}} (F).
\end{equation*}
\end{proof}

\section{Double iterated outer $L^p$ spaces on the upper half $3$-space} \label{sec:dyadic_upper_space}

In this section we prove Theorem \ref{thm:collapsing_Holder_triangular_upper_3_space} in the dyadic upper half $3$-space setting described in \eqref{eq:upper_3_space_dyadic_setting_weight}, \eqref{eq:upper_3_space_dyadic_setting_strip} and \eqref{eq:upper_3_space_dyadic_setting_tree}, reducing the problem to an equivalent one in a finite setting via an approximation argument.

We start stating some auxiliary results about the approximation of functions in outer $L^p$ spaces. We use them to prove the approximation of functions in outer $L^p$ spaces on the upper half $3$-space $X$ by functions with support in $X_J$ for a certain $J \in \N$, where
\begin{equation} \label{eq:monotone_subset}
X_J = (-2^J J, 2^J J] \times (2^{-J}, 2^J] \times (-2^J J, 2^J J].
\end{equation}
On $X_J$, we have the measure $\omega_J$ and the outer measures $\mu_J, \nu_J$ induced by $\omega, \mu, \nu$. In particular, this setting inherits the definition of the function $\mathcal{C}$ on $Y_J$, for $Y_J$ defined in \eqref{eq:Y_l}, and its properties (Lemma \ref{thm:covering_satisfied}, Lemma \ref{thm:nesting_satisfied}, Lemma \ref{thm:anti_stacking_compatibility_satisfied}). 

Next, for any $J \in \N$, we introduce a finite setting $X'_J$ and exhibit a map between functions on $X_J$ and on $X'_J$ preserving the double iterated outer $L^p$ quasi-norms. We use Theorem \ref{thm:collapsing_2_step_iteration_finite}, Theorem \ref{thm:Holder_triangular_2_step_iteration_finite} in the finite settings to prove Theorem \ref{thm:collapsing_Holder_triangular_upper_3_space}. 

Finally, we conclude the section with some observations about the result analogous to Theorem \ref{thm:collapsing_Holder_triangular_upper_3_space} for double iterated outer $L^p$ spaces in the upper half $3$-space setting where the outer measures are defined by arbitrary strips and trees originally considered in \cite{2016arXiv161007657U}.

\subsection{Approximation results}

First, we state a result about the approximation of functions in $L^p_\mu(S)$ by functions in $L^p_\mu(S) \cap L^\infty_\mu(S)$, for a size $S$ of the form $\ell^r_\omega$ or $ \ell^q_\nu(\ell^r_\omega)$, and more generally an arbitrary size in the definition in \cite{MR3312633}.

\begin{lemma} \label{thm:approximation_max_size}
	For every $p \in (0, \infty)$, there exists a constant $C=C(p)$ such that the following property holds true.
	
	Let $X$ be a set, $\mu$ an outer measure, and $S$ a size. For every $f \in L^p_\mu(S)$, there exists a subset $A$ of $X$ such that $f 1_A$ is in $L^p_\mu(S) \cap L^\infty_\mu(S)$ and we have
	\begin{equation*}
	\norm{f}_{L^p_\mu(S)} \leq C \norm{f 1_{A}}_{L^p_\mu(S)}.
	\end{equation*}
\end{lemma}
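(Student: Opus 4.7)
The plan is to take $A$ to be the complement of an optimal super level set of $f$ at a sufficiently high level, chosen so that $f 1_A \in L^\infty_\mu(S)$ holds by construction while $f 1_A$ retains most of the $L^p_\mu(S)$ mass of $f$. Since $f \in L^p_\mu(S)$, the discretization in \eqref{eq:discretized_norm} ensures that $\sum_{k \in \Z} 2^{kp} \mu(S(f) > 2^k)$ is finite. For a small $\delta = \delta(p) > 0$ to be fixed later, I pick $N \in \Z$ large enough that
\begin{equation*}
\sum_{k > N} 2^{kp} \mu(S(f) > 2^k) \leq \delta \norm{f}^p_{L^p_\mu(S)};
\end{equation*}
in particular $2^{(N+1)p} \mu(S(f) > 2^{N+1}) \leq \delta \norm{f}^p_{L^p_\mu(S)}$. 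Fixing a small $\varepsilon > 0$, I then let $V = V(2^{N+1},\varepsilon)$ be an optimal set up to $(1+\varepsilon)$ associated with $\mu(S(f) > 2^{N+1})$, namely satisfying $\norm{f 1_{V^c}}_{L^\infty_\mu(S)} \leq 2^{N+1}$ and $\mu(V) \leq (1+\varepsilon)\mu(S(f) > 2^{N+1})$, and set $A = V^c$. The first bound immediately gives $f 1_A \in L^\infty_\mu(S)$.

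The heart of the argument is a pointwise comparison of super level measures at each level $2^k$ with $k \leq N$. For every admissible set $W$ in the definition of $\mu(S(f 1_A) > 2^k)$, the set $V \cup W$ is admissible for $f$ at the same level, since $(V \cup W)^c = A \cap W^c$ forces $f 1_{(V \cup W)^c} = (f 1_A) 1_{W^c}$. Taking the infimum over $W$ yields
\begin{equation*}
\mu(S(f) > 2^k) \leq \mu(S(f 1_A) > 2^k) + (1+\varepsilon)\mu(S(f) > 2^{N+1}), \qquad k \in \Z.
\end{equation*}

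Multiplying by $2^{kp}$, summing over $k \leq N$, and invoking \eqref{eq:discretized_norm} together with the geometric bound $\sum_{k \leq N} 2^{kp} \leq C_p 2^{Np}$ produces an inequality of the form
\begin{equation*}
\norm{f}^p_{L^p_\mu(S)} \leq C_p \norm{f 1_A}^p_{L^p_\mu(S)} + C'_p \delta \norm{f}^p_{L^p_\mu(S)}.
\end{equation*}
Choosing $\delta$ (and then $\varepsilon$) small enough in a $p$-dependent way allows the second term to be absorbed on the left, which yields the desired inequality with a constant $C = C(p)$. The argument is conceptually routine, relying on nothing more than the general properties of a size and its super level measure; the main obstacle is purely the bookkeeping required to balance the implicit constants of the $\sim_p$ equivalence in \eqref{eq:discretized_norm} and of the geometric series bound against $\delta$ and $\varepsilon$ so that the absorption step succeeds.
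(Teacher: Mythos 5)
Your argument is correct, and it builds the set $A$ in the same way as the paper: the complement of a near-optimal super level set of $f$ at a sufficiently high level, with the subadditivity estimate $\mu(S(f)>\lambda)\leq\mu(S(f1_A)>\lambda)+\mu(V)$ doing the work of transferring super level measures from $f$ to $f1_A$. Where you diverge is in how the cutoff level is chosen and how the error is absorbed. The paper locates a ``jump level'' $k_1$ at which $\mu(S(f)>2^{k_1-1})>2^p\mu(S(f)>2^{k_1})$ (such a level must exist, else the series $\sum_k 2^{kp}\mu(S(f)>2^k)$ would diverge) and then proves the level-by-level lower bound $\mu(S(f1_A)>2^k)\geq\tfrac{1-2^{-p}}{2}\mu(S(f)>2^k)$ for every $k<k_1$ by contradiction; you instead choose $N$ so that the tail of the series is at most $\delta\norm{f}^p_{L^p_\mu(S)}$ and absorb the error only after summing. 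Your route avoids the pigeonhole step and the case distinction, at the cost of only obtaining the comparison of the summed quantities rather than of the individual super level measures; for the statement of the lemma this is all that is needed, and the bookkeeping you describe (the geometric series bound $\sum_{k\leq N}2^{kp}\lesssim_p 2^{Np}$ against $2^{(N+1)p}\mu(S(f)>2^{N+1})\leq\delta\norm{f}^p_{L^p_\mu(S)}$, then $\delta=\delta(p)$ small) closes without difficulty. The only points worth making explicit are the degenerate cases: if $\norm{f}_{L^p_\mu(S)}=0$, or more generally if $\mu(S(f)>2^{N+1})=0$ for some $N$, one simply takes $A=X$ (respectively $A=V^c$ with $\mu(V)$ arbitrarily small), exactly as the paper does when $\mu(S(f)>2^{k_0})=0$.
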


Next, we state a result about the behaviour of the super level measures for single iterated outer $L^p$ spaces for monotonically increasing cut offs of a function in a general setting.
\begin{lemma} [Monotonic convergence I] \label{thm:monotone_approximation_size}
	For every $r \in (0, \infty)$, there exist constants $C=C(r)$, $c=c(r)$ such that the following property holds true. 
	
	Let $X$ be a set, $\nu$ an outer measure, and $\omega$ a measure. Let $\{ X_J \colon J \in \N \}$ be a monotonically increasing sequence of subsets of $X$ such that
	\begin{equation*}
	X = \bigcup_{J \in \N} X_J,
	\end{equation*} 
	and let $f \in L^\infty_\nu(\ell^r_\omega)$ be a function on $X$. Then, for every $k \in \Z$, there exists $J=J(r,f,k) \in \Z$ such that 
	\begin{equation*} 
	\nu(\ell^r_\omega(f) > 2^k) \leq C \sum_{l \geq k} \nu(\ell^r_\omega(f 1_{X_J}) > c 2^{k} ).
	\end{equation*}
\end{lemma}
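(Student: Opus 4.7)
The plan is to use the atomic decomposition of $f$ relative to the size $\ell^r_\omega$ (Proposition \ref{thm:atomic_decomposition_interior_level}) in order to reduce the statement to finitely many atom-by-atom estimates, then invoke monotone convergence on each atom separately. Applying the proposition to $f$ produces pairwise disjoint sets $\{U_j : j \in \Z\}$ and the nested family $V_j = \bigcup_{l \geq j} U_l$. The hypothesis $f \in L^\infty_\nu(\ell^r_\omega)$ lets me pick $K \in \Z$ minimal with $\norm{f}_{L^\infty_\nu(\ell^r_\omega)} \leq 2^{K+1}$; the exhaustion argument building the decomposition then forces $U_j = \varnothing$ for every $j \geq K+1$. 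Combining \eqref{eq:covering_interior_level} with the subadditivity of $\nu$ gives
\begin{equation*}
\nu(\ell^r_\omega(f) > 2^k) \leq \nu(V_k) \leq \sum_{j = k}^{K} \nu(U_j),
\end{equation*}
a sum with only finitely many nonzero terms.

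For each such $j$ with $U_j \neq \varnothing$, the selection property \eqref{eq:superlevel_interior_level} yields $\norm{f 1_{U_j \setminus V_{j+1}}}_{L^r(X,\omega)}^r > 2^{jr} \nu(U_j)$. Since $\{X_J\}$ increases to $X$, the monotone convergence theorem applied to $\abs{f}^r 1_{U_j \setminus V_{j+1}}$ produces an index $J_j$ such that for every $J \geq J_j$,
\begin{equation*}
\norm{f 1_{X_J} 1_{U_j \setminus V_{j+1}}}_{L^r(X,\omega)}^r > 2^{(j-1)r} \nu(U_j).
\end{equation*}
By \eqref{eq:maximal_choice_interior_level}, the truncated function $g_j = f 1_{X_J \cap V_{j+1}^c}$ satisfies $\norm{g_j}_{L^\infty_\nu(\ell^r_\omega)} \leq 2^{j+1}$, so Lemma \ref{thm:atomic_super_level_set_interior_level} with, say, $N = 1$, applied to $g_j$ and to the set $A = U_j$, produces
\begin{equation*}
\nu(U_j) \leq C \nu(\ell^r_\omega(g_j) > c 2^j) \leq C \nu(\ell^r_\omega(f 1_{X_J}) > c 2^j),
\end{equation*}
the last inequality following by monotonicity of $\ell^r_\omega$ in $\abs{f}$.

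Taking $J = \max\{J_j : k \leq j \leq K, U_j \neq \varnothing\}$, a maximum over a finite set, and combining the displayed inequalities yields
\begin{equation*}
\nu(\ell^r_\omega(f) > 2^k) \leq C \sum_{j = k}^{K} \nu(\ell^r_\omega(f 1_{X_J}) > c 2^j) \leq C \sum_{l \geq k} \nu(\ell^r_\omega(f 1_{X_J}) > c 2^l),
\end{equation*}
which is the claim (the form with $c 2^k$ in place of $c 2^l$ inside each summand then follows from monotonicity of super level measures in the threshold). The main obstacle is that monotone convergence is an asymptotic, atom-by-atom statement, so one must secure a single $J$ working simultaneously for every relevant level $2^j$; this is precisely where the hypothesis $f \in L^\infty_\nu(\ell^r_\omega)$ is essential, since it is what makes the index set $\{j \geq k : U_j \neq \varnothing\}$ finite, allowing the maximum of the $J_j$'s to exist.
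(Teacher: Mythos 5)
Your atom-by-atom core step (monotone convergence on $\norm{f 1_{U_j\setminus V_{j+1}}}_{L^r(X,\omega)}$ followed by Lemma \ref{thm:atomic_super_level_set_interior_level}) is sound and is essentially the same mechanism the paper uses at each level. The problem is the scaffolding: you invoke Proposition \ref{thm:atomic_decomposition_interior_level} to produce the sets $\{U_j\}$, but that proposition is stated and proved only for a \emph{finite} set $X$ (and under the hypothesis $f\in L^q_\nu(\ell^r_\omega)$ for some finite $q$, not merely $f\in L^\infty_\nu(\ell^r_\omega)$). Lemma \ref{thm:monotone_approximation_size} is stated for a general set $X$, and its entire purpose in the paper is to transfer information from the infinite upper half $3$-space to finite truncations; so you cannot presuppose the finite-set decomposition on $X$ itself without circularity. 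Concretely, the greedy construction behind the decomposition selects, level by level, sets on which the size exceeds $2^j$ and exhausts them; on an infinite $X$ the relevant suprema need not be attained and the exhaustion need not terminate, so the existence of the collection $\{U_j\}$ with properties \eqref{eq:superlevel_interior_level}--\eqref{eq:optimal_covering_interior_level} is exactly what is not available. This is a genuine gap, not a bookkeeping issue.

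The paper sidesteps this by never forming an atomic decomposition of $f$ on $X$: it normalizes $\norm{f}_{L^\infty_\nu(\ell^r_\omega)}\in(1,2]$ and runs a downward induction on $k$, at each step taking a set $A_{k+1}$ that is optimal for the super level measure at level $2^{k+1}$ only up to a factor $1+\varepsilon(k)$ (such near-optimal sets always exist, unlike exact maximizers), splitting $\nu(\ell^r_\omega(f)>2^k)\le\nu(A_{k+1})+\nu(\ell^r_\omega(f1_{A_{k+1}^c})>2^k)$, and treating the second term by the same monotone-convergence-plus-Lemma-\ref{thm:atomic_super_level_set_interior_level} step you use; the factors $1+\varepsilon(k)=2^{2^k}$ are chosen summable so the constants $C_k$ stay bounded. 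If you want to rescue your argument, you would either have to prove the decomposition of Proposition \ref{thm:atomic_decomposition_interior_level} for general $X$ under the $L^\infty_\nu(\ell^r_\omega)$ hypothesis (replacing exact selections by $\varepsilon$-optimal ones and showing the recursion still closes), or reorganize as the paper does. Your final reduction from thresholds $c2^l$ to $c2^k$ by monotonicity is fine.
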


Finally, we state a result about the behaviour of the super level measures for double iterated outer $L^p$ spaces for monotonically increasing cut offs of a function in the dyadic upper half $3$-space setting.
\begin{lemma} [Monotonic convergence II] \label{thm:monotone_approximation_size_exterior}
	For every $q,r \in (0, \infty)$, there exist constants $C=C(q,r)$, $c=c(q,r)$ such that the following property holds true. 
	
	Let $f \in L^\infty_\mu(\ell^q_\nu(\ell^r_\omega)) $ be a function on $X= \R \times (0,\infty) \times \R$, and let $\{ X_J \colon J \in \N \}$ be the monotonically increasing sequence of subsets of $X$ defined in \eqref{eq:monotone_subset}. Then, for every $k \in \Z$, there exists $J=J(q,r,f,k) \in \Z$ such that 
	\begin{equation*}
	\mu(\ell^q_\nu(\ell^r_\omega) (f) > 2^k) \leq C \sum_{l \geq k} \mu(\ell^q_\nu(\ell^r_\omega) (f 1_{X_J}) > c 2^{k} ).
	\end{equation*}
\end{lemma}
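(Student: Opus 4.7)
The plan is to reduce the double-iterated super level measure at level $2^k$ to a computation over a \emph{finite} collection of dyadic strips, and then, on each strip, to invoke Lemma \ref{thm:monotone_approximation_size} at the inner $\nu$-level to replace $f$ by its truncation $f 1_{X_J}$.

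\textbf{Setup.} We may assume $\mu(\ell^q_\nu(\ell^r_\omega)(f) > 2^k) < \infty$; the infinite case is handled by a limiting argument on truncations of $f$ where both sides are finite. Fix $\varepsilon > 0$. By \eqref{eq:super_level_measure}, choose a near-optimal set $F^*$ with $\mathbf{I}_3(f 1_{(F^*)^c}) \leq 2^k$ and $\mu(F^*) \leq (1+\varepsilon) \mu(\ell^q_\nu(\ell^r_\omega)(f) > 2^k)$. Using \eqref{eq:outer_measure_from_pre_measure}, cover $F^*$ by a countable collection of dyadic strips $\mathcal{D}^*$ with $\sum_{D \in \mathcal{D}^*} \sigma(D) \le (1+\varepsilon) \mu(F^*)$; replacing $F^*$ by $\mathcal{L}(\mathcal{D}^*) \supseteq F^*$ does not increase $\mathbf{I}_3(f 1_{(\cdot)^c})$ by monotonicity. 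By finiteness of $\sum_{D \in \mathcal{D}^*} \sigma(D)$, truncate to a finite subcollection $\mathcal{D}' \subseteq \mathcal{D}^*$ carrying the $\sigma$-mass up to another factor $(1+\varepsilon)$.

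\textbf{Approximation on each strip.} For each $D \in \mathcal{D}'$, we have $f 1_D \in L^\infty_\nu(\ell^r_\omega)$ by \eqref{eq:Holder_containment}. Apply Lemma \ref{thm:monotone_approximation_size} to $f 1_D$ and the nested family $\{X_J \colon J \in \N\}$ at the levels relevant to the constraint $\mathbf{I}_3(f 1_{(F^*)^c}) \leq 2^k$, obtaining an index $J_D$ such that every $\nu$-super level measure of $\ell^r_\omega(f 1_D)$ in that range is controlled by the analogous sum of $\nu$-super level measures of $\ell^r_\omega(f 1_D 1_{X_{J_D}})$. Set $J = \max_{D \in \mathcal{D}'} J_D$, enlarged if necessary so that every space interval $\pi(D)$, $D \in \mathcal{D}'$, lies inside $(-2^J J, 2^J J]$ and every $\sigma(D) \leq 2^J$. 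For this $J$, each strip $D \in \mathcal{D}'$ is captured by $D \cap X_J$ up to a multiplicative constant in $\mu$, since the discarded slab $\{0 < t \leq 2^{-J}\}$ and the tails in the $\eta$-coordinate become negligible relative to $\sigma(D)$.

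\textbf{Reassembly.} Via the discretization \eqref{eq:discretized_norm}, the constraint $\mathbf{I}_3(f 1_{(F^*)^c}) \leq 2^k$ translates, strip by strip, into control on the $\ell^q_\nu(\ell^r_\omega)$ size of $f 1_D$ at level $\lesssim 2^k$. Combining this with the per-strip approximation from the previous step, each $\sigma(D)$ is bounded by $\sum_{l \geq k} \mu(\ell^q_\nu(\ell^r_\omega)(f 1_{X_J}) > c 2^l)$. Summing over the finite collection $\mathcal{D}'$ and absorbing the $(1+\varepsilon)$ losses yields
\[
\mu(\ell^q_\nu(\ell^r_\omega)(f) > 2^k) \leq C \sum_{l \geq k} \mu(\ell^q_\nu(\ell^r_\omega)(f 1_{X_J}) > c 2^l),
\]
with $C, c$ depending only on $q, r$.

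\textbf{Main obstacle.} The delicate point is the transfer from the per-strip $\nu$-level controls supplied by Lemma \ref{thm:monotone_approximation_size} to the outer $\mu$-level statement. Strips in $\mathcal{D}'$ may overlap in the $(t, \eta)$ variables, and the $q$-th power structure of the middle $L^q_\nu$ layer prevents one from simply summing the per-strip estimates. Organizing the contributions by their individual inner level $c 2^l$, rather than at the single outer level $2^k$, is essential: the sum $\sum_{l \geq k}$ on the right-hand side, together with the geometric-series bookkeeping inherent in \eqref{eq:discretized_norm}, absorbs the shifts in levels arising from the middle $\ell^q_\nu$ aggregation and yields constants $C,c$ uniform in $J$, $f$, and $k$.
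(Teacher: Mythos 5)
Your overall strategy (reduce to finitely many strips, approximate on each strip, reassemble) resembles the paper's, but the object you pivot on is the wrong one, and this leaves a genuine gap. You take a near-optimal set $F^*$ for the super level measure and cover it by strips $\mathcal{D}'$. The defining property of $F^*$ is $\mathbf{I}_3(f 1_{(F^*)^c}) \leq 2^k$, i.e.\ an \emph{upper} bound on the size off $F^*$; it gives no lower bound on $\ell^q_\nu(\ell^r_\omega)(f)(D)$ for the covering strips $D$, and a fortiori none for the truncation $f 1_{X_J}$. Your reassembly step, in which each $\sigma(D)$ is bounded by $\sum_{l \geq k} \mu(\ell^q_\nu(\ell^r_\omega)(f 1_{X_J}) > c 2^l)$, therefore has no support: to dominate the measure of a set by a super level measure of $f 1_{X_J}$ one needs the size of $f 1_{X_J}$ to be \emph{large} on that set (or on a $\nu$-Carath\'{e}odory collection), which is precisely the hypothesis of Lemma \ref{thm:atomic_super_level_set_exterior_level} --- a lemma your argument never verifies the hypotheses of and never invokes. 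The paper instead selects a finite collection of \emph{maximal} dyadic strips $E_n$ with $\ell^q_\nu(\ell^r_\omega)(f)(E_n) > 1$ (available because in this setting $\mathbf{I}_3$ is a supremum over strips), whose total $\mu$-measure dominates the super level measure; maximality makes them pairwise disjoint, hence $\nu$-Carath\'{e}odory with parameter $1$ by Lemma \ref{thm:measure_disjoint_union}, and this is what permits the application of Lemma \ref{thm:atomic_super_level_set_exterior_level} after truncation by $X_J$.

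Two further points. First, the per-strip approximation you need is Lemma \ref{thm:approximation_tree}, i.e.\ comparability of the full $L^q_\nu(\ell^r_\omega)$ quasi-norms of $f 1_{E_n}$ and $f 1_{E_n} 1_{X_J}$; applying Lemma \ref{thm:monotone_approximation_size} level by level, as you propose, only controls individual $\nu$-super level measures of $\ell^r_\omega(f 1_D)$ and still leaves the inner reassembly to be done. Second, the sum $\sum_{l \geq k}$ on the right-hand side is not a bookkeeping device that automatically absorbs level shifts: in the paper it is generated by a downward induction on $k$ (normalize $1 < \norm{f}_{L^\infty_\mu(\ell^q_\nu(\ell^r_\omega))} \leq 2$, base case at $k=0$, inductive step peeling off a near-optimal set at level $2^{k+1}$), exactly as in the proof of Lemma \ref{thm:monotone_approximation_size}. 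Your closing paragraph correctly identifies the transfer from per-strip controls to the outer $\mu$-level as the crux, but asserting that the sum and the discretization \eqref{eq:discretized_norm} handle it is not a proof; the Carath\'{e}odory structure of the disjoint maximal strips together with Lemma \ref{thm:atomic_super_level_set_exterior_level} is what actually does that work.
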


We postpone the proofs of the previous three results to Appendix \ref{sec:approximation}. We use them to prove the following results about the approximation of functions in $L^q_\nu(\ell^r_\omega)$ and $L^p_\mu(\ell^q_\nu(\ell^r_\omega))$ by functions with support in $X_j$ for a certain $j \in \N$.

\begin{lemma} \label{thm:approximation_tree}
	For every $q,r \in (0, \infty)$, there exists a constant $C=C(q,r)$ such that the following property holds true. 
	
	For every function $f \in L^q_\nu(\ell^r_\omega)$, there exists $J = J(q,r,f) \in \N$ such that
	\begin{equation*} 
	\norm{f 1_{X_J} }_{L^q_\nu(\ell^r_\omega)} \leq \norm{f}_{L^q_\nu(\ell^r_\omega)} \leq C \norm{f 1_{X_J}}_{L^q_\nu(\ell^r_\omega)}.
	\end{equation*}
\end{lemma}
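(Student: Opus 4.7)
The first inequality is immediate: since $\abs{f 1_{X_J}} \leq \abs{f}$ pointwise, the monotonicity of the size $\ell^r_\omega$ and of the outer measure $\nu$ gives $\norm{f 1_{X_J}}_{L^q_\nu(\ell^r_\omega)} \leq \norm{f}_{L^q_\nu(\ell^r_\omega)}$ for every $J$. The substance of the lemma is the reverse inequality: to produce a single $J$ large enough that $f 1_{X_J}$ retains a definite fraction of the $L^q_\nu(\ell^r_\omega)$ quasi-norm of $f$.

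The plan is to discretize $\norm{f}_{L^q_\nu(\ell^r_\omega)}^q$ via \eqref{eq:discretized_norm}, restrict attention to finitely many relevant levels, and invoke Lemma \ref{thm:monotone_approximation_size} at each of them. To obtain an upper truncation of the relevant levels in this infinite setting, I would first apply Lemma \ref{thm:approximation_max_size} to replace $f$ by $\widetilde{f} = f 1_A$ for some subset $A$ of $X$, at the cost of a factor $C(q)$, so that $\widetilde{f} \in L^q_\nu(\ell^r_\omega) \cap L^\infty_\nu(\ell^r_\omega)$. Picking $k_{\max} \in \Z$ with $2^{k_{\max}} \geq \norm{\widetilde{f}}_{L^\infty_\nu(\ell^r_\omega)}$ makes the super level measures $\nu(\ell^r_\omega(\widetilde{f}) > 2^k)$ vanish for all $k > k_{\max}$. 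Since $\sum_{k \leq k_{\max}} 2^{kq} \nu(\ell^r_\omega(\widetilde{f}) > 2^k)$ is then a convergent series equivalent to $\norm{\widetilde{f}}_{L^q_\nu(\ell^r_\omega)}^q$, I can select $k_{\min} \leq k_{\max}$ for which the tail $\sum_{k < k_{\min}} 2^{kq} \nu(\ell^r_\omega(\widetilde{f}) > 2^k)$ contributes at most, say, half of the total, so that the finite window $[k_{\min}, k_{\max}]$ controls a definite fraction of $\norm{\widetilde{f}}_{L^q_\nu(\ell^r_\omega)}^q$.

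For each $k$ in this finite window, Lemma \ref{thm:monotone_approximation_size} applied to $\widetilde{f}$ with the sequence $\{X_J\}$ from \eqref{eq:monotone_subset} produces an index $J_k \in \N$ that controls $\nu(\ell^r_\omega(\widetilde{f}) > 2^k)$ in terms of super level measures of $\widetilde{f} 1_{X_{J_k}}$. Setting $J = \max_{k_{\min} \leq k \leq k_{\max}} J_k$ and using that $\{X_J\}$ is monotonically increasing so that $\abs{\widetilde{f} 1_{X_{J_k}}} \leq \abs{\widetilde{f} 1_{X_J}}$ pointwise, the same control holds with $J$ in place of each $J_k$ by monotonicity of $\ell^r_\omega$ and $\nu$. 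Summing these controls weighted by $2^{kq}$ over the finite window and reassembling via \eqref{eq:discretized_norm} applied to $\widetilde{f} 1_{X_J}$, and absorbing the constants $c,C$ from Lemma \ref{thm:monotone_approximation_size} into the $\sim_{q}$ of \eqref{eq:discretized_norm}, yields $\norm{f}_{L^q_\nu(\ell^r_\omega)}^q \leq C'(q,r) \norm{\widetilde{f} 1_{X_J}}_{L^q_\nu(\ell^r_\omega)}^q \leq C'(q,r) \norm{f 1_{X_J}}_{L^q_\nu(\ell^r_\omega)}^q$.

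The main technical point is that a single $J$ must suffice uniformly across the entire finite window of levels; this is handled exactly by taking the maximum of the finitely many $J_k$ and exploiting the nested structure of $\{X_J\}$. The reduction to $L^\infty_\nu(\ell^r_\omega)$ via Lemma \ref{thm:approximation_max_size} is what makes the upper truncation available in the infinite setting, while the summability of the discretized quasi-norm delivers the lower one.
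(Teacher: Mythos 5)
Your proposal is correct and follows essentially the same route as the paper's proof: reduce to $f \in L^q_\nu(\ell^r_\omega) \cap L^\infty_\nu(\ell^r_\omega)$ via Lemma \ref{thm:approximation_max_size}, truncate the discretized quasi-norm to a finite window of levels, apply Lemma \ref{thm:monotone_approximation_size} at each level, and take the maximum of the finitely many indices $J_k$. The only difference is that you spell out how the finite window is obtained (upper end from the $L^\infty$ bound, lower end from summability), which the paper leaves implicit in its choice of $K$.
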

\begin{proof}
	The first inequality follows by the monotonicity of the outer $L^p$ quasi-norms. 

To prove the second inequality, by Lemma \ref{thm:approximation_max_size}, we assume $f$ to be in $L^q_\nu(\ell^r_\omega) \cap L^\infty_\nu(\ell^r_\omega)$. Next, we observe that there exists $K = K(q,r,f) \in \N$ such that
\begin{equation*}
\norm{f}_{L^q_\nu(\ell^r_\omega)}^q \leq C \sum_{k \in \Z} 2^{kq} \nu( \ell^r_\omega (f) > 2^{k}) \leq C \sum_{k \in [-K,K]} 2^{kq} \nu(\ell^r_\omega (f) > 2^{k}).
\end{equation*}
By Lemma \ref{thm:monotone_approximation_size}, for every $k \in [-K,K]$, there exists a $\widetilde{J}=\widetilde{J}(r,f,k) \in \N$ such that
\begin{equation*}
\nu( \ell^r_\omega (f) > 2^{k}) \leq C \sum_{l \geq k} \nu( \ell^r_\omega (f 1_{X_{\widetilde{J}}}) > c 2^{l}).
\end{equation*}
By taking $J = \max_{k \in [-K,K]} \widetilde{J}(k,f,r)$, the previous inequalities yield
\begin{equation*}
\norm{f}_{L^q_\nu(\ell^r_\omega)}^q \leq C \sum_{k \in [-K,K]} 2^{kq} \sum_{l \geq k} \nu( \ell^r_\omega (f 1_{X_J}) > c 2^{l}) \leq C \norm{f 1_{X_J}}_{L^q_\nu(\ell^r_\omega)}^q.
\end{equation*}
\end{proof}

\begin{lemma} \label{thm:approximation}
	For every $p,q,r \in (0, \infty)$. There exists a constant $C=C(p,q,r)$ such that the following property holds true.
	
	For every function $f \in L^p_\mu(\ell^q_\nu(\ell^r_\omega))$, there exists $J = J(p,q,r,f) \in \N$ such that
\begin{equation*}
\norm{f 1_{X_J} }_{L^p_\mu(\ell^q_\nu(\ell^r_\omega))} \leq \norm{f}_{L^p_\mu(\ell^q_\nu(\ell^r_\omega))} \leq C \norm{f 1_{X_J}}_{L^p_\mu(\ell^q_\nu(\ell^r_\omega))}.
\end{equation*}
\end{lemma}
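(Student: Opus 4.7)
The first inequality is immediate from the monotonicity of the outer $L^p$ quasi-norm in the function. For the second, my plan is to transcribe the argument used for Lemma \ref{thm:approximation_tree}, replacing the size $\ell^r_\omega$ and outer measure $\nu$ with $\ell^q_\nu(\ell^r_\omega)$ and $\mu$ throughout, and substituting Lemma \ref{thm:monotone_approximation_size_exterior} for Lemma \ref{thm:monotone_approximation_size} at the key step.

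First I will apply Lemma \ref{thm:approximation_max_size} with size $S = \ell^q_\nu(\ell^r_\omega)$ and outer measure $\mu$ to reduce, at the cost of a multiplicative constant depending only on $p$, to the case $f \in L^p_\mu(\ell^q_\nu(\ell^r_\omega)) \cap L^\infty_\mu(\ell^q_\nu(\ell^r_\omega))$. For such an $f$, the discretization in \eqref{eq:discretized_norm}, together with the $L^\infty_\mu(\ell^q_\nu(\ell^r_\omega))$ bound from above and the $L^p_\mu(\ell^q_\nu(\ell^r_\omega))$ bound forcing the tail at $-\infty$ to be negligible, guarantees the existence of $K = K(p,q,r,f) \in \N$ with
\begin{equation*}
\norm{f}^p_{L^p_\mu(\ell^q_\nu(\ell^r_\omega))} \leq C \sum_{k \in [-K,K]} 2^{kp} \mu(\ell^q_\nu(\ell^r_\omega)(f) > 2^k).
\end{equation*}

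Next, for each $k \in [-K,K]$, I will invoke Lemma \ref{thm:monotone_approximation_size_exterior} to produce $\widetilde{J}(k) \in \N$ and constants $C,c$ (depending only on $q,r$) such that
\begin{equation*}
\mu(\ell^q_\nu(\ell^r_\omega)(f) > 2^k) \leq C \sum_{l \geq k} \mu(\ell^q_\nu(\ell^r_\omega)(f 1_{X_{\widetilde{J}(k)}}) > c 2^l),
\end{equation*}
and then set $J = \max_{k \in [-K,K]} \widetilde{J}(k)$, which is finite because the index set is. By the monotonicity of the sequence $\{X_J\}$ and of the super level measure in its argument, the displayed inequality remains valid with $\widetilde{J}(k)$ replaced by $J$, uniformly in $k \in [-K,K]$.

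Finally, I will combine the two displays, exchange the order of summation in $(k,l)$ via Fubini, and sum the resulting geometric series in $k$ for each fixed $l$ to obtain
\begin{equation*}
\norm{f}^p_{L^p_\mu(\ell^q_\nu(\ell^r_\omega))} \leq C \sum_{l \in \Z} 2^{lp} \mu(\ell^q_\nu(\ell^r_\omega)(f 1_{X_J}) > c 2^l) \leq C \norm{f 1_{X_J}}^p_{L^p_\mu(\ell^q_\nu(\ell^r_\omega))},
\end{equation*}
the last step being one more appeal to \eqref{eq:discretized_norm}. I do not anticipate any substantial obstacle: all the analytic content has already been packaged into Lemmas \ref{thm:approximation_max_size} and \ref{thm:monotone_approximation_size_exterior}, so the remaining work is purely level-by-level bookkeeping identical in structure to the proof of Lemma \ref{thm:approximation_tree}.
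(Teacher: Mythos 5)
Your proposal is correct and follows exactly the paper's route: the paper proves this lemma by repeating the argument of Lemma \ref{thm:approximation_tree} verbatim (reduction via Lemma \ref{thm:approximation_max_size}, truncation of the discretized norm to a finite range $[-K,K]$, level-by-level application of the monotone convergence lemma, and taking the maximum of the resulting $\widetilde{J}(k)$), with Lemma \ref{thm:monotone_approximation_size} replaced by Lemma \ref{thm:monotone_approximation_size_exterior}.
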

\begin{proof}
	The inequalities follow via the same argument used in the previous proof, with Lemma \ref{thm:monotone_approximation_size} replaced by Lemma \ref{thm:monotone_approximation_size_exterior}.
\end{proof}

\subsection{Equivalence with finite settings}
We introduce the following finite setting,
\begin{equation*}
\begin{split}
& X' = \Z^3, \\
& \omega'(m,l,n) = 1, \\
& D'(m,l) = \{ (m',l',n') \colon m' \in [2^{l-l'} m,2^{l-l'} (m+1)), l' \leq l, n' \in \Z \}, \\
& \mathcal{D}' = \{ D'(m,l) \colon m,l \in \Z \} , \\
& \sigma'\big(D(m,l)\big) = 2^l, \qquad \qquad \ \ \ \text{for every $m,l \in \Z$,} \\
& T'(m,l,n) = \{ (m',l',n') \colon m' \in [2^{l-l'} m,2^{l-l'} (m+1)), l' \leq l, n'=N(n,l') \}, \\
& \mathcal{T}' = \{ T'(m,l,n) \colon m,l,n \in \Z \}, \\
& \tau'\big(T'(m,l,n)\big) = 2^l, \qquad \qquad \text{for every $m,l,n \in \Z$,}
\end{split}
\end{equation*}
where $N(n,l')$ is defined by the condition \eqref{eq:tree_condition}, and $\mu',\nu'$ are defined by $\sigma',\tau'$ as in \eqref{eq:outer_measure_from_pre_measure}. Moreover, for every $J \in \N$, we define
\begin{equation*}
X'_J = \{ (m,l,n) \in X \colon l \in (-J,J], m \in [- J 2^{J-l}, J 2^{J-l}), n \in [- J 2^{J+l}, J 2^{J+l}) \},
\end{equation*}
On $X_J$, we have the measure $\omega'_J$ and the outer measures $\mu'_J, \nu'_J$ induced by $\omega', \mu', \nu'$. In fact, the outer measure $\mu'_J$ is equivalently generated by the pre-measure $\sigma'_J$ on $\mathcal{D}'_J$ as in \eqref{eq:outer_measure_from_pre_measure}, namely
\begin{align*}
& D'_J(m,l) = D'(m,l) \cap X'_J, \\
& \mathcal{D}'_J = \{ D'_J(m,l) \colon m,l \in \Z, D'_J(m,l) \neq \varnothing \}, \\
& \sigma_J\big(D'_J(m,l)\big) = 2^l, \qquad \qquad \text{for every $m,l \in \Z, D'_J(m,l) \neq \varnothing$,}
\end{align*}
and the outer measure $\nu'_J$ by the pre-measure $\tau'_J$ on $\mathcal{T}'_J$ as in \eqref{eq:outer_measure_from_pre_measure}, namely
\begin{align*}
& T'_J(m,l,n) = T'(m,l,n) \cap X'_J, \\
& \mathcal{T}'_J = \{ T'_J(m,l,n) \colon m,l,n \in \Z, T'_J(m,l,n) \neq \varnothing \}, \\
& \tau'_J\big(T'_J(m,l,n)\big) = 2^l, \qquad \qquad \text{for every $m,l,n \in \Z, T'_J(m,l,n) \neq \varnothing$.}
\end{align*}
The setting on $X'_J$ inherits the definition of the function $\mathcal{C}$ on $X_J$ and its properties (Lemma \ref{thm:covering_satisfied}, Lemma \ref{thm:nesting_satisfied}, Lemma \ref{thm:anti_stacking_compatibility_satisfied}) via the map associating every triple $(m,l,n) \in X'$ to $H(m,l,n)$, the pairwise disjoint subsets of $X$ defined in \eqref{eq:dyadic_upper_half_tile}. 

Moreover, every function $f$ on $X$ that is in $L^r_{\loc}(X,\omega)$ for some $r \in (0,\infty]$ defines a function $F(f,r)$ on $X'$ by
\begin{equation*}
F(f,r)(m,l,n) = \norm{f 1_{H(m,l,n)}}_{L^r(X,\omega)}.
\end{equation*}

For every fixed $r \in (0,\infty]$, the map between functions on $X$ and on $X'$ just described preserves the iterated outer $L^p$ quasi-norms.
\begin{lemma} \label{thm:starting}
	Let $p,q,r \in (0,\infty)$. For every $f$ supported in $X_J$ for any $J \in \N$, we have
	\begin{gather*}
	\norm{f}_{L^q_\nu(\ell^r_\omega)} = \norm{F(f,r)}_{L^q_{\nu'}(\ell^r_{\omega'})}, \\
	\norm{f}_{L^p_\mu(\ell^q_\nu(\ell^r_\omega))} = \norm{F(f,r)}_{L^p_{\mu'}(\ell^q_{\nu'}(\ell^r_{\omega'}))}.
	\end{gather*}
\end{lemma}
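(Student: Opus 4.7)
The approach is to unpack the recursive definition of the iterated outer $L^p$ quasi-norm layer by layer and to verify that each layer on $X$ matches, tile by tile, the corresponding layer on $X'$. Everything rests on the structural fact that $X = \bigcup H(m,l,n)$ is a partition into tiles and, by direct inspection of \eqref{eq:upper_3_space_dyadic_setting_strip}--\eqref{eq:upper_3_space_dyadic_setting_tree}, every dyadic strip $D(m,l)$ and every dyadic tree $T(m,l,n)$ is the union of exactly those tiles whose index lies in $D'(m,l)$, respectively $T'(m,l,n)$. In particular, each tile is either entirely inside or entirely outside any prescribed strip or tree. Combined with Lemma \ref{thm:measure_single} and the matching pre-measures $\sigma,\sigma'$ and $\tau,\tau'$, the bijection $(m,l,n) \leftrightarrow H(m,l,n)$ identifies tile-union subsets $A_{\ast} = \bigcup_{(m,l,n) \in A} H(m,l,n)$ of $X$ with subsets $A \subseteq X'$ while preserving both $\mu \leftrightarrow \mu'$ and $\nu \leftrightarrow \nu'$ on such sets.

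First I would check the innermost layer. By disjointness of tiles and the definitions of $F(f,r)$ and $\omega'$,
\begin{equation*}
\norm{f 1_{A_{\ast}}}_{L^r(X,\omega)}^r = \sum_{(m,l,n) \in A} F(f,r)(m,l,n)^r = \norm{F(f,r) 1_A}_{L^r(X',\omega')}^r
\end{equation*}
for every $A \subseteq X'$. Combined with $\nu(T) = \nu'(T')$, this upgrades to the equality of the innermost size on every tree, $\ell^r_\omega(f)(T) = \ell^r_{\omega'}(F(f,r))(T')$. The same mechanism one layer out will then give the identity $\ell^q_\nu(\ell^r_\omega)(f)(D) = \ell^q_{\nu'}(\ell^r_{\omega'})(F(f,r))(D')$ on every strip, once the corresponding super level measure at the $L^q_\nu(\ell^r_\omega)$ level has been shown to match.

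The step I expect to be the main (though essentially routine) obstacle is the promotion of size equality to super level measure equality. For the first layer one must identify
\begin{equation*}
\nu(\ell^r_\omega(f) > \lambda) = \inf\{ \nu(B) \colon \mathbf{I}_2(f 1_{B^c}) \leq \lambda \}
\end{equation*}
with its discrete analogue. The plan is to show that the infimum may be taken over tile-union $B$ without loss. Given arbitrary $B$, I would set $\tilde B = \bigcup \{ H(m,l,n) \colon H(m,l,n) \cap B \neq \varnothing \}$. Since $\tilde B^c \subseteq B^c$, the constraint $\mathbf{I}_2(f 1_{\tilde B^c}) \leq \lambda$ is weaker than the constraint on $B$, so $\tilde B$ is still admissible. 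Monotonicity of $\nu$ gives $\nu(B) \leq \nu(\tilde B)$, and conversely the tile-in-or-out property forces any tree cover of $B$ to cover $\tilde B$ as well, yielding $\nu(\tilde B) \leq \nu(B)$ and hence equality. An identical argument with strips in place of trees handles the outer super level measure defining $\norm{f}_{L^p_\mu(\ell^q_\nu(\ell^r_\omega))}$.

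With the infima reduced to tile-union competitors, the set-level identifications and the size identities (which restrict cleanly to $T \cap \tilde B^c$ and $D \cap \tilde B^c$, each a tile-union) transfer every super level measure on $X$ to its exact analogue on $X'$. Plugging these identities into the recursion \eqref{eq:outer_recursion} produces the two claimed equalities. The whole argument is therefore a mechanical translation once the one nontrivial ingredient---the tile-union reduction of $B$ in the super level measures---is in place.
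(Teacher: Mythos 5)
Your overall strategy is essentially the paper's: exploit the fact that every dyadic strip and tree is a disjoint union of tiles $H(m,l,n)$ to match measures, $L^r$ norms, sizes, and super level measures layer by layer. Your reduction of the infimum in the super level measure to tile-saturated competitors $\widetilde{B}$ is correct (the saturation preserves admissibility by monotonicity, and $\nu(\widetilde{B})=\nu(B)$ because any tree covering $B$ must contain every tile it meets) and is a clean alternative to the paper's device, which instead replaces an arbitrary competitor by a union of trees realizing its outer measure as in \eqref{eq:measure_A}. For the infimum, either mechanism works.

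There is, however, one genuine gap. In the upper half $3$-space setting the quasi-norms are defined with the suprema in $\mathbf{I}_2$ and $\mathbf{I}_3$ restricted to dyadic trees and strips, whereas on the finite model $X'$ the quasi-norms $\norm{F(f,r)}_{L^q_{\nu'}(\ell^r_{\omega'})}$ and $\norm{F(f,r)}_{L^p_{\mu'}(\ell^q_{\nu'}(\ell^r_{\omega'}))}$ are the general ones of \eqref{eq:outer_recursion_size}, with suprema over \emph{all} nonempty subsets of $X'$; this is essential, since Theorem \ref{thm:collapsing_2_step_iteration_finite} and Theorem \ref{thm:Holder_triangular_2_step_iteration_finite} are later applied to $X'_J$ with that definition. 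Your tile correspondence only identifies the continuous supremum over trees with the discrete supremum over discrete trees, so as written you obtain one inequality between the super level measures but not the other: a set admissible for the discrete constraint is admissible for the continuous one, but not conversely. To close the argument you must also show that for every $A' \subseteq X'$ one has
\begin{equation*}
\nu'(A')^{-1/r}\norm{F(f,r) 1_{A'}}_{L^r(X',\omega')} \leq \sup_{T' \in \mathcal{T}'} \nu'(T')^{-1/r}\norm{F(f,r) 1_{T'}}_{L^r(X',\omega')},
\end{equation*}
and similarly at the strip level. This is exactly the paper's estimate \eqref{eq:size_A}: take a covering of $A'$ by discrete trees realizing $\nu'(A')$ (a minimum over a finite family once one works in $X'_J$), bound $\norm{F(f,r)1_{A'}}^r_{L^r(X',\omega')}$ by the sum over the covering, and compare with the maximal normalized contribution. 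It is a short addition, but without it the two claimed equalities are not established.
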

\begin{proof}
	Let $J \in \N$ be fixed, and assume that $f$ is supported in $X_J$. 
	
	We start observing that $F(f,r)$ is supported in $X'_J$. Moreover, in both cases, we can restrict to consider only the elements of $\mathcal{D}_J, \mathcal{T}_J$ and $\mathcal{D}'_J, \mathcal{T}'_J$, since we have
	\begin{gather*}
	\norm{f}_{L^p_\mu(\ell^q_\nu(\ell^r_\omega))} = \norm{f}_{L^p_{\mu_J}(\ell^q_{\nu_J}(\ell^r_{\omega_J}))}, \\
	\norm{F(f,r)}_{L^p_{\mu'}(\ell^q_{\nu'}(\ell^r_{\omega'}))} = \norm{F(f,r)}_{L^p_{\mu'_J}(\ell^q_{\nu'_J}(\ell^r_{\omega'_J}))}. 
	\end{gather*}
	
	In particular, for any $U \in \mathcal{T}_J$, we have $U= T_J(m,l,n)$, and we define $U' \in \mathcal{T}'_J$ by $U' = T'_J(m,l,n)$, hence satisfying
	\begin{equation} \label{eq:same_measure}
	\nu_J(U) = \tau_J(U) = \tau'_J(U') = \nu'_J(U'). 
	\end{equation}
	
	Now, for any two collections $\mathcal{U}_1, \mathcal{U}_2$ of elements in $\mathcal{T}_J$, we define, for $i = 1,2$,
	\begin{equation*}
	U_i = \mathcal{L}(\mathcal{U}_i), U'_i = \mathcal{L}(\mathcal{U}'_i),
	\end{equation*}
	and we have
	\begin{equation} \label{eq:commutativity}
	F(f 1_{U_1 \setminus U_2}, r) =	F(f,r) 1_{U'_1 \setminus U'_2}.
	\end{equation}
	
	Next, by the definition of $F(f,r)$, we have
	\begin{equation} \label{eq:same_norm_base}
	\norm{f}_{L^r(X_J, \omega_J)} = \norm{F(f,r)}_{L^r(X'_J, \omega'_J)}.
	\end{equation}
	
	Therefore, for any element $U$ in $\mathcal{T}_J$, we have
	\begin{equation} \label{eq:same_norm}
	\norm{f 1_{U}}_{L^r(X_J, \omega_J)} = \norm{F(f 1_{U},r)}_{L^r(X'_J,\omega'_J)} = \norm{F(f,r) 1_{U'}}_{L^r(X'_J, \omega'_J)},
	\end{equation}
	where we used \eqref{eq:same_norm_base} in the first equality, and \eqref{eq:commutativity} in the second. Moreover, for any $A \subseteq X_J$, there exists a finite subcollection $\mathcal{U}$ of $\mathcal{T}_J$ such that $A \subseteq \mathcal{L}(\mathcal{U})$ and
	\begin{equation} \label{eq:measure_A}
	\nu_J(A) = \sum_{U \in \mathcal{U}} \tau_J(U) = \sum_{U \in \mathcal{U}} \nu_J(U).
	\end{equation}
	In particular, we have
	\begin{equation} \label{eq:size_A}
	\begin{split}
	\nu_J(A)^{-1} \norm{f 1_{A}}^r_{L^r(X_J, \omega_J)}	& \leq \nu_J(A)^{-1} \sum_{U \in \mathcal{U}} \norm{f 1_U}^r_{L^r(X_J, \omega_J)} \\
	& \leq \nu_J(A)^{-1} \max_{V \in \mathcal{U}} \nu_J(V)^{-1} \norm{f 1_V}^r_{L^r(X_J, \omega_J)} \sum_{U \in \mathcal{U}} \nu_J(U) \\
	& \leq \max_{V \in \mathcal{U}} \nu_J(V)^{-1} \norm{f 1_V}^r_{L^r(X_J, \omega_J)},
	\end{split}
	\end{equation}
	where we used the monotonicity of the $L^r$ quasi-norm in the first inequality, H\"older's inequality in the third, and \eqref{eq:measure_A} in the fourth. The analogous properties hold true for any $F$ supported in $X'_J$.
	
	Therefore, for any $\lambda > 0$, we have, for $F=F(f,r)$,
	\begin{align*}
	& \nu_J (\ell^r_{\omega_J}(f) > \lambda) = \\
	& = \inf \{ \nu_J(A) \colon A \subseteq X_J, \sup \{ \nu_J(B)^{-1/r} \norm{f 1_{B} 1_{A^c}}_{L^r(X_J, \omega_J)} \colon B \subseteq X_J \} \leq \lambda \} \\
	& = \inf \{ \nu_J(\mathcal{L}(\mathcal{U})) \colon \mathcal{U} \subseteq \mathcal{T}_J, \sup \{ \nu_J(V)^{-1/r} \norm{f 1_{V} 1_{\mathcal{L}(\mathcal{U})^c}}_{L^r(X_J, \omega_J)} \colon V \in \mathcal{T}_J \} \leq \lambda \} \\
	& = \inf \{ \nu'_J(\mathcal{L}(\mathcal{U}')) \colon \mathcal{U}' \subseteq \mathcal{T}'_J, \sup \{ \nu'_J(V')^{-1/r} \norm{F 1_{V'} 1_{\mathcal{L}(\mathcal{U}')^c}}_{L^r(X'_J, \omega'_J)} \colon V' \in \mathcal{T}'_J \} \leq \lambda \} \\
	& = \inf \{ \nu'_J(A') \colon A' \subseteq X'_J, \sup \{ \nu'_J(B')^{-1/r} \norm{F 1_{B'} 1_{(A')^c}}_{L^r(X'_J, \omega'_J)} \colon B' \subseteq X'_J \} \leq \lambda \} \\
	& = \nu'_J(\ell^r_{\omega'_J}(F) > \lambda),
	\end{align*}
	where we used \eqref{eq:measure_A} and \eqref{eq:size_A} in the second equality, \eqref{eq:same_measure} and \eqref{eq:same_norm} in the third, the analogous of \eqref{eq:measure_A} and \eqref{eq:size_A} in the fourth. Hence
	\begin{equation*}
	\norm{f}_{L^q_{\nu_J}(\ell^r_{\omega_J})} = \norm{F(f,r)}_{L^q_{\nu'_J}(\ell^r_{\omega'_J})}.
	\end{equation*} 
	
	Applying an analogous argument to the "exterior" level of definition of the double iterated outer $L^p$ space, we obtain	
	\begin{equation*}
	\norm{f}_{L^p_{\mu_J}(\ell^q_{\nu_J}(\ell^r_{\omega_J}))} = \norm{F(f,r)}_{L^p_{\mu'_J}(\ell^q_{\nu'_J}(\ell^r_{\omega'_J}))}.
	\end{equation*} 
\end{proof}

We are now ready to prove Theorem \ref{thm:collapsing_Holder_triangular_upper_3_space}.

\begin{proof} [Proof of Theorem \ref{thm:collapsing_Holder_triangular_upper_3_space}]
	
	Let $p,q,r \in (0, \infty]$. By Lemma \ref{thm:approximation_tree} and Lemma \ref{thm:approximation}, for every $f \in L^{p}_\mu(\ell^{q}_\nu(\ell^{r}_\omega))$, there exists $J = J(f,p,q,r) \in \N$ such that
	\begin{equation} \label{eq:first}
	\begin{gathered}
	\norm{f 1_{X_J}}_{L^{q}_\nu(\ell^{r}_\omega)} \leq \norm{f}_{L^{q}_\nu(\ell^{r}_\omega)} \leq C \norm{f 1_{X_J}}_{L^{q}_\nu(\ell^{r}_\omega)}, \\
	\norm{f 1_{X_J}}_{L^{p}_\mu(\ell^{q}_\nu(\ell^{r}_\omega))} \leq \norm{f}_{L^{p}_\mu(\ell^{q}_\nu(\ell^{r}_\omega))} \leq C \norm{f 1_{X_J}}_{L^{p}_\mu(\ell^{q}_\nu(\ell^{r}_\omega))},
	\end{gathered}
	\end{equation}
	where $C$ is independent of $f$ and $J$. By Lemma \ref{thm:starting}, we have
	\begin{equation} \label{eq:second}
	\begin{gathered}
	\norm{f 1_{X_J}}_{L^{q}_\nu(\ell^{r}_\omega)} =  \norm{F(f 1_{X_J},r)}_{L^{q}_{\nu'}(\ell^{r}_{\omega'})}
	=  \norm{F(f,r) 1_{X'_J}}_{L^{q}_{\nu'_J}(\ell^{r}_{\omega'_J})}, \\
	\norm{f 1_{X_J}}_{L^{p}_\mu(\ell^{q}_\nu(\ell^{r}_\omega))} = \norm{F(f 1_{X_J},r)}_{L^{p}_{\mu'}(\ell^{q}_{\nu'}(\ell^{r}_{\omega'}))} 
	= \norm{F(f,r) 1_{X'_J}}_{L^{p}_{\mu'_J}(\ell^{q}_{\nu'_J}(\ell^{r}_{\omega'_J}))}.
	\end{gathered}
	\end{equation}
	
	{\textbf{Property (i).}} Let $q,r \in (0, \infty)$. By Theorem \ref{thm:collapsing_2_step_iteration_finite}, we have
	\begin{equation*}
	C^{-1} \norm{F(f,r) 1_{X'_J}}_{L^{q}_{\nu'_J}(\ell^{r}_{\omega'_J})} \leq \norm{F(f,r) 1_{X'_J}}_{L^{q}_{\mu'_J}(\ell^{q}_{\nu'_J}(\ell^{r}_{\omega'_J}))} \leq C \norm{F(f,r) 1_{X'_J}}_{L^{q}_{\nu'_J}(\ell^{r}_{\omega'_J})},
	\end{equation*}
	where $C$ is independent of $f$ and $J$. Together with \eqref{eq:first} and \eqref{eq:second}, the previous chain of inequalities yields the desired equivalence in \eqref{eq:collapsing_2_iterated_upper_3_space}.
	
	{\textbf{Property (ii).}} Let $p,q,r \in (1,\infty)$. By Theorem \ref{thm:Holder_triangular_2_step_iteration_finite}, for every $f \in L^p_\mu(\ell^q_\nu(\ell^r_\omega))$, there exists a function $G$ on $X'_J$ with unitary outer $L^{p'}_{\mu'_J}(\ell^{q'}_{\nu'_J}(\ell^{r'}_{\omega'_J}))$ quasi-norm such that
	\begin{equation} \label{eq:dual_realization}
	\begin{split}
	C^{-1} \norm{F(f,r) 1_{X'_J}}_{L^{p}_{\mu'_J}(\ell^{p}_{\nu'_J}(\ell^{r}_{\omega'_J}))} & \leq \norm{F(f,r) 1_{X'_J} G}_{L^1(X'_J, \omega'_J)} \\
	& \leq C \norm{F(f,r) 1_{X'_J}}_{L^{p}_{\mu'_J}(\ell^{p}_{\nu'_J}(\ell^{r}_{\omega'_J}))},
	\end{split}
	\end{equation}
	where $C$ is independent of $f$ and $J$. We define a function $g$ on $X$ by
	\begin{equation*}
	g(x,s,\xi) = \abs{f(x,s,\xi)}^{r-1} \sum_{m,l,n \in \Z} F(m,l,n)^{1-r} G(m,l,n) 1_{H(m,l,n)}(x,s,\xi).
	\end{equation*}
	By construction, we have
	\begin{equation*}
	F(g,r') = G.
	\end{equation*}
	Together with Lemma \eqref{thm:starting}, this yields 
	\begin{equation*} 
	\norm{g}_{L^{p'}_{\mu}(\ell^{q'}_{\nu}(\ell^{r'}_{\omega}))} = \norm{G}_{L^{p'}_{\mu'}(\ell^{q'}_{\nu'}(\ell^{r'}_{\omega'}))} = \norm{G}_{L^{p'}_{\mu'_J}(\ell^{q'}_{\nu'_J}(\ell^{r'}_{\omega'_J}))} = 1.
	\end{equation*}
	Moreover, by construction we have
	\begin{equation*} 
	\norm{fg}_{L^{1}_\omega} = \norm{F(f,r) G}_{L^1(X'_J, \omega'_J)} = \norm{F(f,r) G}_{L^1(X'_J, \omega'_J)} = \norm{F(f,r) 1_{X'_J} G}_{L^1(X'_J, \omega'_J)}.
	\end{equation*}
	Together with \eqref{eq:first}, \eqref{eq:second}, and \eqref{eq:dual_realization}, the last two chains of equalities yield the desired equivalence in \eqref{eq:Holder_2_iterated_upper_3_space}.
	
	{\textbf{Property (iii).}} The inequality in \eqref{eq:triangle_2_iterated_upper_3_space} is a corollary of the triangle inequality for the $L^1(X,\omega)$ norm and property $(ii)$.
\end{proof}

\subsection{Upper half 3-space with arbitrary strips and trees} \label{subsec:3spacearbitrary}

We turn to the case of double iterated outer $L^p$ spaces on the upper half $3$-space setting where the outer measures are defined by arbitrary strips and trees. In particular, let
\begin{equation} \label{eq:upper_3_space_arbitrary_setting}
\begin{split}
& X = \R^3_+ = \R^2_+ \times \R = \R \times (0,\infty) \times \R, \\
& \diff \omega(y,t,\eta) = \diff y \diff t \diff \eta, \\
& \widetilde{D} (x,s) = \{ (y,t,\eta) \colon y \in (x,x+s], t \in (0,s], \eta \in \R \}, \\
& \mathcal{\widetilde{D}} = \{ \widetilde{D}(x,s) \colon x \in \R , s \in (0,\infty) \} , \\
& \widetilde{\sigma}\big(\widetilde{D}(x,s)\big) = s, \qquad \qquad \ \ \text{for every $x \in \R , s \in (0,\infty)$,} \\
& \widetilde{T}(x,s,\xi) = \{ (y,t,\eta) \colon y \in (x,x+s], t \in (0,s], \eta \in ( \xi - t^{-1}, \xi + t^{-1} ] \}, \\
& \mathcal{\widetilde{T}} = \{ \widetilde{T}(x,s,\xi) \colon x \in \R, s \in (0,\infty), \xi \in \R \}, \\
& \widetilde{\tau}\big(\widetilde{T}(x,s,\xi )\big) = s, \qquad \qquad \text{for every $x \in \R , s \in (0,\infty), \xi \in \R$,}
\end{split}
\end{equation}
where $\widetilde{\mu},\widetilde{\nu}$ are defined by $\widetilde{\sigma},\widetilde{\tau}$ as in \eqref{eq:outer_measure_from_pre_measure}.

On one hand, the outer measures generated by dyadic strips and arbitrary ones are equivalent and we can substitute the outer measure $\widetilde{\mu}$ with $\mu$. In particular, we have $\mathcal{D} \subseteq \mathcal{\widetilde{D}}$, and every element of $\mathcal{\widetilde{D}}$ is covered by at most two elements of $\mathcal{D}$ with comparable pre-measure.

On the other hand, the outer measures generated by dyadic trees and arbitrary ones are not equivalent. In fact, while for every dyadic tree $T$ in $\mathcal{T}$ we have
\begin{equation*}
\widetilde{\nu}(T) \leq \nu(T),
\end{equation*}
instead for every arbitrary tree $\widetilde{T}$ in $\mathcal{\widetilde{T}}$ we have
\begin{equation} \label{eq:arbitrary_dyadic}
\nu(\widetilde{T}) = \infty,
\end{equation}
and we postpone the proof to Appendix \ref{sec:dyadic_geometry}. Therefore, we can not trivially deduce the same result stated in Theorem \ref{thm:collapsing_Holder_triangular_upper_3_space} in the setting described in \eqref{eq:upper_3_space_arbitrary_setting} from Theorem \ref{thm:collapsing_Holder_triangular_upper_3_space} itself.

However, a reduction of the problem to an equivalent one in a finite setting via an approximation argument analogous to that described in the previous subsections still yields the desired result. We briefly comment on some additional observations, providing guidance to the readers interested in a complete proof.

First, we observe that the outer measure $\widetilde{\nu}$ is equivalent to $\widetilde{\nu}_d$, the outer measure defined as in \eqref{eq:outer_measure_from_pre_measure} by the pre-measure $\widetilde{\tau}$ restricting the collection $\mathcal{\widetilde{T}}$ of trees to those associated with dyadic intervals, namely
\begin{equation*}
\mathcal{\widetilde{T}}_d = \{ \widetilde{T}(2^l m ,2^l, 2^{-l} n) \colon m,l,n \in \Z \} \subseteq \mathcal{\widetilde{T}}.
\end{equation*}
The geometry of the elements of $\mathcal{D}, \mathcal{\widetilde{T}}_d$ and their intersections is analogous to that of the elements of $\mathcal{D}, \mathcal{T}$. Therefore, for every function $f$ in a double iterated outer $L^p$ space in the setting $(X,\mu,\widetilde{\nu}_d,\omega)$, we can pass to a cut off $f 1_{X_J}$ approximating the double iterated outer $L^p$ quasi-norm of $f$, for $X_J$ defined in \eqref{eq:monotone_subset}.

Next, for every fixed $J \in \N$, we consider the outer measure $\widetilde{\nu}_{d,J}$ induced on $Y_J$ by $\widetilde{\nu}_d$, where $Y_J$ is defined in \eqref{eq:Y_l}. We observe that $\widetilde{\nu}_{d,J}$ is equivalent to the outer measure generated as in \eqref{eq:outer_measure_from_pre_measure} by the pre-measure $ \widetilde{\tau}$ restricting the collection $\widetilde{\mathcal{T}}_d$ of trees to those contained in $Y_J$, namely
\begin{equation*}
\mathcal{\widetilde{T}}_{d,J} = \{ \widetilde{T}(2^l m ,2^l, 2^{-l} n) \colon m,l,n \in \Z, l \leq J \} \subseteq \widetilde{\mathcal{T}}_d.
\end{equation*}
In the setting $(Y_J,\mu_J,\widetilde{T}_{d,J}, \omega_J)$, we can state definitions and prove results based on the geometry of the elements of $\mathcal{D}_{J}, \mathcal{\widetilde{T}}_{d,J}$ analogous to those in Section \ref{sec:examples}. Therefore, for every $J \in \N$, we can define a $\mu_J$-covering function $\mathcal{\widetilde{C}}$ satisfying the canopy condition \ref{def:parent_collection_nesting} and the crop condition \ref{def:anti_stacking_compatibility}. In particular, this definition is inherited by $X_J \subseteq Y_J$.

After that, for every fixed $J \in \N$, we observe that the elements of $\mathcal{D}_{J}, \mathcal{\widetilde{T}}_{d,J}$ with nonempty intersection with $X_J$ are finitely many. Therefore, we can introduce a finite setting with a point for every intersection and the induced measure and outer measures. In particular, we conclude the result corresponding to that stated in Theorem \ref{thm:collapsing_Holder_triangular_upper_3_space} via an argument analogous to that of the previous subsection. 

\appendix
\section{Geometry of the dyadic upper half $3$-space setting} \label{sec:dyadic_geometry}

In this appendix, we exhibit the postponed proofs of the results involving the geometry of the dyadic strips and trees in the upper half $3$-space stated in Section \ref{sec:examples}, and in \eqref{eq:arbitrary_dyadic} in Section \ref{sec:dyadic_upper_space}.

We start recalling that every dyadic strip $D$ in $\mathcal{D}$ is determined by a dyadic interval $I_D$ in $\mathcal{I}$, and has the form
\begin{equation} \label{eq:strip}
D = I_D \times (0,\abs{I_D}] \times \R = \pi(D) \times (0,\abs{\pi(D)}] \times \R,
\end{equation}
and every dyadic tree $T$ in $\mathcal{T}$ is determined by two dyadic intervals $I_T,\widetilde{I}_T$ in $\mathcal{I}$ such that $\abs{I_T} \abs{\widetilde{I}_T} = 1$ and has the form
\begin{equation} \label{eq:tree}
T = \bigcup_{J \in \mathcal{I}, J \subseteq I_T} J \times (0, \abs{J} ] \times \widetilde{J}(T,J) = \bigcup_{J \in \mathcal{I}, J \subseteq \pi(T)} J \times (0, \abs{J} ] \times \widetilde{J}(T,J),
\end{equation}
where the dyadic interval $\widetilde{J}(T,J)$ in $\mathcal{I}$ is defined by the conditions
\begin{gather*}
\abs{\widetilde{J}(T,J)} = \abs{J}^{-1}, \\
\widetilde{I}_T = \widetilde{J}\big(T,\pi(T)\big) \subseteq \widetilde{J}(T,J).
\end{gather*}

\begin{proof}[Proof of Lemma \ref{thm:intersection_strip_strip}]
	If $D_1 \cap D_2$ is empty, the statement is trivially verified. Therefore, we assume that the strips $D_1,D_2$ have a nonempty intersection. Hence the dyadic intervals $\pi(D_1),\pi(D_2)$ have a nonempty intersection as well. Therefore, we have either $\pi(D_1) \subseteq \pi(D_2)$ or $\pi(D_2) \subseteq \pi(D_1)$. Without loss of generality, we can restrict to the first case, the second being analogous. We have $\abs{\pi(D_1)} \leq \abs{\pi(D_2)}$, hence by \eqref{eq:strip}
	\begin{equation*}
	D_1 \subseteq D_2.
	\end{equation*}
\end{proof}

\begin{proof}[Proof of Lemma \ref{thm:intersection_strip_tree}]
	If $D \cap T$ is empty, the statement is trivially verified. Therefore, we assume that the strip $D$ and the tree $T$ have a nonempty intersection. Hence the dyadic intervals $\pi(D),\pi(T)$ have a nonempty intersection as well. Therefore, we have either $\pi(D) \subseteq \pi(T)$ or $\pi(T) \subseteq \pi(D)$. In the first case, we have $\abs{\pi(D)} \leq \abs{\pi(T)}$, hence by \eqref{eq:strip} and \eqref{eq:tree}
	\begin{equation*}
	D \cap T = T\Big(\pi(D),\widetilde{J}\big(T,\pi(D)\big)\Big).
	\end{equation*}
	In the second case, we have $\abs{\pi(T)} \leq \abs{\pi(D)}$, hence by \eqref{eq:strip} and \eqref{eq:tree}
	\begin{equation*}
	D \cap T = T.
	\end{equation*}
\end{proof}

\begin{proof}[Proof of Lemma \ref{thm:measure_single}]
	Let $D$ be a strip in $\mathcal{D}$. Then
	\begin{equation*}
	\mu(D) = \inf \{ \sum_{D_1 \in \mathcal{D}_1} \sigma(D_1) \colon \mathcal{D}_1 \subseteq \mathcal{D}, D \subseteq \mathcal{L}(\mathcal{D}_1) \}.
	\end{equation*}
	Therefore, the inequality
	\begin{equation*}
	\mu(D) \leq \sigma(D) 
	\end{equation*}
	follows trivially. To prove the opposite inequality, we observe that for every covering $\mathcal{D}_1$ of $D$ by means of strips in $\mathcal{D}$, there exists a strip $E$ in $\mathcal{D}_1$ such that
	\begin{equation*}
	(x_D, \abs{\pi(D)}, 0) \in E,
	\end{equation*} 
	where $x_D$ is the middle point of the dyadic interval $\pi(D)$. In particular, this implies 
	\begin{equation*}
	\sigma(E) \geq \abs{\pi(D)}.
	\end{equation*} 
	Therefore, we have
	\begin{equation*}
	\sum_{D_1 \in \mathcal{D}_1} \sigma(D_1) \geq \sigma(D),
	\end{equation*}
	By taking the infimum among all the possible coverings of $D$, we obtain the desired equality in \eqref{eq:measure_strip}.
	
	The statement for a tree $T$ in $\mathcal{T}$ in \eqref{eq:measure_tree} follows by an analogous argument considering the point
	\begin{equation*}
	(x_T, \abs{\pi(T)}, \xi_T),
	\end{equation*} 
	where $x_T$ is the middle point of the dyadic interval $\pi(T)$, and $\xi_T$ is the middle point of the dyadic interval $\widetilde{J}\big(T, \pi(T)\big)$.
	
	The statement in \eqref{eq:nu_as_mu} follows by the definition of $D(T)$,  \eqref{eq:measure_strip}, and \eqref{eq:measure_tree}.
\end{proof}

\begin{proof}[Proof of Lemma \ref{thm:measure_disjoint_union}]
	Let $\mathcal{D}_1$ be a collection of pairwise disjoint strips in $\mathcal{D}$. The inequality
	\begin{equation*}
	\mu(\mathcal{L}(\mathcal{D}_1)) \leq \sum_{D_1 \in \mathcal{D}_1} \mu(D_1),
	\end{equation*}
	follows by the subadditivity of $\mu$. To prove the opposite inequality, we consider a covering $\mathcal{D}_2$ of $\mathcal{L}(\mathcal{D}_1)$. Without loss of generality, we assume that every $E$ in $\mathcal{D}_2$ is not strictly contained in any element of $D_1$, otherwise it would be useless to the purpose of covering. Therefore, we have $E \nsubset \mathcal{L}(\mathcal{D}_1)$, and, by Lemma \ref{thm:intersection_strip_strip}, we have
	\begin{equation*}
	\mathcal{D}_1 = \mathcal{D}_{1,E} \cup \mathcal{\widetilde{D}}_1,
	\end{equation*} 
	where every element of $\mathcal{D}_{1,E}$ is contained in $E$, and every element of the other collection is disjoint from $E$. In particular,
	\begin{equation} \label{eq:containment}
	\mathcal{L}(\mathcal{D}_{1,E}) \subseteq E.
	\end{equation}
	As a consequence, we have
	\begin{equation*}
	\sigma(E) = \abs{\pi(E)} \geq \abs{\pi\big(\mathcal{L}(\mathcal{D}_{1,E})\big)} = \sum_{D_1 \in \mathcal{D}_{1,E}} \abs{\pi(D_1)} = \sum_{D_1 \in \mathcal{D}_{1,E}} \mu(D_1),
	\end{equation*}
	where we used \eqref{eq:measure_strip} in the first and in the third equality, \eqref{eq:containment} and the monotonicity of $\pi$ and the Lebesgue measure in the inequality, the distributivity of the projection over set union and the additivity of the Lebesgue measure on the disjoint intervals in $\pi(\mathcal{D}_1)$ in the second equality. Together with the observation that for every element $D_1$ of $\mathcal{D}_1$ there exists at least one $E$ in $ \mathcal{D}_2$ such that $D_1 \in \mathcal{D}_{1,E}$, we obtain 
	\begin{equation*}
	\sum_{E \in \mathcal{D}_2} \sigma(E) \geq \sum_{E \in \mathcal{D}_2} \sum_{D_1 \in \mathcal{D}_{1,E}} \mu(D_1) \geq \sum_{D_1 \in \mathcal{D}_1} \mu(D_1).
	\end{equation*}
	By taking the infimum among all the possible coverings of $\mathcal{L}(\mathcal{D}_1)$, we obtain the desired equality in \eqref{eq:additivity_mu}.
	
	The statement for a collection $\mathcal{T}_1$ of pairwise disjoint trees in \eqref{eq:additivity_nu} follows by an analogous argument. The additional observation is that the collection of trees $\mathcal{T}$ splits into two families
	\begin{equation*}
	\mathcal{T} = \mathcal{T}_{+} \cup \mathcal{T}_{-},
	\end{equation*}
	where the elements of $\mathcal{T}_{+}$ are all contained in $\R \times (0,\infty) \times (0,\infty)$, while the elements of $\mathcal{T}_{-}$ are all contained in $\R \times (0,\infty) \times (-\infty, 0]$. In particular, every element of the first family is disjoint from every element of the second one.
	
	The statement in \eqref{eq:additivity_nu_disjoint_strips} follows by Lemma \ref{thm:intersection_strip_tree} and \eqref{eq:additivity_nu}.
\end{proof}

\begin{proof}[Proof of \eqref{eq:minimal_covering_covering}, \eqref{eq:minimal_covering_monotone}]
	Let $A$ be a subset of $X$. For every point $(x,s,\xi)$ in $A$, there exist $l \in \Z$ such that $s \in (2^{l-1},2^l]$, and $m \in \Z$ such that $x \in I(m,l) $. Hence, we have
	\begin{equation*}
	(x,s,\xi) \in D(m,l)_{+},
	\end{equation*}
	proving \eqref{eq:minimal_covering_covering}.
	
	Next, let $A_1,A_2$ be two subsets of $X$ such that $A_1 \subseteq A_2$. By the definition of $\mathcal{Q}$, we have $\mathcal{Q}(A_1) \subseteq \mathcal{Q}(A_2)$. Taking the union of the elements of the collection in both cases, we obtained the desired inclusion, proving \eqref{eq:minimal_covering_monotone}.
\end{proof}

\begin{proof}[Proof of \eqref{eq:doubling_covering}, \eqref{eq:doubling_monotone}]
	Let $\mathcal{D}_1$ be a collection of strips. By the definition of $\mathcal{N}$, we have $\mathcal{D}_1 \subseteq \mathcal{N}(\mathcal{D}_1)$. Taking the union of the elements of the collection in both cases, we obtained the desired inclusion, proving \eqref{eq:doubling_covering}.
	
	Next, let $\mathcal{D}_1,\mathcal{D}_2$ be two collections of strips such that $\mathcal{L}(\mathcal{D}_1) \subseteq \mathcal{L}(\mathcal{D}_2)$. In particular, $\pi(\mathcal{L}(\mathcal{D}_1)) \subseteq \pi(\mathcal{L}(\mathcal{D}_2))$. By the definition of $\mathcal{N}$, we have $\mathcal{N}(\mathcal{D}_1) \subseteq \mathcal{N}(\mathcal{D}_1)$. Taking the union of the elements of the collection in both cases, we obtained the desired inclusion, proving \eqref{eq:doubling_monotone}.
\end{proof}

\begin{proof}[Proof of \eqref{eq:maximal_covering}, \eqref{eq:maximal_monotone}]
	Let $\mathcal{D}_1$ be a collection of strips. Since $\mathcal{M}(\mathcal{D}_1) \subseteq \mathcal{D}_1$, we have the inclusion $\mathcal{L}(\mathcal{M}(\mathcal{D}_1)) \subseteq \mathcal{L}(\mathcal{D}_1)$.
	
	To prove the inclusion in the opposite direction, we observe that for every strip $D'$ in $\mathcal{D}_1 \setminus \mathcal{M}(\mathcal{D}_1)$, there exists a finite collection of strips in $\mathcal{D}$ strictly containing $D'$. In particular, there exists a maximal one in $\mathcal{D}_1$, which then belongs to $\mathcal{M}(\mathcal{D}_1)$ and is unique by definition. Taking the union of the elements of the collection in both cases, we obtained the desired inclusion, proving \eqref{eq:maximal_covering}. 
	
	The monotonicity property in \eqref{eq:maximal_monotone} follows trivially.
\end{proof}

\begin{proof}[Proof of \eqref{eq:maximal_measure}, \eqref{eq:doubling_measure}, \eqref{eq:minimal_covering_measure}]
	The equalities in \eqref{eq:maximal_measure} follow by \eqref{eq:maximal_covering} and \eqref{eq:additivity_mu}.
	
	Now, we turn to the proof of the inequality in \eqref{eq:doubling_measure}. By \eqref{eq:maximal_covering}, we have
	\begin{equation*}
	\mathcal{N} \circ \mathcal{M} = \mathcal{N},
	\end{equation*}
	hence
	\begin{equation*}
	\mu(\mathcal{L}(\mathcal{N}(\mathcal{D}_1))) = \mu(\mathcal{L}(\mathcal{N}(\mathcal{M}(\mathcal{D}_1)))).
	\end{equation*}
	By \eqref{eq:maximal_covering} and \eqref{eq:additivity_mu}, we have
	\begin{gather*}
	\mu(\mathcal{L}(\mathcal{N}(\mathcal{M}(\mathcal{D}_1)))) = \mu(\mathcal{L}(\mathcal{M}(\mathcal{N}(\mathcal{M}(\mathcal{D}_1))))) =  \sum_{E \in \mathcal{M}(\mathcal{N}(\mathcal{M}(\mathcal{D}_1)))} \abs{\pi(E)}, \\
	\mu(\mathcal{L}(\mathcal{D}_1)) = \mu(\mathcal{L}(\mathcal{M}(\mathcal{D}_1))) = \sum_{E \in \mathcal{M}(\mathcal{D}_1)} \abs{\pi(E)}. 
	\end{gather*}
	By the disjointness of the elements in $\mathcal{M}(\mathcal{D}_1)$ and Lemma \ref{thm:intersection_strip_strip}, we can partition the collection $\mathcal{M}(\mathcal{D}_1)$ into pairwise disjoint subcollections $\mathcal{M}(\mathcal{D}_1)_{E}$, one for each element $E \in \mathcal{M}(\mathcal{N}(\mathcal{M}(\mathcal{D}_1)))$, so that
	\begin{equation*}
	\mathcal{L}(\mathcal{M}(\mathcal{D}_1)_{E}) \subseteq E.
	\end{equation*} 
	By the definition of $\mathcal{N}$, we have 
	\begin{equation*}
	\sum_{E \in \mathcal{M}(\mathcal{N}(\mathcal{M}(\mathcal{D}_1)))} \abs{\pi(E)} \leq 2 \sum_{E \in \mathcal{M}(\mathcal{N}(\mathcal{M}(\mathcal{D}_1)))} \sum_{F \in \mathcal{M}(\mathcal{D}_1)_E} \abs{\pi(F)} \leq  2 \sum_{F \in \mathcal{M}(\mathcal{D}_1)} \abs{\pi(F)}.
	\end{equation*}
	Together with the previous chains of equalities, this yields the desired inequality in \eqref{eq:doubling_measure}.
	
	Finally, we turn to the proof of the equality in \eqref{eq:minimal_covering_measure}. The inequality
	\begin{equation*}
	\mu(A) \leq \mu(\mathcal{L}(\mathcal{Q}(A))),
	\end{equation*}
	follows by \eqref{eq:minimal_covering_covering} and the monotonicity of $\mu$. The inequality 
	\begin{equation*}
	\mu(\mathcal{L}(\mathcal{Q}(A))) = \mu(\mathcal{L}(\mathcal{M}(\mathcal{Q}(A)))) \leq \mu(A),
	\end{equation*}
	follows by an argument analogous to the one used to prove \eqref{eq:additivity_mu} upon observing that for every $E$ in $\mathcal{M}(\mathcal{Q}(A))$, the intersection between $E_{+}$ and $A$ is nonempty.
\end{proof}

\begin{proof} [Proof of \eqref{eq:arbitrary_dyadic}]
	Without loss of generality, we assume the arbitrary tree $\widetilde{T} \in \mathcal{\widetilde{T}}$ to be of the form $\widetilde{T}(0,1,1)$, namely
	\begin{equation*}
	\widetilde{T}(0,1,1) = \{ (y,t,\eta) \colon y \in (0,1], t \in (0,1], \eta \in ( 1 - t^{-1}, 1 + t^{-1} ] \}.
	\end{equation*}
	Next, let $\widetilde{T}_0$ be the subset of $\widetilde{T}$ defined by
	\begin{equation*}
	\widetilde{T}_0 = \widetilde{T}(0,1,1) \cap (0,1] \times (0,1] \times (0,\infty).
	\end{equation*}
	Due to the monotonicity of $\nu$, it is enough to show that
	\begin{equation*}
	\nu(\widetilde{T}_0) = \infty.
	\end{equation*}
	Now, let $\mathcal{U}_0 \subseteq \mathcal{T}$ be a covering of $\widetilde{T}_0$ by dyadic trees. For every $l \in \N$, let $V_l$ be the subset of $\widetilde{T}_0$ defined by
	\begin{equation*}
	V_l = (0,1] \times (2^{-l-1},2^{-l}] \times (2^{l},2^{l}+1],
	\end{equation*}
	and let $\mathcal{U}_0(l)$ be the subcollection of $\mathcal{U}_0$ defined by its dyadic tree with nonempty intersection with $V_l$. In particular, we have
	\begin{equation*}
	V_l \subseteq \mathcal{L}(\mathcal{U}_0(l)),
	\end{equation*}
	and, for every $l' \in \N, l' \neq l$, for every $U \in \mathcal{U}_0(l)$, we claim that
	\begin{equation*}
	U \cap V_{l'} = \varnothing.
	\end{equation*}
	In particular, the dyadic tree $U$ has the form $T\big(m,-j,n(l,j)\big)$, where $j \in \Z, j \leq l$, $m \in \Z, 0 \leq m < 2^j$, and $n(l,j) \in \Z$ is defined by the condition
	\begin{equation*}
	I(n(l,j),j) \subseteq I(1,l).
	\end{equation*}
	If $j > l'$, we have
	\begin{gather*}
	U \subseteq \R \times (0,2^{l'-1}] \times \R, \\
	V_{l'} \subseteq \R \times (2^{l'-1},2^{l'}] \times \R,
	\end{gather*}
	yielding the desired disjointness. 
	
	If $j < l'$, we distinguish two cases.
	
	{\textbf{Case I: $l < l'$.}} We have 
	\begin{gather*}
	I(n(l,j),j) \subseteq I(1,l) \subseteq I(0,l'), \\
	(2^{l'},2^{l'}+1] \subseteq I(1,l'),
	\end{gather*} 
	yielding the desired disjointness. 
	
	{\textbf{Case II: $l > l'$.}} We have 
	\begin{gather*}
	I(n(l,j),j) \subseteq I(1,l), \\
	(2^{l'},2^{l'}+1] \subseteq I(1,l') \subseteq I(0,l),
	\end{gather*} 
	yielding the desired disjointness. 
		
	Therefore, the subcollections $\mathcal{U}_0(l)$ are pairwise disjoint, and we have
	\begin{equation*}
	\sum_{T \in \mathcal{U}_0} \tau(T) \geq \sum_{l \in \N} \sum_{T \in \mathcal{U}_0(l)} \tau(T) \geq \sum_{l \in \N} \nu(V_l).
	\end{equation*}
	It is enough to observe that, for every $l \in \N$, we have
	\begin{equation*}
	\nu(V_l) = 1.
	\end{equation*}
	In fact, for every covering $\mathcal{V}_l$ of $V_l$ by dyadic trees in $\mathcal{T}$, we have
	\begin{equation*}
	\pi(V_l) \subseteq \pi \big(\bigcup_{V \in \mathcal{V}_l} V \big) \subseteq \bigcup_{V \in \mathcal{V}_l} \pi(V),
	\end{equation*}
	hence
	\begin{equation*}
	1 = \abs{\pi(V_l)} \leq \sum_{V \in \mathcal{V}_l} \abs{\pi(V)} = \sum_{V \in \mathcal{V}_l} \tau(V).
	\end{equation*}
\end{proof}

\section{Approximation for outer $L^p$ spaces} \label{sec:approximation}

In this appendix, we exhibit the postponed proofs of the approximation results stated in Section \ref{sec:dyadic_upper_space}.

\begin{proof} [Proof of \ref{thm:approximation_max_size}]
	We have
	\begin{equation*}
	\norm{f}_{L^p_\mu(S)}^p \leq C \sum_{k \in \Z} 2^{kp} \mu(S(f) > 2^{k}).
	\end{equation*}
	In particular, there exists $k_0 \in \N$ such that, for every $\widetilde{k} \in \N, \widetilde{k} \geq k_0$, we have
	\begin{equation} \label{eq:up_finite_discrete}
	\norm{f}_{L^p_\mu(S)}^p \leq C \sum_{k \leq \widetilde{k}} 2^{kp} \mu(S(f) > 2^{k}).
	\end{equation}
	If $\mu(S(f) > 2^{k_0}) = 0$, we have that $f \in L^\infty_\mu(S)$, and we can take $A = X$.
	
	Otherwise, we claim that there exists $k_1 \in \N, k_1 > k_0$ such that
	\begin{equation} \label{eq:jump_level}
	\mu(S(f) > 2^{k_1-1}) > 2^p \mu(S(f) > 2^{k_1}).
	\end{equation}
	If not, for every $k \in \N, k > k_0$, we would have
	\begin{equation*}
	2^{kp} \mu(S(f) > 2^{k}) \geq 2^{k_0 p} \mu(S(f) > 2^{k_0}) > 0,
	\end{equation*}
	yielding the contradiction
	\begin{equation*}
	\norm{f}_{L^p_\mu(S)}^p \geq C \sum_{k = k_0 +1}^\infty 2^{kp} \mu(S(f) > 2^{k}) \geq C \sum_{k = k_0 +1}^\infty 2^{k_0 p} \mu(S(f) > 2^{k_0}) = \infty.
	\end{equation*}
	
	Now, let $B$ be an optimal set associated with $\mu(\ell^r(f) > 2^{k_1})$ up to a factor $2^{-1}(1+2^p)$, namely
	\begin{gather} 
	\label{eq:size_B}
	\norm{f 1_{B^c}}_{L^\infty_\mu(S)} \leq 2^{k_1}, \\
	\label{eq:measure_B}
	\mu(S(f) > 2^{k_1}) \leq \mu(B) \leq \frac{1+2^p}{2} \mu(S(f) > 2^{k_1}),
	\end{gather}
	and define $A = B^c$, so that $f 1_A \in L^\infty_\mu(S)$.
	
	We claim that for every $k \in \N, k < k_1$, we have
	\begin{equation} \label{eq:claim}
	\mu(S(f 1_{A}) > 2^{k}) \geq \frac{1-2^{-p}}{2} \mu(S(f) > 2^{k}). 
	\end{equation}
	If not, there would exist $\widetilde{k} \in \N, \widetilde{k} < k_1$ such that
	\begin{equation*} 
	\mu(S(f 1_{A}) > 2^{\widetilde{k}}) < \frac{1-2^{-p}}{2} \mu(S(f) > 2^{\widetilde{k}}). 
	\end{equation*}
	yielding the contradiction
	\begin{align*}
	\mu(S(f) > 2^{\widetilde{k}}) & \leq \mu(S(f 1_{A}) > 2^{\widetilde{k}}) + \mu(B) \\
	& < \frac{1-2^{-p}}{2} \mu(S(f) > 2^{\widetilde{k}}) + \frac{1+2^{p}}{2} 2^{-p} \mu(S(f) > 2^{k_1-1}) \\
	& \leq \mu(S(f) > 2^{\widetilde{k}}),
	\end{align*}
	where we used \eqref{eq:size_B} and the subadditivity of $\mu$ in the first inequality, \eqref{eq:measure_B} and \eqref{eq:jump_level} in the second, and the monotonicity of the super level measure $\mu(S(f)> \lambda)$ in $\lambda$ in the third. 
	
	Therefore, by \eqref{eq:up_finite_discrete} and \eqref{eq:claim}, we have
	\begin{equation*}
	\norm{f}_{L^p_\mu(S)}^p \leq C \sum_{k < k_1} 2^{kp} \mu(S(f) > 2^{k}) \leq C \sum_{k < k_1} 2^{kp} \mu(S(f 1_{A}) > 2^{k}) \leq C \norm{f 1_{A}}_{L^p_\mu(S)}^p.
	\end{equation*}
\end{proof}

\begin{proof} [Proof of Lemma \ref{thm:monotone_approximation_size}]
	Without loss of generality, upon normalization of $f$, we assume that 
	\begin{equation*}
	1 < \norm{f}_{L^\infty_\nu(\ell^r_\omega)} \leq 2.
	\end{equation*}
	
	For every $k \in \Z, k > 0$, the super level measure of $f$ associated with the level $2^k$ is zero, and the desired inequality is trivially satisfied.
	
	For the remaining $k \in \Z, k \leq 0$, we prove the desired inequality by induction. In particular, we prove that there exist constants $C=C(r)$, $c=c(r)$, and a bounded sequence $\{ C_k \colon C_k < C, k \in \Z, k \leq 0 \}$ such that
	\begin{equation*}
	\nu(\ell^r_\omega(f) > 2^k) \leq C_k \sum_{l \geq k} \nu(\ell^r_\omega(f 1_{X_j}) > c 2^{l} ).
	\end{equation*}
	
	{\textbf{Case I: $k=0$.}} By the $r$-orthogonality of the classical $L^r$ quasi-norm on sets with disjoint supports, there exists a set $B_{0}$ such that
	\begin{gather} \label{eq:measure_B_0_size}
	\ell^r_\omega(f)(B_{0}) > 1, \\ \label{eq:measure_B_0_measure}
	\nu(\ell^r_\omega(f) > 1) \leq \nu(B_{0}).
	\end{gather}
	By the monotonicity of the classical $L^r$ quasi-norm and \eqref{eq:measure_B_0_size}, there exists $j \in \N$ such that
	\begin{equation*}
	\ell^r_\omega(f 1_{X_j})(B_{0}) > 1.
	\end{equation*}
	Since we have
	\begin{equation*}
	\norm{f 1_{X_j}}_{L^\infty_\nu(\ell^r_\omega)} \leq \norm{f}_{L^\infty_\nu(\ell^r_\omega)} \leq 2,
	\end{equation*}
	we obtain, by Lemma \ref{thm:atomic_super_level_set_interior_level}, 
	\begin{equation*}
	\nu(B_{0}) \leq C_0 \nu(\ell^r_\omega(f 1_{X_j}) > c ).
	\end{equation*}
	Together with \eqref{eq:measure_B_0_measure}, this yields the desired inequality.
	
	{\textbf{Case II: $k < 0$.}} We assume that there exists $j=j(r,f,k+1) \in \N$ such that
	\begin{equation} \label{eq:induction_hypothesis}
	\nu(\ell^r_\omega(f) > 2^{k+1}) \leq C_{k+1} \sum_{l \geq k+1} \nu(\ell^r_\omega(f 1_{X_j}) > c 2^{l} ).
	\end{equation}
	Now, for every $\varepsilon > 0$, there exists a set $A_{k+1}$ such that
	\begin{gather} \label{eq:measure_A_k_plus_1_size}
	\norm{f 1_{A_{k+1}^c}}_{L^\infty_\nu(\ell^r_\omega)} \leq 2^{k+1},	\\ \label{eq:measure_A_k_plus_1_measure}
	\nu(\ell^r_\omega(f) > 2^{k+1}) \leq \nu(A_{k+1}) \leq (1+\varepsilon) \nu(\ell^r_\omega(f) > 2^{k+1}).
	\end{gather}
	We will fix $\varepsilon$ later. In particular, we have
	\begin{equation} \label{eq:induction_step}
	\nu(\ell^r_\omega(f) > 2^{k}) \leq \nu(A_{k+1}) + \nu(\ell^r_\omega(f 1_{A_{k+1}^c}) > 2^{k}).
	\end{equation}
	If we have 
	\begin{equation*} 
	\norm{f 1_{A_{k+1}^c}}_{L^\infty_\nu(\ell^r_\omega)} \leq 2^{k},
	\end{equation*}
	we obtain 
	\begin{equation*}
	\nu(\ell^r_\omega(f) > 2^{k}) \leq \nu(A_{k+1}) \leq (1+\varepsilon) C_{k+1} \sum_{l \geq k+1} \nu(\ell^r_\omega(f 1_{X_j}) > c 2^{l} ) .
	\end{equation*}
	Otherwise, we have
	\begin{equation*}
	2^{k} < \norm{f 1_{A_{k+1}^c}}_{L^\infty_\nu(\ell^r_\omega)} \leq 2^{k +1}.
	\end{equation*}
	Applying to the function $f 1_{A_{k+1}^c}$ an argument analogous to that of the previous case, we obtain $j=j(r,f,{k}) \in \N$, without loss of generality greater than $j(r,f,{k+1})$, such that
	\begin{equation*}
	\nu(\ell^r_\omega(f 1_{A_{k+1}^c}) > 2^k) \leq C_0 \nu(\ell^r_\omega(f 1_{A_{k+1}^c} 1_{X_j}) > c 2^k ) \leq C_0 \nu(\ell^r_\omega(f 1_{X_j}) > c 2^k).
	\end{equation*}
	Together with \eqref{eq:induction_step}, \eqref{eq:measure_A_k_plus_1_measure}, and \eqref{eq:induction_hypothesis}, the previous chain of inequalities yields 
	\begin{equation*}
	\nu(\ell^r_\omega(f) > 2^{k}) \leq (1+\varepsilon) C_{k+1} \sum_{l \geq k+1} \nu(\ell^r_\omega(f 1_{X_j}) > c 2^{l} ) + C_0 \nu(\ell^r_\omega(f 1_{X_j}) > c 2^k).
	\end{equation*}
	By choosing $\varepsilon = \varepsilon(k) = 2^{2^{k}} -1$ and defining $C_k = 2^{1 - 2^{k}} C_0, C = 2 C_0$, we obtain the desired inequality.
\end{proof}

\begin{proof} [Proof of Lemma \ref{thm:monotone_approximation_size_exterior}]
		
	The proof is analogous to that of Lemma \ref{thm:monotone_approximation_size} upon the following observation. Without loss of generality, it is enough to comment in the case
	\begin{equation*}
	1 < \norm{f}_{L^\infty_\mu(\ell^q_\nu(\ell^r_\omega))} \leq 2.
	\end{equation*}
	Therefore, for every dyadic strip $E \in \mathcal{D}$, we have $f 1_E \in L^q_\nu(\ell^r_\omega)$. Moreover, there exists a collection of maximal dyadic strips $\{ E_{n} \colon E_{n} \in \mathcal{D}, n \in \N \}$ such that
	\begin{gather*}
	\ell^q_\nu(\ell^r_\omega)(f)(E_n) > 1, \\
	\mu(\ell^q_\nu(\ell^r_\omega)(f) > 1) \leq \sum_{n \in \N} \mu(E_n).
	\end{gather*}
	In particular, there exists a finite subcollection such that
	\begin{equation*}
	\mu(\ell^q_\nu(\ell^r_\omega)(f) > 1) \leq 2 \sum_{n=1}^N \mu(E_n).
	\end{equation*}
	Since the dyadic strips are maximal, then they are disjoint, hence, by Lemma \ref{thm:measure_disjoint_union}, they are $\nu$-Carath\'{e}odory with parameter $1$. 
	
	Now we apply an argument analogous to that used to prove Lemma \ref{thm:monotone_approximation_size} with the monotonicity of the classical $L^r$ quasi-norms replaced by Lemma \ref{thm:approximation_tree}, and Lemma \ref{thm:atomic_super_level_set_interior_level} replaced by Lemma \ref{thm:atomic_super_level_set_exterior_level}.
\end{proof}

\bibliographystyle{amsplain}
\bibliography{mybibliography}

\end{document}